\let\cl@chapter\relax \makeatother
\newtheorem{theorem}{Theorem}
\newtheorem{proposition}[theorem]{Proposition}
\newtheorem{corollary}[theorem]{Corollary}
\newtheorem{lemma}[theorem]{Lemma}
\theoremstyle{definition}
\newtheorem{definition}[theorem]{Definition}
\theoremstyle{remark}
\newtheorem*{remark}{Remark}
\theoremstyle{remark}
\newtheorem*{claim}{Claim}
\newenvironment{claimproof}[1]{\par\noindent\underline{Proof:}\space#1}{\hfill $\blacksquare$}
\begin{document}



\allowdisplaybreaks

\title{Outer-Product-Free Sets for Polynomial Optimization and Oracle-Based Cuts}
\author{Daniel Bienstock\footnote{dano@columbia.edu, IEOR, Columbia University, New York, NY, USA} , Chen Chen\footnote{chen.8018@osu.edu, ISE, The Ohio State University, Columbus, OH, USA} , Gonzalo Mu\~{n}oz\footnote{gonzalo.munoz@polymtl.ca, Institute of Engineering Sciences, Universidad de O'Higgins, Rancagua, Chile; Conicyt BCH 72130388}\\}

\maketitle

\begin{abstract}
This paper introduces cutting planes that involve minimal structural assumptions, enabling the generation of strong polyhedral relaxations for a broad class of problems. We consider valid inequalities for the set $S\cap P$, where $S$ is a closed set, and $P$ is a polyhedron. Given an oracle that provides the distance from a point to $S$, we construct a pure cutting plane algorithm which is shown to converge if the initial relaxation is a polyhedron. These cuts are generated from convex forbidden zones, or $S$-free sets, derived from the oracle. We also consider the special case of polynomial optimization.  Accordingly we develop a theory of \emph{outer-product-free} sets, where $S$ is the set of real, symmetric matrices of the form $xx^T$. All maximal outer-product-free sets of full dimension are shown to be convex cones and we identify several families of such sets. These families are used to generate strengthened intersection cuts that can separate any infeasible extreme point of a linear programming relaxation efficiently. 
Computational experiments demonstrate the promise of our approach.

\end{abstract}

\section{Introduction}
Consider a generic mathematical program of the following form
\begin{alignat*}{2}
\min \ & c^Tx\\
\text{subject to } & x \in S \cap P.
\end{alignat*}

Here $P:=\{x\in \mathbb{R}^n|Ax\leq b\}$ is a polyhedral set, $c \in \mathbb{R}^n$ is a given cost vector, and $S \subset \mathbb{R}^n$ is a closed set. The natural linear programming (LP) relaxation $\min \{ c^Tx \, |\, x \in P \}$ provides a computationally tractable lower bound to the original problem's optimal objective value.  However, $P$ may be a poor outer-approximation or relaxation of the true feasible region $S \cap P$. This paper concerns the generation of stronger polyhedral relaxations via cutting-plane algorithm, i.e. the dynamic generation of cuts or valid (i.e. not removing points in $S \cap P$) linear inequalities to produce a sequence of (say, $k$) tighter relaxations: $P \supset P_1 \supset ... \supset P_k \supseteq S\cap P$. This cutting plane approach is crucial to branch-and-cut methods (e.g. \cite{padberg1991branch,belotti2013mixed,tawarmalani2005polyhedral,audet2000branch,misener2015dynamically}) for global optimization, and may be used to augment convex relaxations in general. 

There are many ways to generate cuts such as: disjunctions \cite{balas1998disjunctive}, lift-and-project \cite{lovasz1991cones}, algebraic arguments (e.g. \cite{gomory1972some,gomory1958outline,marchand2001aggregation,atamturk2010conic}), combinatorics (see \cite{wolsey2014integer}), and convex outer-approximation (e.g. \cite{kelley1960cutting}). We adopt the geometric perspective, in which cuts are derived from convex forbidden zones, or $S$-free sets. The convexity requirement on $S$-free sets is essential in standard intersection cuts \cite{balas1971intersection}, although nonconvex sets can be exploited in special cases (e.g. \cite{li2008cook}).   

The $S$-free approach was developed in the context of mixed-integer programming; we shall consider a different setting involving minimal structural assumptions on $S$.  Suppose there is an oracle that provides the distance from a point to $S$. For instance, such distance can be approximated to arbitrary accuracy in polynomial time in the case of integer variables (using rounding operations), polynomial optimization (using eigenvalues, see \Cref{sec:ballcut}), and cardinality constraints (see \Cref{subsec:card}). In \Cref{thm:convtheorem} we establish that, given the initial relaxation $P$ is a polytope, the oracle (or an arbitrarily close approximation thereof) enables a finite-time cutting plane algorithm that constructs a polyhedron arbitrarily close to $\mbox{conv}(S\cap P)$. Hence an explicit functional characterization of $S$ is not necessary to produce a strong relaxation. 

Additionally, we consider the case when $S$-free sets are used to derive an \emph{intersection cut}. Such a cut improves the relaxation in polynomial time by using a (weaker) basic relaxation of $P$. We develop a polynomial-time strengthening procedure for generic intersection cuts (see \Cref{sec:strengthen}) that exploits the recession cone of an $S$-free set. 

We also focus on the special case of polynomial optimization:
\begin{alignat*}{2}
\min\ &  p_0(x)  \nonumber\\
(\mathbf{PO})\ \ \text{s.t. }&p_i(x) \leq 0 &\quad \ i=1,...,m,
\end{alignat*}
where each $p_i$ is a polynomial function with respect to the decision vector $x \in \mathbb{R}^n$. Polynomial optimization generalizes important classes of problems such as quadratic programming, and has numerous applications in engineering.

 $\mathbf{PO}$ can be treated as a special case of mixed-integer nonlinear programming (MINLP). MINLP cuts are typically generated for a single nonlinear term or function (e.g. \cite{misener2012global,luedtke2012some,bao2009multiterm,rikun1997convex,tardella2008existence,locatelli2013convex,tawarmalani2002convex,tawarmalani2013explicit,serrano2019intersection}) over a simple subset of linear constraints such as box constraints. 
 In contrast, we develop general-purpose cuts that account for global nonconvexity imposed by $S$ (i.e. potentially addressing several nonlinear functions at once). To the best of our knowledge there are two papers (applicable to polynomial optimization) that are similar to our work in this regard: the disjunctive cuts of Saxena, Bonami, and Lee \cite{saxena2010convex,saxena2011convex}, which apply to bounded mixed-integer programming problems with nonconvex quadratic constraints (MIQCP);
 and the lift-and-project method by Ghaddar, Vera, and Anjos \cite{ghaddar2011dynamic}, where cuts for a given moment relaxation are generated using a higher-moment relaxation.  Polynomial-time separation for these procedures is not guaranteed in general.

We work with a representation of $\mathbf{PO}$ that uses a symmetric matrix of decision variables, and let $S$ be the set of symmetric matrices that can be represented as a real, symmetric outer product $xx^T$ ---accordingly we study \emph{outer-product-free} sets and the intersection cuts that can be derived from them. We first derive a simple oracle-based outer-product-free set in \Cref{sec:ellipsoids}. Subsequently, we identify several families of \emph{maximal} outer-product-free sets in \Cref{thm:psdhalf,thm:22thm}; such families are sufficient to characterize all such (full-dimensional) sets in the space of $2\times 2$ symmetric real matrices. With the aforementioned results we develop a cut generation procedure (see \Cref{sec:polycuts}) that separates an infeasible extreme point of a (lifted) polyhedral relaxation of $\mathbf{PO}$ in polynomial time without relying on variable bounds.

We demonstrate the practical effectiveness of our approach over a variety of instances using a straightforward pure cutting-plane setup. Comparisons are made with semidefinite programming relaxations, as well as the cuts of Saxena, Bonami, and Lee. The speed of our separation routines and the quality of the resulting linear programming relaxations strongly suggest the viability of our cut families within a full-fledged branch-and-cut solver.\\

The remainder of the paper is organized as follows.  \Cref{sec:intcutsold} describes $S$-free sets and develops oracle-based cuts. \Cref{sec:intcutsnew} describes the standard intersection cut, and our cut strengthening procedure. \Cref{sec:poly} studies outer-product-free sets. \Cref{sec:polycuts} describes cut generation using outer-product-free sets. \Cref{sec:exp} provides numerical examples and detailed computational experiments. \Cref{sec:conc} concludes.

\subsection{Notation}
Denote the interior of a set $\mbox{int}(\cdot)$, its boundary $\mbox{bd}(\cdot)$, its closure $\mbox{cl}(\cdot)$, and its recession cone $\mbox{rec}(\cdot)$.  The convex hull of a set is denoted $\mbox{conv}(\cdot)$, and its closure is $\mbox{clconv}(\cdot)$; likewise, the conic hull of a set is $\mbox{cone}(\cdot)$, and its closure $\mbox{clcone}(\cdot)$. The set of extreme points of a convex set is $\mbox{ext}(\cdot)$. For a point $x$ and nonempty set $S$ in $\mathbb{R}^n$, we define $d(x,S):=\inf_{s\in S}\{\|x-s\|_2\}$; note that for $S$ closed we can replace the infimum with minimum.  Denote the ball with center $x$ and radius $r$ to be $\mathcal{B}(x,r)$. The $i$th row of a matrix $A$ is $a_{i,*}$, and the $j$th column is $a_{*,j}$. For a square matrix $X$, $X_{[i,j]}$ denotes the $2\times 2$ principal submatrix induced by indices $i\neq j$. The $2\times 2$ submatrix of $X$ induced by rows $i_1 \neq i_2$ and columns $j_1\neq j_2$ is denoted $X_{[[i_1, i_2], [j_1, j_2]]}$. $\langle\cdot,\cdot\rangle$ denotes the matrix inner product and $\|\cdot \|_F$ the Frobenius norm. A positive semidefinite matrix may be referred to as a PSD matrix for short, and likewise NSD refers to negative semidefinite.

\section{$S$-free Sets and Oracle-Based Cuts}
\label{sec:intcutsold} 
\begin{definition}
A set $C\subset \mathbb{R}^n$ is $S$-\emph{free} if $\mbox{int}(C)\cap S =\emptyset$ and $C$ is convex. 
\end{definition}

For any $S$-free set $C$ we have $S \cap P \subseteq \mbox{clconv}(P\setminus \mbox{int}(C))$, and so any valid inequalities for $\mbox{clconv}(P\setminus \mbox{int}(C))$ are valid for $S \cap P$. See \Cref{fig:oracleballcut} for a diagram. Hillestad and Jacobsen \cite{hillestad1980reverse}, and later on Sen and Sherali \cite{sen1987nondifferentiable}, provide results regarding the polyhedrality of $\mbox{clconv}(P\setminus \mbox{int}(C))$. Averkov \cite{averkov2011finite} provides theoretical consideration on how one can derive cuts from $C$. In specific instances, $\mbox{conv}(P\setminus \mbox{int}(C))$ can be fully described; for example, Bienstock and Michalka \cite{bienstock2014cutting} provide a characterization of the convex hull when $S$ is given by the epigraph of a convex function excluding a polyhedral or ellipsoidal region (also see \cite{modaresi2015intersection,Belotti2,kilincc2014minimal}).

\begin{figure}
    \centering
    \begin{overpic}[scale=0.25,trim={5cm, 5cm, 5cm, 2.5cm},clip,page=3]{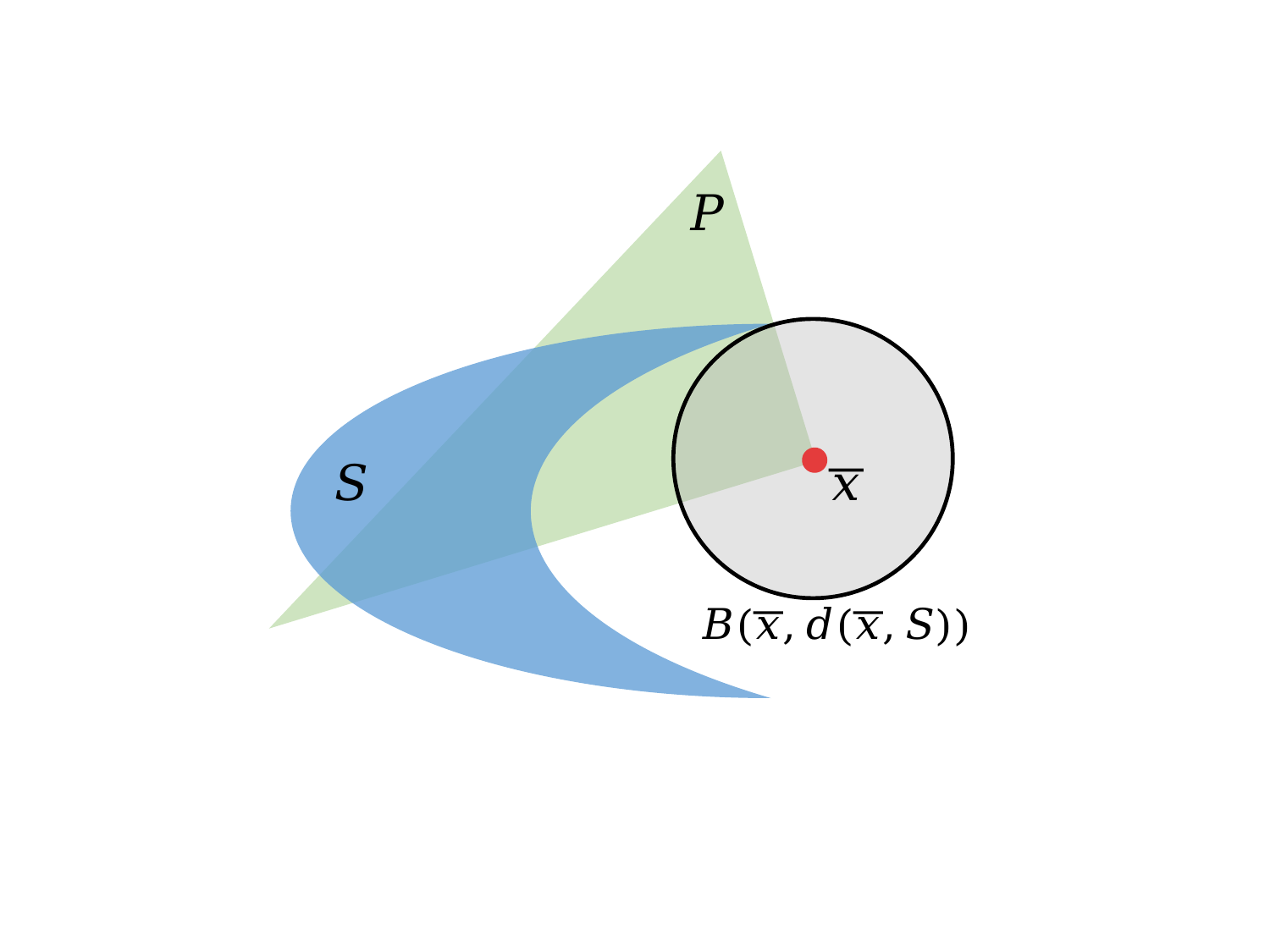}
    \put (55,60) {$P$}
    \put (10,25) {$S$}
    \put (75,25) {$\bar{x}$}
    \put (73,6) {$\mathcal{B}(\bar{x},d(\bar{x},S))$}
    \end{overpic}
    \hspace*{1cm}
    \begin{overpic}[scale=0.25,trim={5cm, 5cm, 5cm, 2.5cm},clip,page=4]{Simpledrawings.pdf}
     \put (15,60) {$P\setminus \mathcal{B}(\bar{x},d(\bar{x},S))$}
    \put (10,25) {$S$}
    \put (75,25) {$\bar{x}$}
    \end{overpic}
    \caption{On the left: a closed set $S$ (blue), a polyhedron $P$ (green), an extreme point $\bar{x}$ (red) and an $S$-free ball (grey). On the right: a hyperplane separating $\bar{x}$ from $P \setminus \mbox{int}(\mathcal{B}(\bar x,d(\bar x,S)))$}
    \label{fig:oracleballcut}
\end{figure}

In what follows we show a generic algorithm based on separation from $P\setminus \mbox{int}(C)$ that yields a convergent cutting plane algorithm using a simple $S$-free set $C$. Note, however, that separating over $P\setminus \mbox{int(C)}$ is NP-hard \cite{freund1985complexity}. 

\subsection{Oracle-Based Cuts}
\label{sec:ballcut} 
Suppose we have an oracle that provides for any given point $\bar x \notin S$ the nonzero Euclidean distance $d(\bar x,S)$ between $\bar x$ and the nearest point in $S$. 
\begin{remark}
The (closed) ball $\mathcal{B}(\bar x,d(\bar x,S))$ is $S$-free.  
\end{remark}

Suppose $P$ is a (bounded) polytope. We shall demonstrate that this $S$-free ball can be used to construct a pure cutting plane algorithm that will converge in the limit to the convex hull of $S \cap P$. Furthermore, an arbitrarily precise approximation of $\mathcal{B}(\bar x,d(\bar x,S))$ suffices to obtain an arbitrarily precise approximation of $\mbox{conv}(S \cap P)$.  This is not as strong as convergence in finite time, which can be established for simpler problems (e.g. \cite{gomory1963algorithm,porembski2002cone}); such a guarantee is not possible here since $\mbox{conv}(S\cap P)$ may be nonlinear. Finite convergence, however, is not strictly necessary for practical purposes in branch-and-cut (e.g. split cuts \cite{del2012convergence}).  Our result in this section contrasts with the standard intersection cut approach we describe below in \Cref{sec:intcutsnew}, in which good cuts are found by selecting large $S$-free sets and, instead of $P$, a (weaker) basic relaxation of $P$ is used. Here we show that it is possible to converge to $\mbox{conv}(S \cap P)$ by shifting the computational burden from selecting elaborate $S$-free sets to generating cuts over the entire polyhedron, i.e. separation over $\mbox{conv}(P \setminus \mbox{int}(\mathcal{B}(\bar x,d(\bar x,S))))$.

\subsection{Separation}

\label{sec:ballcuts}
We seek a cut $\alpha^T (x-\bar x) \geq \delta$ that separates $\bar x$ from $P\setminus \allowbreak \mbox{int}(\mathcal{B}(\bar x,d(\bar x,S)))$.  Such a cut can be determined via the following master cut generation problem,
\begin{subequations}
\begin{alignat}{2}
\delta^*(\bar x) \, \doteq \, \max_{\alpha,\delta} \ & \delta \nonumber \\
(\mathbf{MC})\ \ \text{s.t. }  & \alpha^T (x-\bar x) \geq \delta & \ \ \forall x \in \mbox{conv}(P\setminus \mbox{int}( \mathcal{B}(\bar x,d(\bar x,S)))),\label{eq:subball} \\
&\|\alpha\|_1 \leq 1. \label{eq:norm}
\end{alignat}
\end{subequations}

The cut normalization constraint~(\ref{eq:norm}) is replaceable, for instance, with the 2-norm. Norm selection has been subject to extensive testing and discussion in mixed-integer programming (e.g. \cite{fischetti2010note}), but we leave alternative formulations of $\mathbf{MC}$ out of this initial proposal.

\Cref{fig:strongball} demonstrates that separation involves more than one nontrivial facet in general, and indeed the problem is NP-hard \cite{freund1985complexity}. The increased computational expense, however, guarantees strong cuts that ensure favourable convergence properties; it also offsets the need to find an appropriate $S$-free set, which is also NP-hard (e.g. \cite{fischetti2007optimizing}).

Next, we argue that, given $\epsilon > 0$, problem $\mathbf{MC}$
can be solved to additive tolerance $\epsilon$ in finite time.  We rely on the following
observation whose proof is straightforward.

\begin{lemma} \label{lem:newpoints}
  Let $Q \subseteq \mathbb{R}^n$ be a polyhedron and consider a ball $\mathcal{B}(u, R)$.
  Then every extreme point $v$ of $\mbox{conv}(Q\setminus \mbox{int}(\mathcal{B}(u,R)))$ is either (a) an extreme point of $Q$, or (b) is contained in a $1$-dimensional face of $Q$, in which case $d(u,v) = R$.
\end{lemma}  

In view of this result, problem $\mathbf{MC}$ can be rewritten as a finite linear program, by simply enumerating all extreme points of $P\setminus \mbox{int}( \mathcal{B}(\bar x,d(\bar x,S)))$. This approach, however, does not yield a finite algorithm because extreme points of type (b) may have irrational
coordinates. This issue is resolved by replacing each type (b) extreme point $v$, with a rational point $\hat v$,
in the same 1-dimensional face of $P$ but slightly closer to $\bar x$.  In particular, given
$\epsilon > 0$ we can guarantee that $\|v - \hat v\|_1 \le \epsilon$. By applying this method to all
type (b) extreme points we
obtain a rational polyhedron $P_\epsilon(\bar x)$ containing $P\setminus \mbox{int}( \mathcal{B}(\bar x,d(\bar x,S)))$ such that
\begin{subequations}
\begin{alignat}{2}
\delta^*_\epsilon (\bar x) \, \doteq \, \max_{\alpha,\delta} \ & \delta \nonumber \\
(\mathbf{MC_\epsilon})\ \ \text{s.t. }  & \alpha^T (x-\bar x) \geq \delta & \quad  \forall x \in P_\epsilon(\bar x)\\
&\|\alpha\|_1 \leq 1
\end{alignat}
\end{subequations}
satisfies $\delta^*_\epsilon (\bar x) \le \delta^*(\bar x) \le \delta^*_\epsilon (\bar x) + \epsilon$.  
One can further argue that the extreme points of $P_\epsilon(\bar x)$, represented as rationals, require a number of digits that is polynomial in the size of the description of $P$, the number of digits in an $\epsilon$-approximation to $\log d(\bar x, S)$, and $\epsilon$. For related material, see \cite{Schrijver86}.

As an alternative to the above method, one can address problem $\mathbf{MC}$ using Benders decomposition\cite{benders1962partitioning}, by relying on a separation algorithm to handle constraint~(\ref{eq:subball}). Given a proposed candidate cut $(\hat \alpha, ~ \hat \delta)$, we wish to find a point $\hat x \in \mbox{conv}(P\setminus \allowbreak \mbox{int}(\mathcal{B}(\bar x,d(\bar x,S))))$ for which $\hat \alpha^T(\hat x - \bar x) <\hat \delta$, or else certify that the candidate cut is valid for constraint~(\ref{eq:subball}). This task may be formulated as the subproblem
\begin{subequations} \label{eq:sepproblem}
\begin{alignat}{2}
z_{\mbox{sc}}^* (\hat \alpha, \hat \delta) := & \ \max_x \ d(x,\bar x) \nonumber\\
(\mathbf{SC})\ \quad \text{s.t. } & \quad \hat \alpha^T (x-\bar x) \leq \hat \delta,\label{eq:obeycut}\\
&\quad x \in P, \label{eq:inP}
\end{alignat}
\end{subequations}

Clearly, the cut $(\hat \alpha, ~ \hat \delta)$ is valid for  $\mathbf{MC}$ iff $z_{\mbox{sc}}^*(\hat \alpha, \hat \delta)\leq d(\bar x,S)$. Problem \eqref{eq:sepproblem} can be solved within any desired tolerance in finite time
(by enumerating extreme points of $P$, or by using branch-and-bound \cite{locatelli2000finite}).

Thus, the \textit{separation problem} for $\mathbf{MC}$ can be (approximately) solved in finite time; using the machinery of the ellipsoid method (see \cite{Schrijver86,grotschel1981ellipsoid}) one thus obtains another finite method for solving $\mathbf{MC}$ to a given tolerance, in
finite time, and over the rationals.

\begin{figure}
  \centering
   \includegraphics[width=0.5\textwidth]{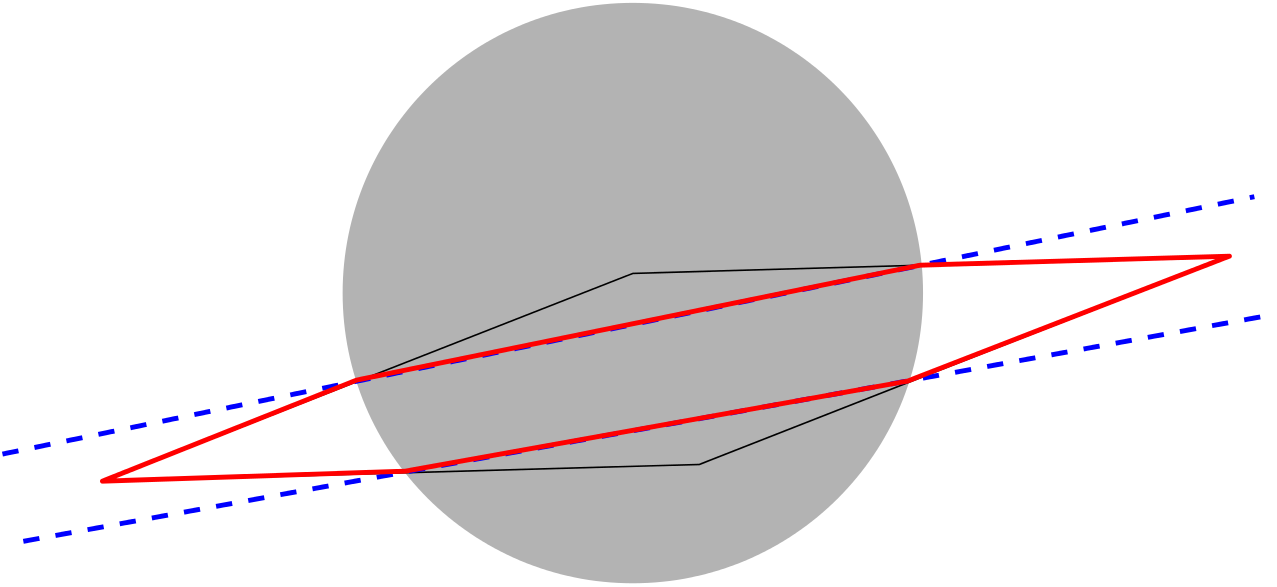}
      \caption{A parallelogram $P$ minus a ball $B$. The convex hull of $P\setminus B$ shown in thick red lines; its nontrivial facets are described by the cuts indicated by dotted blue lines.}
\label{fig:strongball}
\end{figure}

\subsection{Convergence of Cut Closures}
\label{sec:convclose}
We can (approximately) separate over $P\setminus \allowbreak \mbox{int}(\mathcal{B}(\bar x,d(\bar x,S)))$ by solving (approximately) $\mathbf{MC}$.  We now study the strength of such oracle-based cuts in two ways.  First, we consider the cut closure of such cuts, and show that a sequence of such closures converges to the best possible convex relaxation, $\mbox{conv}(P\cap S)$.  Second, we show that a cutting plane procedure, leveraging $\mathbf{MC}$, can converge to within arbitrary distance of $\mbox{conv}(P\cap S)$ in finite time.

\subsubsection{Cut Closures}
Throughout this subsection we assume that $P$ is bounded.  We follow closely the proof strategy of Averkov \cite[Theorem 3.6]{averkov2011finite}, which establishes convergence with respect to certain cuts given some (different) structural assumptions regarding $S$.  Our result applies to closed sets $S$ equipped with an oracle, which is a different domain of application than that of Averkov.  We also allow for a cutting plane procedure (in particular, procedure $\mathbf{MC}$) with fixed numerical precision, where separation is only guaranteed over a ball with radius exceeding some minimum threshold $\lambda \geq 0$. More precisely,
we assume that there is an oracle that, given $x \in \mathbb{R}^n$, returns an estimate $\tilde d(x,S)$ with $d(x,S) \le \tilde d(x,S) \le  d(x,S) + \lambda$.  This
yields an underestimate for $d(x,S)$:
$d(x,S) - \lambda \le \tilde d(x,S) - \lambda \le d(x,S)$. We will term the quantity
$\tilde d(x,S)$ a \emph{$\lambda$-overestimate} for $ d(x,S)$. Other notions of approximations for $d(x,S)$ are similarly handled.

The Hausdorff distance $d_H(X,Y) := \allowbreak \max\{\sup_{x\in X}d(x,Y),\allowbreak \sup_{y\in Y}d(y,X)\}$ between two sets $X,Y$ provides a natural way to describe convergence. An alternative definition of $d_H$ is available using the notion of $\epsilon$\emph{-fattening}. The $\epsilon$-fattening of a set $X$ is $X_\epsilon := \cup_{x\in X}\mathcal{B}(x,\epsilon)$, and so $d_H(X,Y) = \allowbreak \inf\{\epsilon\geq 0 | X \subseteq Y_\epsilon, Y \subseteq X_\epsilon\}$.  
Now let $P_0(\lambda) := P$, and define the rank $k$ closure (see  \cite{chvatal1973edmonds}) recursively as
\[P_{k+1}(\lambda) := \allowbreak \bigcap_{x\in\mbox{ext}(P_{k}(\lambda))} \allowbreak \mbox{conv}(P_{k}(\lambda) \setminus \allowbreak \mbox{int}(\mathcal{B}(x,\max\{d(x,S)-\lambda,0\})))\]

Furthermore define the compact convex set $P_{\infty}(\lambda) := \cap_{k=0}^{\infty} P_{k}(\lambda)$, which is the infinite rank cut closure. Two lemmas are used, with proofs that can be found in Schneider \cite{schneider2014convex}. The first lemma gives us Hausdorff convergence in the sequence of cut closures \cite[Lemma 1.8.2 \& p. 69 Note 4]{schneider2014convex}.  

\begin{lemma}
\label{lem:hausconv}
Let $(C_k)_{k\in\mathbb{N}}$ be a sequence of nonempty compact sets in $\mathbb{R}^n$, and denote $C_\infty := \cap_{i=0}^\infty C_k$.  If $C_k \supseteq C_{k+1} \forall k$ then it holds that $C_\infty = \lim_{k\to\infty}C_k$ and $\lim_{k\to\infty} d_H(C_k,C_\infty)=0$.
\end{lemma}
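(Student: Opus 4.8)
The statement is a standard fact from convex geometry (indeed it is quoted from Schneider), so the plan is to give a short self-contained argument rather than invoke the reference. First I would observe that the sequence $(C_k)$ is nested and decreasing, so $C_\infty = \bigcap_k C_k$ is an intersection of nonempty nested compact sets; by the finite intersection property $C_\infty$ is nonempty, and it is compact (closed as an intersection of closed sets, bounded since it sits inside $C_0$). I would also note that each $C_k \supseteq C_\infty$, so $d(c,C_\infty)$ makes sense and $\sup_{c \in C_\infty} d(c,C_k) = 0$ trivially; hence $d_H(C_k,C_\infty) = \sup_{c\in C_k} d(c,C_\infty)$, and the whole task reduces to showing this quantity tends to $0$.

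The core of the argument is a compactness/contradiction step. Suppose $d_H(C_k,C_\infty)\not\to 0$. Since $d_H(C_k,C_\infty)$ is nonincreasing in $k$ (because $C_{k+1}\subseteq C_k$ forces $\sup_{c\in C_{k+1}}d(c,C_\infty)\le \sup_{c\in C_k}d(c,C_\infty)$), it converges to some $\epsilon>0$. Then for every $k$ there is a point $x_k \in C_k$ with $d(x_k,C_\infty)\ge \epsilon/2$ (say). All the $x_k$ lie in the compact set $C_0$, so a subsequence $x_{k_j}$ converges to some $x^* \in C_0$ with $d(x^*,C_\infty)\ge \epsilon/2$, hence $x^*\notin C_\infty$. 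But for any fixed index $m$, once $k_j \ge m$ we have $x_{k_j}\in C_{k_j}\subseteq C_m$, and $C_m$ is closed, so the limit $x^*\in C_m$. Since this holds for every $m$, $x^* \in \bigcap_m C_m = C_\infty$, a contradiction. Therefore $d_H(C_k,C_\infty)\to 0$, and the $\epsilon$-fattening characterization of $d_H$ recorded just above the lemma then yields $C_\infty = \lim_{k\to\infty} C_k$ in the Hausdorff metric.

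The only mild subtlety — and the one place I would be careful — is making sure the nestedness is genuinely being used in both directions: it gives monotonicity of $d_H(C_k,C_\infty)$ (so that "not converging to $0$" means "bounded below by a positive constant"), and it gives the inclusion $x_{k_j}\in C_m$ for large $j$ that lets the limit point fall into every $C_m$. Neither step is hard, but if one drops the monotonicity assumption the statement is false, so I would flag explicitly that $C_k\supseteq C_{k+1}$ is what makes the selected points accumulate inside the tail of the sequence. No further structure (convexity, the specific form of the $P_k(\lambda)$) is needed here; the lemma is purely about nested compacta.
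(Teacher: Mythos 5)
Your proposal is correct. The paper does not prove this lemma at all --- it simply cites Schneider \cite{schneider2014convex} (Lemma 1.8.2 and the note on p.~69) --- so there is no in-paper argument to compare against; your self-contained proof is the standard one for nested compacta and all the steps check out: nonemptiness of $C_\infty$ via the finite intersection property, reduction of $d_H(C_k,C_\infty)$ to the one-sided quantity $\sup_{c\in C_k} d(c,C_\infty)$, monotonicity of that quantity, and the subsequence/closedness argument placing the accumulation point in every $C_m$ and hence in $C_\infty$. Your flag that nestedness is used twice (for monotonicity and for trapping the limit point in each $C_m$) is exactly the right thing to be careful about, and filling in the proof rather than deferring to the reference is a reasonable choice here since the argument is short.
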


\begin{corollary}
\label{cor:finite}
Let $(C_k)_{k\in\mathbb{N}}$ and $C_\infty$ be as in Lemma \ref{lem:hausconv}. For each 
$\epsilon > 0$ there exists $k_{\epsilon}$ such that for all $k \ge k_{\epsilon}$,
$d_H(C_k, C_\infty) \le \epsilon$.
\end{corollary}

The next lemma \cite[Lemma 1.4.6]{schneider2014convex} ensures the existence of a ball cut that can separate an extreme point of a convex relaxation.
\begin{lemma}
\label{lem:caplem}
Let $C \subset \mathbb{R}^n$ be a closed, convex set and let $x \in C$.  Then $x$ is an extreme point of $C$ iff for every open neighbourhood $U$ around $x$ there exists a hyperplane $H$ defining the boundary of two (separate) halfspaces $H^-,H^+$ such that $x\in \textnormal{int}(H^-), C\setminus U \subseteq \textnormal{int} (H^+)$.
\end{lemma}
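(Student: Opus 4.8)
The plan is to prove the equivalence directly, the reverse implication being the easy one.

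For the reverse implication I would argue by contrapositive. If $x$ is not an extreme point of $C$, then $x=\lambda y+(1-\lambda)z$ for some $y,z\in C$ with $y\neq z$ and $\lambda\in(0,1)$, and necessarily $y\neq x\neq z$ (as $\lambda\in(0,1)$ and $y\neq z$). I would then take $U:=\mathcal{B}(x,r)$ with $r$ smaller than both $\|y-x\|_2$ and $\|z-x\|_2$, so that $y,z\in C\setminus U$. Any hyperplane $H$ with $C\setminus U\subseteq\mbox{int}(H^+)$ would then have $y,z\in\mbox{int}(H^+)$, and since $\mbox{int}(H^+)$ is convex this forces $x=\lambda y+(1-\lambda)z\in\mbox{int}(H^+)$, which is incompatible with $x\in\mbox{int}(H^-)$ (the interiors of the two halfspaces are disjoint); so the property in the statement fails for this particular $U$.

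For the forward implication, let $x$ be an extreme point of $C$ and $U$ an arbitrary open neighbourhood of $x$. I would first replace $U$ by a ball: pick $\epsilon>0$ with $\mathcal{B}(x,\epsilon)\subseteq U$ and set $D:=C\setminus\mbox{int}(\mathcal{B}(x,\epsilon))$, a closed set with $C\setminus U\subseteq D$ and $x\notin D$; it then suffices to separate $x$ strictly from $D$ (if $D=\emptyset$ any hyperplane strictly missing $x$ works). The next step is to note that $x\notin\mbox{conv}(D)$: a representation of $x$ as a finite convex combination of points of $D\subseteq C$ with at least one positive weight would, by extremality of $x$ in $C$, force all those points to equal $x$, contradicting that they are at distance $\geq\epsilon$ from $x$. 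I would then upgrade this to $x\notin\mbox{clconv}(D)$ — immediate when $C$ is compact, since then $D$ is compact and $\mbox{conv}(D)$ is already closed, being the continuous image (via Carath\'eodory's theorem) of a compact parameter set, and in general by observing that $\mbox{clconv}(D)\subseteq C$, so that $x\in\mbox{clconv}(D)$ would make $x$ an extreme point of $\mbox{clconv}(D)$ and hence, by a finite-dimensional version of Milman's theorem (the extreme points of the closed convex hull of a closed set lie in that set), an element of $D$. Finally, since $x$ lies outside the closed convex set $\mbox{clconv}(D)$, standard strict separation (via the projection of $x$ onto $\mbox{clconv}(D)$) produces a hyperplane $H$ with $x\in\mbox{int}(H^-)$ and $\mbox{clconv}(D)\subseteq\mbox{int}(H^+)$, hence $C\setminus U\subseteq\mbox{int}(H^+)$.

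The step I expect to be the main obstacle is passing from $x\notin\mbox{conv}(D)$ to $x\notin\mbox{clconv}(D)$ when $C$ is unbounded: there $\mbox{conv}(D)$ need not be closed, and one must rule out $x$ being a limit of convex combinations of points of $D$ that escape to infinity. This is precisely what a recession-cone argument — controlling the asymptotic directions of such combinations — or the finite-dimensional form of Milman's theorem supplies. For the application in \Cref{sec:convclose}, however, every relevant $C=P_k(\lambda)$ is a polytope and hence compact, so the compact case suffices and this subtlety does not arise.
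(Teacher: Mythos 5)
Your proof is correct; note that the paper itself does not prove this lemma but quotes it from Schneider, so you are supplying an argument where the paper only cites one. The reverse direction (contrapositive via a non-trivial convex combination $x=\lambda y+(1-\lambda)z$ and a ball $U$ excluding $y,z$) is exactly right, and in the forward direction the reduction to a ball, the fact that $x\notin\mathrm{conv}(D)$ for $D=C\setminus\mathrm{int}(\mathcal{B}(x,\epsilon))$, and the final strict separation are all sound. You correctly identify the one delicate step: passing from $\mathrm{conv}(D)$ to $\mathrm{clconv}(D)$ when $C$ is unbounded. The closed-set form of Milman's theorem you invoke is in fact true in $\mathbb{R}^n$, but it is essentially equivalent in difficulty to the statement being proved, so leaning on it is circular in spirit unless you have a citation. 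A cleaner, self-contained route makes the unbounded case as easy as the compact one: separate $x$ from the \emph{compact} set $\Sigma:=C\cap\mathrm{bd}(\mathcal{B}(x,\epsilon))$ rather than from $D$. Extremality gives $x\notin\mathrm{conv}(\Sigma)$, and $\mathrm{conv}(\Sigma)$ is compact, so there exist $a$ and $\beta_1<\beta_2$ with $a^Tx\le\beta_1$ and $a^Ts\ge\beta_2$ for all $s\in\Sigma$. Any $d\in D$ satisfies $d=x+t(s-x)$ for some $s\in\Sigma$ and $t=\|d-x\|_2/\epsilon\ge 1$, because the segment $[x,d]\subseteq C$ must cross the sphere; hence $a^Td=a^Tx+t(a^Ts-a^Tx)\ge a^Ts\ge\beta_2$, and the hyperplane $a^Tz=(\beta_1+\beta_2)/2$ does the job (this is the ``spherical cap'' argument underlying Schneider's proof, and it yields the closed-set Milman statement as a byproduct). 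Your closing observation is also correct: in \Cref{sec:convclose} every relevant $C$ is a polytope, so the compact case suffices for the paper's purposes.
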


\begin{theorem}
\label{thm:convtheorem}
$P_{\infty}(\lambda) \subseteq \textnormal{conv}(P\cap S_\lambda)$, where $S_\lambda$ is the $\lambda$-fattening of $S$.
\end{theorem}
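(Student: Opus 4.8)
The plan is to show that any point $\bar x \in P_\infty(\lambda)$ lies in $\textnormal{conv}(P \cap S_\lambda)$, where $S_\lambda$ denotes the $\lambda$-fattening of $S$. The overall strategy follows Averkov's Theorem 3.6: argue by way of the structure of $P_\infty(\lambda)$ as a compact convex set, using its extreme points. First I would invoke \Cref{lem:hausconv} with $C_k := P_k(\lambda)$ to confirm that $P_\infty(\lambda)$ is itself a nonempty compact convex set (intersection of nested compacta, with $P$ a polytope guaranteeing nonemptiness and compactness) and that $P_k(\lambda) \to P_\infty(\lambda)$ in the Hausdorff metric. Since $P_\infty(\lambda)$ is compact and convex, it equals the convex hull of its extreme points (Minkowski/Krein--Milman in finite dimension), so it suffices to prove that every extreme point $\bar x$ of $P_\infty(\lambda)$ belongs to $S_\lambda$, i.e.\ satisfies $d(\bar x, S) \le \lambda$.

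The core of the argument is a proof by contradiction: suppose $\bar x \in \textnormal{ext}(P_\infty(\lambda))$ with $d(\bar x, S) > \lambda$, so that $r := d(\bar x, S) - \lambda > 0$ and the open ball $\mathcal{B}(\bar x, r)$ has the property that $\textnormal{int}(\mathcal{B}(\bar x, d(\bar x,S)))$ is $S$-free (by the Remark), hence $\mathcal{B}(\bar x, r)$ is certainly $S$-free and moreover is the ball the cut procedure would subtract at $\bar x$ given a $\lambda$-overestimate of the distance. Now apply \Cref{lem:caplem} to the closed convex set $C = P_\infty(\lambda)$ and the extreme point $\bar x$ with the open neighbourhood $U := \mathcal{B}(\bar x, r)$: there is a hyperplane $H$ with $\bar x \in \textnormal{int}(H^-)$ and $P_\infty(\lambda) \setminus U \subset \textnormal{int}(H^+)$. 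Geometrically this says the ``cap'' of $P_\infty(\lambda)$ near $\bar x$ is contained inside $\mathcal{B}(\bar x, r)$; in particular there is some $\epsilon > 0$ such that the slab-truncated cap $P_\infty(\lambda) \cap H^-$ sits strictly inside $\mathcal{B}(\bar x, r - \epsilon)$, again $S$-free.

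The remaining step is to transfer this cap from $P_\infty(\lambda)$ to a finite closure $P_k(\lambda)$ and derive the contradiction. Using Hausdorff convergence $P_k(\lambda) \to P_\infty(\lambda)$ and compactness of $H^+$-side, for $k$ large enough the piece $P_k(\lambda) \cap H^-$ is still contained in the $S$-free ball $\mathcal{B}(\bar x, d(\bar x,S) - \lambda)$ (shrinking radii absorb the Hausdorff error). But then, picking a vertex $v$ of the polytope $P_k(\lambda)$ inside this cap (such a vertex exists because $\bar x$, an extreme point of the limit and hence arbitrarily well-approximated, forces $\textnormal{ext}(P_k(\lambda))$ to have a point in the cap — here one uses that $\bar x \in P_\infty(\lambda) \subseteq P_k(\lambda)$ is extreme in the limit, so it is a limit of extreme points of the $P_k(\lambda)$), the round-$(k{+}1)$ cut at $v$ uses the ball $\mathcal{B}(v, \min\{d(v,S)-\lambda, 0\})$, wait — one subtracts $\textnormal{int}(\mathcal{B}(v, d(v,S)-\lambda))$, which contains $\bar x$ once $v$ and its ball are large enough to engulf $\bar x$; hence $\bar x \notin P_{k+1}(\lambda) \supseteq P_\infty(\lambda)$, contradicting $\bar x \in P_\infty(\lambda)$. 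The main obstacle I anticipate is the bookkeeping in this last step: making precise that an extreme point of the nested intersection is approximated by extreme points of the $P_k(\lambda)$ (so that an \emph{actual vertex} of $P_k(\lambda)$, not just an interior point, lands in the $S$-free cap and triggers a cut), and carefully tracking the interplay between the $\lambda$ slack, the Hausdorff error at stage $k$, and the radius $r$ so that the separating ball genuinely contains $\bar x$ in its interior. Once that is set up, the contradiction is immediate, and combined with $\textnormal{ext}(P_\infty(\lambda)) \subseteq S_\lambda$ and $P_\infty(\lambda) \subseteq P$ (monotonicity of the closures), we get $P_\infty(\lambda) = \textnormal{conv}(\textnormal{ext}(P_\infty(\lambda))) \subseteq \textnormal{conv}(P \cap S_\lambda)$.
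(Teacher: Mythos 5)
Your overall strategy is the paper's own (the Averkov-style contradiction via an extreme point of $P_\infty(\lambda)$, \Cref{lem:caplem}, and Hausdorff convergence from \Cref{lem:hausconv}), but two steps in your execution have genuine gaps. The first is the radius of the neighbourhood $U$. You take $U=\mathcal{B}(\bar x,r)$ with $r:=d(\bar x,S)-\lambda$, i.e.\ the full slack. The vertex $v=x_{k_0}$ of $P_{k_0}(\lambda)$ that you locate inside the cap can lie anywhere in $U$, so you only get $d(v,S)-\lambda\geq d(\bar x,S)-\lambda-d(v,\bar x)$, which can be arbitrarily close to $0$; the ball subtracted at $v$ then need not contain $\bar x$ at all, let alone the whole cap. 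You correctly flag this as ``the main obstacle'' but do not resolve it. The resolution is to shrink the neighbourhood: the paper takes $r=(d(\bar x,S)-\lambda)/3$, so that $d(v,\bar x)<r$ forces $d(v,S)-\lambda>2r=\mathrm{diam}(U)$ and hence $U\subset\mathrm{int}(\mathcal{B}(v,d(v,S)-\lambda))$. Any fraction small enough to engulf all of $U$ works, but some such fraction is essential; with your choice the argument breaks.

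The second gap is the final inference ``the ball at $v$ contains $\bar x$, hence $\bar x\notin P_{k+1}(\lambda)$.'' This does not follow: $P_{k+1}(\lambda)$ is an intersection of \emph{convex hulls} of the form $\mathrm{conv}(P_k(\lambda)\setminus\mathrm{int}(\mathcal{B}(\cdot)))$, and a point inside a removed ball can perfectly well lie in the convex hull of what remains (think of a ball in the middle of a polytope). This is precisely why the halfspaces from \Cref{lem:caplem} are needed in the conclusion, not just in the setup: once $U\subset\mathrm{int}(\mathcal{B}(x_{k_0},d(x_{k_0},S)-\lambda))$, one has $P_{k_0}(\lambda)\setminus\mathrm{int}(\mathcal{B}(x_{k_0},\cdot))\subseteq P_{k_0}(\lambda)\setminus U\subseteq H^+$, so the convex hull (and hence $P_{k_0+1}(\lambda)$) lies in $H^+$, while $\bar x\in\mathrm{int}(H^-)$ — that is the contradiction. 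You set up $H^\pm$ but then abandon them at the decisive moment. A smaller point: to place a vertex of $P_{k_0}(\lambda)$ in the cap you appeal to extreme points of $P_\infty(\lambda)$ being limits of extreme points of $P_k(\lambda)$, which is both unnecessary and delicate; it suffices to note that a linear functional defining $H^-$ attains its maximum over the compact convex set $P_{k_0}(\lambda)$ at an extreme point, which must then lie in $H^-$, and the sandwich $(P_\infty(\lambda))_\epsilon\supseteq P_{k_0}(\lambda)$ places it in $U$.
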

\begin{proof}
By construction $P_{\infty}(\lambda) \subseteq P_0(\lambda)=P$, so it is sufficient to show that $\mbox{ext}(P_{\infty}(\lambda)) \in S_\lambda$.  We shall do so by way of contradiction. Suppose there exists $\bar x \in \mbox{ext}(P_{\infty}(\lambda))$ such that $\bar x \notin S_\lambda$; observe that $\bar x \notin S_\lambda$ implies $d(\bar x,S)-\lambda>0$.  Then let $U$ be an open ball of radius $r := \allowbreak (d(\bar x,S)-\lambda)/3$ centered at $\bar x$. By \Cref{lem:caplem} there exist two opposite-facing halfspaces $H^+,H^-$ such that $\bar x \in \textnormal{int}(H^-)$ and $P_{\infty}(\lambda) \setminus U \subseteq \textnormal{int}(H^+)$.  Since $U$ is open and $P_{\infty}(\lambda)\cap H^-$ is in the interior of $U$ (as otherwise $P_{\infty}(\lambda) \setminus U \subseteq \textnormal{int}(H^+)$ is violated), there exists sufficiently small $\epsilon>0$ such that $(P_{\infty}(\lambda))_\epsilon \cap H^-$ is also contained in $U$. Now  \Cref{lem:hausconv} gives us some rank $k_0$ for which we have the sandwich $(P_{\infty}(\lambda))_\epsilon \supseteq P_{k_0}(\lambda) \supseteq P_{\infty}(\lambda)$.  Furthermore, since $H^-$ separates an extreme point of $P_{\infty}(\lambda)$, it also separates an extreme point of the superset $P_{k_0}(\lambda)$.  Thus there exists some extreme point $x_{k_0}\in\mbox{ext}(P_{k_0}(\lambda))$ that lies in $P_{k_0}(\lambda)\cap H^-\subset U$, and so $d(x_{k_0},\bar x)<r$.  Thus we have, starting from the triangle inequality 
\begin{alignat*}{2}
&d(x_{k_0}, S) + d(x_{k_0},\bar x) &\ \geq \ & d(\bar x, S)\\
\implies &d(x_{k_0},S)-\lambda &\ \geq \ & d(\bar x,S)-d(x_{k_0},\bar x)-\lambda,\\
&& \ >\  & d(\bar x,S)-\lambda-r,\\
& &\ = \ &2r.
\end{alignat*}

Since $U$ has diameter $2r$, then $U \subset \mbox{int}(\mathcal{B}(x_{k_0},d(x_{k_0},S)-\lambda))$.  As $H^+$ is valid for $P_{k_0}(\lambda)\setminus U$, then $H^+$ is also valid for the nested set $P_{k_0}(\lambda)\setminus \mbox{int}(\mathcal{B}(x_{k_0},d(x_{k_0},S)-\lambda))$.  It follows that $P_{k_0+1}(\lambda) \subseteq H^+$. However, $\bar x \in H^-$, which implies $\bar x \notin P_{k_0+1}(\lambda) \supseteq P_{\infty}(\lambda)$, giving us a contradiction. \qed
\end{proof}  

\subsubsection{Cutting-plane Procedure}
We now provide a formal cutting-plane procedure:

\begin{center}
  \fbox{
    \begin{minipage}{0.9\linewidth}
 \hspace*{.9in} {\bf Procedure CUT} \\
         {\bf Initialization:} Set $\hat P_0 = P$ and $k = 0$.\\
         {\bf Repeat:} \\
         {\bf 1.} Find an extreme point $\bar x$ of $\hat P_k$ such that $$\delta^*(\bar x) \ = \ \mbox{argmax}\{\delta^*(u) \, : u \in \text{ext}(\hat P_k)\}$$
         {\bf 2.} If $\delta^*(\bar x) \le \epsilon$, {\bf STOP.} Algorithm exits.\\
         {\bf 3.} Let $(\hat \alpha, \hat \delta)$ be optimal in the computation of $\delta^*(\bar x)$.\\
         Add the cut $\hat \alpha^T(x - \bar x) \ge \hat \delta$ to $\hat P_k$ to obtain $\hat P_{k+1}$.\\
         Set $k \leftarrow k+1$, and go to 1.
    \end{minipage}
  }
\end{center}
\begin{theorem}
  \label{thm:finiteconvcut} Procedure CUT terminates after a finite number of iterations $K$, and at termination $d_H(\hat P_K, \textnormal{conv}(P\cap S)) \le \sqrt{n} \epsilon$.  Consequently, $\hat P_K \subseteq \mbox{conv}(P\cap S_{\sqrt{n}\epsilon})$.
  \end{theorem}
\begin{proof} We first note that at each iteration of CUT where Step 3 is executed, the added
  inequality cuts off all points $x$ with $d(x,\bar x) \le \sqrt{n} \epsilon$ by Cauchy-Schwarz inequality.

  Now suppose, aiming
  for a contradiction, that CUT does not terminate finitely. Hence there is an infinite sequence
  $x_{k(i)}$, $i = 1, 2, \ldots$ where $x_{k(i)}$ is an extreme point of $\hat P_{k(i)}$. Let $\hat P_{\infty}$ be the limit of the $\hat P_k$.  Thus for $i$ large enough, $d_H(x_{k(i)}, \hat P_{\infty}) < \sqrt{n} \epsilon$.
  This contradicts the point made above.  Hence convergence is finite and another application
  of the same idea yields the second claim.
  
  Since CUT is initialized with $P$, we have $\hat P_K \subseteq P$, and so $d_H(\hat P_K, \textnormal{conv}(P\cap S)) \le \sqrt{n} \epsilon$ implies the final claim.
  \qed \end{proof}

\section{Intersection Cuts}
\label{sec:intcutsnew}
As mentioned in \Cref{sec:ballcuts}, separating over $P\setminus \mbox{int}(C)$ is NP-hard \cite{freund1985complexity}.  A standard workaround is to find a simplicial cone $P'$ containing $P$ and separate over $P'\setminus \mbox{int}(C)$ instead. Provided the apex of $P'$ is contained in the interior of $C$, Balas' intersection cut \cite{balas1971intersection} can be applied. In this section, we describe such cuts and provide a refined strengthening procedure for them.

The first use of simplicial cones to generate cuts can be attributed to Tuy \cite{tuy1964} for minimization of a concave function over a polyhedron; such cuts are named Tuy cuts, concavity cuts, or convexity cuts. The distinction is that Tuy cuts are objective cuts whereas Balas' intersection cuts are feasibility cuts. 

Larger $S$-free sets can generate deeper cuts \cite{conforti2014cut}, which leads to the notion of inclusion-wise maximality.

\begin{definition}
An $S$-free set $C$ is \emph{maximal} if $C'\not\supset C$ for all $S$-free $C'$.
\end{definition} 

Under certain conditions (see \cite{conforti2014cut,cornuejols2013sufficiency,kilincc2014minimal,basu2010minimal}), maximal $S$-free sets are sufficient to generate all nontrivial cuts for a problem.  When $S=\mathbb{Z}^n$, $C$ is called a lattice-free set.  Maximal lattice-free sets are well-studied in integer programming theory \cite{basu2010maximal,andersen2010analysis,borozan2009minimal,andersen2007inequalities,lovasz1989geometry,dey2008lifting,gomory1972some,kilincc2014minimal}, and the notion of $S$-free sets was introduced as a generalization \cite{dey2010constrained}.  

Many cuts in mixed-integer linear programming can be interpreted as intersection cuts, as intersection cuts produce all nontrivial facets of the corner polyhedron \cite{conforti2010equivalence}. 
Several papers \cite{dadush2011split,andersen2013intersection,modaresi2015split,modaresi2015intersection} have worked to extend the intersection cut via split cuts to mixed-integer conic optimization. Intersection cuts have also been considered for bilevel optimization \cite{fischetti2017new}, and factorable MINLPs \cite{serrano2019intersection}. Towle and Luedtke \cite{towle2019intersection} consider cuts for reverse convex sets that exploit $S$-free sets that do not contain the apex of $P'$.

\subsection{Classical derivation}
\label{sec:classicintcut}
Let $P' \supseteq P$ be a simplicial conic relaxation of $P$: a displaced polyhedral cone with apex $\bar x$ that is defined by the intersection of $n$ linearly independent halfspaces. Any $n$ linearly independent constraints describing $P$ can be used to define a simplicial conic relaxation, i.e. a (possibly infeasible) basis of $P$.  

A simplicial cone may be written as follows:
\begin{equation}
P' = \{ \bar x+\sum_{j=1}^n\lambda_j r^{j} \colon \lambda \geq 0\}. \label{eq:charsimcone}
\end{equation}

Each extreme ray of $P'$ is of the form $\{\bar x + \lambda_jr^{j}|\lambda_j \geq 0\}$. 
Alternatively, the simplicial conic relaxation can be given in inequality form 
\begin{equation}
    \label{eq:simconeineq}
    P'=\{x|\bar Ax\leq \bar b\},
\end{equation} 
where $\bar A$ is an invertible $n \times n$ submatrix of $A$, and $\bar b$ are the corresponding entries of $b$. Note that any basis of $P$ would be suitable to derive $P'$. In this case the apex is $\bar x = \bar A^{-1}\bar b$, and the rays $r^{j}$ in \eqref{eq:charsimcone} can be obtained directly from $\bar A$: for each $j$, one can identify $-r^{j}$ as the $j$th column of the inverse of $\bar A$.

We shall assume $\bar x \notin S$, so that $\bar x$ is to be separated from $S$ via separation from $P'\setminus \mbox{int}(C)$, with  $C$ an $S$-free set with $\bar x$ in its interior. Since $\bar x \in \mbox{int}(C)$, there must exist $\hat \lambda>0$ such that $\bar x + \hat \lambda_jr^{j} \in \mbox{int}(C)\ \forall j$.  Also, each extreme ray is either entirely contained in $C$, i.e. $\bar x + \lambda_jr^{j} \in \mbox{int}(C) \, \forall \lambda_j\geq 0$, or else there is a finite intersection point with the boundary, $\exists \lambda^*_j : \bar x + \lambda^*_jr^{j} \in \mbox{bd}(C)$. We refer to $\lambda^*_j$ as the \emph{step length} in the latter case, and for convenience, we define the step length $\lambda_j^* = \infty$ in the former case. 

Let $M$ be the index set of finite step lengths, i.e. $\lambda_m <\infty$ $\forall m \in M$; likewise the complement $\bar M$ is the set of indices of infinite step lengths. The \emph{intersection cut} is the halfspace whose boundary contains each finite intersection point (indexed by $M$) that is parallel to all extreme rays contained in $C$ (indexed by $\bar M$), and that separates $\bar x$. Given $\lambda_j^*\in (0,\infty]\, \forall j = 1, \ldots, n$, Balas \cite[Theorem 2]{balas1971intersection} provides a closed-form expression for the intersection cut $\pi^T x \leq \pi_0$:
\begin{equation}
\pi_0 = \sum_{ i=1}^n (1/\lambda_i^*) \bar b_i -1,\ \pi_j = \sum_{i=1}^n (1/\lambda_i^*) \bar a_{i,j},  \label{eq:closedformcut}
\end{equation}
where $\bar a_{i,j}$ are the entries of $\bar A$ in \eqref{eq:simconeineq} and $1/\infty \coloneqq 0$ \cite[p. 34]{balas1971intersection}. Rearranging terms, one can see the intersection cut is formed using weighted rows of $\bar Ax-\bar b$:
 \begin{equation}
 \sum_{i=1}^n (\bar a_{i,*}x-\bar b_i)/\lambda_i^* \leq - 1.    \label{eq:closedformcut_2}
 \end{equation}
Hence the apex $\bar x=\bar A^{-1}\bar b$ violates the cut by a normalized value of $1$.

Let $V:=\{x|\pi^Tx\leq \pi_0\}$ be the halfspace defined by the intersection cut.  For completeness we include a proof of validity of $V$ (a fact established in the original paper by Balas \cite[Theorem 1]{balas1971intersection}), and furthermore establish a condition in which the cut gives the convex hull of $P'\setminus \textnormal{int}(C)$.

\begin{proposition}
\label{prop:validint}
$V \supseteq P'\setminus \textnormal{int}(C)$.  Furthermore, if all step lengths are finite, i.e. $|\bar M| = 0$, then $V\cap P'=\textnormal{conv}(P'\setminus \textnormal{int}(C))$.
\end{proposition}
\begin{proof}
Let $\bar V$ be the complement of $V$, i.e. an open halfspace. It suffices to establish that $Q := \bar V \cap P' \subseteq \mbox{int}(C)$. By construction of the cut, 

\[Q = \{\bar x+\sum_{i\in M}\lambda_i r^{i}+\sum_{j\in \bar M}\lambda_j r^{j}\, | \, \lambda \geq 0, \lambda_i < \lambda_i^*\ \forall i \in M\}. \]
Let $\hat x$ be a point in $Q$.  Denote $v:= \bar x+\sum_{i\in M}\hat \lambda_i r^{i}$ and $w:=\sum_{j\in \bar M}\hat \lambda_j r^{j}$ so that $\hat x = v+w$. Now $v$ is in the polytope $P'':=\{\bar x + \sum_{i\in M}\lambda_i r^{i}\, | \, 0\leq \lambda\leq \lambda^*\}$.  The extreme points of $P''$ are $\bar x$ and some intersection points forming the intersection cut. The vector $v$ cannot be described solely as the convex combination of intersection points, as this would imply $v \in V$. Hence either $v=\bar x$ or $v$ is the strict convex combination of $\bar x \in \mbox{int}(C)$ and some of the intersection points, thus $v \in \mbox{int}(C)$.  

Observe that $w$ is in the recession cone of $C$ since by construction each extreme ray indexed by $\bar M$ is contained in $C$. Hence $\hat x = v + w$ is also in the interior of $C$, or equivalently $Q \subseteq \mbox{int}(C)$, which in turn implies $V \supseteq P'\setminus \textnormal{int}(C)$.

Now suppose all step lengths are finite. In this case we may write
\[V\cap P' =\{\bar x + \sum_{i=1}^n \lambda_ir^{i}\, | \, \lambda \geq \lambda^*\} =\{\sum_{i=1}^n\lambda_i(\bar x + \sum_{j=1}^n \lambda_jr^{i})/\sum_{j=1}^n\lambda_j\, | \, \lambda\geq \lambda^*\}.\]
Hence every point in $V\cap P'$ is the convex combination of points $p_i := \bar x + \sum_{j=1}^n\lambda_jr^{i},\ i=1,...,n$. Since $\sum^n_{j=1}\lambda_j \geq \lambda_i^* \, \forall i$ and the ray $r^{i}$ emanates from an interior point of $C$ passing through the boundary at step length $\lambda_i^*$, then $p_i\in P'\setminus \mbox{int}(C)\ \forall i$.  Hence $V\cap P' \subseteq \mbox{conv}(P'\setminus \mbox{int}(C))$. \qed
\end{proof}

\Cref{fig:oracleballcut} also serves as an illustration of an intersection cut when the $S$-free set $C$ is a ball. In this example, using a simplicial conic relaxation $P'$ obtained from the basis defining $\bar{x}$ yields an intersection cut which coincides with the oracle-based cut depicted in the figure.

\subsection{Strengthening the Intersection Cut}
\label{sec:strengthen}
The intersection cut is not in general sufficient to capture the convex hull of $P'\setminus \mbox{int}(C)$.  When $|\bar M|\geq 1$, negative step lengths $y<0$ can be used to strengthen intersection cuts. Glover \cite{glover1974polyhedral} proposed a method to derive such negative step lengths for polyhedral $C$, later extended by Sen and Sherali \cite{sen1987nondifferentiable} to general reverse convex programs.  Negative step lengths have also been considered in the context of minimal valid inequalities \cite{dey2010constrained,basu2011convex,basu2010maximal}. We provide here a formula for general-purpose strengthening that provides the provably best rotation of the cut with the finite intersection points fixed.  Furthermore, the coefficients can be calculated with a polynomial number of single-variable optimization problems over the recession cone of $C$. 


\subsubsection{Motivating Example}
A simple example of intersection cut strengthening is shown in \Cref{fig:exstr2}.  Here $S\subset \mathbb{R}^2$ is given by the halfspace in $x_1+x_2 \geq 1$.  We let $C$ be the (unique) maximal $S$-free set, $x_1+x_2 \leq 1$. Now define $P'$ with inequalities $x_1 \geq 0$ and $x_2 \leq 0$.  The apex of $P'$ is at the origin, and the extreme ray directions are $r^{1} = (1,0), r^{2} = (0,-1)$. The intersection points with $C$ are at $(1,0)$ along $r^{1}$ and infinity along $r^{2}$. The intersection cut is given by $x_1 \geq 1$.  The best cut possible from $C$, however is $x_1+x_2 \geq 1$, i.e. the set $S$ itself.  To obtain this cut from $P'$ one can use the cut passing through the (original) finite intersection point $(1,0)$ together with $(0,1)$, which can be obtained by taking a step of length $-1$ from the origin along $r^{2}$.  This strengthened cut can also be viewed as a standard intersection cut generated from a tightening of $P'$ where $r^2$ is rotated to $r^2_*$.

\begin{figure}
  \centering
   \includegraphics[width=0.4\textwidth]{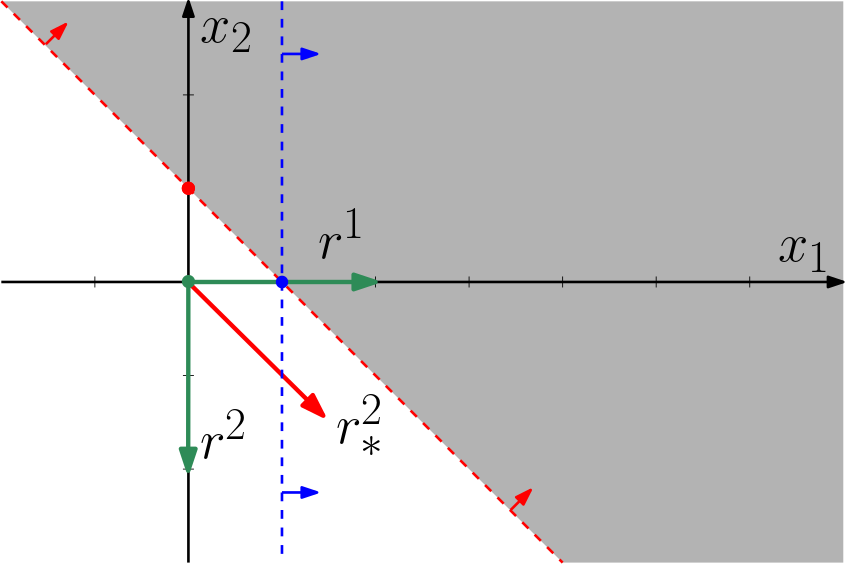}
      \caption{Example of cut strengthening.  In green, simplicial cone $P'$ with rays $r^1,r^2$; origin marked with green dot.  In grey, set $S$, and the $S$-free set $C$ is the complement of $S$.  Blue dashes indicate the standard intersection cut;  a blue dot marks the intersection point with $\mbox{bd}(C)$.  The strengthened intersection cut is shown with red dashes; a red dot marks the intersection point obtained with a negative step length. The rotated extreme ray $r^2 \to r^2_*$ is shown in red.}
\label{fig:exstr2}
\end{figure}

\subsubsection{Strengthening Procedure} \label{sec:strengthening}
The index set of finite step lengths $M$ is assumed to be nonempty, as otherwise the original problem is proven to be infeasible, i.e. $S\cap P = \emptyset$. Our strengthened intersection cut is of the form
\begin{equation}
\sum_{m\in M} (\bar a_{m,*}x-\bar b_m)/\lambda_m^* +\sum_{j \in \bar M} (\bar a_{j,*}x-\bar b_j)/y_j  \leq - 1     \label{eq:negcut}
\end{equation}
where $y_j < 0$ is a negative step length. Each negative step length is a parameter that rotates the intersection cut about the axis defined by the other $(n-1)$ fixed intersection points (including intersections at infinity). The strongest such cut is obtained with maximal valid rotation, i.e. (algebraically) maximal values of $y$. We prove in \Cref{thm:validstr} that such values are given by:
\begin{equation}
\label{eq:strength}
y^*_j :=  \max \{\gamma\, |\, \lambda^*_mr^{m} - \gamma r^{j} \in \mbox{rec}(C)\quad \forall m \in M\}.
\end{equation}
If no such $\gamma$ exists we take $y^*_j=-\infty$ with the understanding that $1/y^*_j=1/\lambda_j^*=0$. Note that $y^*<0$ by the following argument.  Suppose $\gamma \geq 0$ and furthermore for some $m \in M$ we have $w_m := \lambda^*_mr^{m} - \gamma r^{j} \in \mbox{rec}(C)$, then $r^{m}$ is a conic combination of $w_m$ and $r^{j}$.  But this is impossible since $r^{m} \notin \mbox{rec}(C)$.

Now let $K \subseteq \bar M$ be the set of strictly negative finite step lengths given by \eqref{eq:strength}, i.e. $-\infty <y^*_k<0 \ \forall k \in K$. Furthermore, let $V_y$ be the halfspace defined by the strengthened cut \eqref{eq:negcut}, parameterized by negative step lengths $y$.

\begin{theorem}
\label{thm:validstr}
$V_y \cap P' \supseteq P'\setminus \mbox{int}(C)$ if and only if $y \leq y^*$.
\end{theorem}
\begin{proof}
\Cref{prop:validint} establishes validity of the original intersection cut \eqref{eq:closedformcut}.  Let $W := (V\cap P')\setminus V_y$ with $V$ the halfspace defined by the original intersection cut, i.e, $W$ is the space removed by strengthening the intersection cut. 

First, note that we always have $V_y \cap P' \subseteq V\cap P'$. We claim proving the theorem amounts to determining whether $W \subseteq \mbox{int}(C)$; if so
\[ V_y \cap P' = V\cap P' \setminus W \supseteq V\cap P'\setminus \mbox{int}(C)=P'\setminus \mbox{int}(C),\] 
where the last equality is given by \Cref{prop:validint}. If not, then $V_y$ removes points from $P'\setminus\mbox{int}(C)$.

Define the index set $K_y := \{k\, |\, y_k > -\infty\}$. Throughout we assume the nontrivial case that $|K_y|\geq 1$, as otherwise the strengthened intersection cut is equivalent to the original cut. The set $\mbox{cl}(W)$ is described by:
\begin{subequations}
\begin{alignat}{2}
&\bar Ax \leq \bar b \label{eq:sys1}\\
&\sum_{m\in M} (\bar a_{m,*}x-\bar b_m)/\lambda_m^*  \leq - 1 \label{eq:sys2}\\
&\sum_{m \in M} (\bar a_{m,*}x-\bar b_m)/\lambda_m^* +\sum_{k \in K_y} (\bar a_{k,*}x-\bar b_k)/y_k  \geq - 1.\label{eq:sys3}
\end{alignat}
\end{subequations}
Note that $W$ is described by the above system with (\ref{eq:sys3}) strictly satisfied. We shall now characterize the extreme points and extreme rays of $\mbox{cl}(W)$.  \\

\noindent\emph{Extreme points of $\mbox{cl}(W)$:}\\
The extreme points are determined by $n$ linearly independent binding constraints among \eqref{eq:sys1}-\eqref{eq:sys3} (with all remaining constraints satisfied). If $n$ inequalities are binding among \eqref{eq:sys1} we recover $\bar x$, i.e. the apex of $P'$, which violates (\ref{eq:sys2}) (it is cut off by $V$).  Therefore at least one of (\ref{eq:sys2}) or (\ref{eq:sys3}) is binding at each extreme point. Additionally, since $1/\lambda_m^* >0 \ \forall m \in M$, in order for (\ref{eq:sys2}) to be satisfied there must exist some $\hat m\in M$ such that $\bar a_{\hat m,*}x-\bar b_{\hat m}<0$. We analyze two cases: 

\noindent\underline{Case 1: (\ref{eq:sys2}) is binding.} If, additionally, (\ref{eq:sys3}) is not binding, then $\sum_{k \in K_y}(\bar a_{k,*}x-\bar b_k)/y_k  > 0$. Then, since $y<0$, there must exist some $\hat k \in K_y$ such that $\bar a_{\hat k,*}x-\bar b_{\hat k}<0$.  Now, $n-1$ constraints among $\bar Ax\leq \bar b$ must be binding, which implies $\hat m =\hat k$; however, this is impossible since $M$ and $K_y \subseteq \bar M$ are disjoint. Thus, (\ref{eq:sys3}) is also binding, which implies
\[\sum_{k \in K_y} (\bar a_{k,*}x-\bar b_k)/y_k  = 0.\]
Furthermore, given $\bar Ax\leq \bar b$ and $y<0$, we have $\bar a_{k,*}x-\bar b_k = 0 \ \forall k \in K_y$.  However, since (\ref{eq:sys3}) is a linear combination of (\ref{eq:sys2}) and the rows of (\ref{eq:sys1}) indexed by $K_y$ (all of which are binding), we conclude that we must have $n-1$ binding constraints among (\ref{eq:sys1}) at an extreme point of $\mbox{cl}(W)$. Indeed, for each $\hat m\in M$, there is a corresponding extreme point determined by $\bar a_{i,*}x=\bar b_i, i \in M, i \neq \hat m$ (from (\ref{eq:sys1})) and $(\bar a_{\hat m,*}x-\bar b_{\hat m})/\lambda_{\hat m}^*=-1$ (from (\ref{eq:sys2})). These coincide with the extreme points of $V \cap P'$, i.e. finite intersection points of $P'$ with $C$. 

\noindent\underline{Case 2: (\ref{eq:sys2}) is not binding.} In this case (\ref{eq:sys3}) is binding,  i.e.
\begin{alignat*}{2}
&\sum_{m \in M} (\bar a_{m,*}x-\bar b_m)/\lambda_m^* +\sum_{k \in K_y} (\bar a_{k,*}x-\bar b_k)/y_k  = - 1,\\
\implies & \sum_{k \in K_y} (\bar a_{k,*}x-\bar b_k)/y_k > 0.
\end{alignat*}
Then there must exist some $\hat k \in K_y$ such that $\bar a_{\hat k,*}x-\bar b_{\hat k}<0$.  However, there also exists  $\hat m\in M$ such that $\bar a_{\hat m,*}x-\bar b_{\hat m}<0$, leaving only $n-2$ constraints that can be binding among (\ref{eq:sys1}).  So there are no extreme points in this case.\\

We conclude that the extreme points of $\mbox{cl}(W)$ are the extreme points of $V \cap P'$, which by construction of $V$ are in $\mbox{bd}(C)$.  Furthermore, (\ref{eq:sys3}) is binding at all extreme points, so none of the extreme points are in $W$.\\ 

\noindent\emph{Extreme rays of $\mbox{cl}(W)$:}\\
The extreme rays are determined by $n-1$ linearly independent binding constraints among the following system (with all remaining constraints satisfied), which describes $\mbox{rec}(\mbox{cl}(W))$:
\begin{subequations}
\begin{alignat}{2}
&\bar Ad \leq 0 \label{eq:sys4}\\
&\sum_{m \in M} \bar a_{m,*}d/\lambda_m^*  \leq 0 \label{eq:sys5}\\
&\sum_{m \in M} \bar a_{m,*}d/\lambda_m^* +\sum_{k \in K_y} \bar a_{k,*}d/y_k  \geq 0.\label{eq:sys6}
\end{alignat}
\end{subequations}

Let $\hat d$ be an extreme ray of $W$. We shall characterize when $\hat d \in \mbox{rec}(C)$. If $\hat d \in \mbox{cone}(\{r^{j}|j \in \bar M\})$, by construction of $\bar M$ we have $\hat d \in \mbox{rec}(C)$. Now suppose instead that $\hat d$ is outside of said cone. This implies that there exists some $\hat m \in M$ such that $\bar a_{\hat m,*}\hat d<0$, and so (\ref{eq:sys5}) is not binding.  Then for (\ref{eq:sys6}) to be satisfied, we require some $\hat k \in K_y$ such that $\bar a_{\hat k,*}\hat d<0$. This leaves 2 constraints (indexed by $\hat m, \hat k$) nonbinding among (\ref{eq:sys4}) and so the remaining $n-2$ constraints, together with (\ref{eq:sys6}), are binding. 
From (\ref{eq:sys4}) we have that $\hat d$ is in the cone generated by $r^{\hat m}$ and $r^{\hat k}$. Recalling from \Cref{sec:classicintcut} that such rays are given by the columns of $-\bar A^{-1}$, we may write 
$\hat d=-\beta_{\hat m}\bar A^{-1}e_{\hat m} -\beta_{\hat k} \bar A^{-1}e_{\hat k}$, for some $\beta_{\hat m},\beta_{\hat k} \geq 0,$ with $e_i$ the $i$-th canonical vector. Since \eqref{eq:sys6} is binding we have:
\[\bar a_{\hat m,*}\hat d/\lambda_{\hat m}^* + \bar a_{\hat k,*}\hat d/y_{\hat k} =0,\]
\[\implies \bar a_{\hat m,*}(-\beta_{\hat m}\bar A^{-1}e_{\hat m} -\beta_{\hat k}\bar A^{-1}e_{\hat k})/\lambda_{\hat m}^* + \bar a_{\hat k,*}(-\beta_{\hat m}\bar A^{-1}e_{\hat m} -\beta_{\hat k}\bar A^{-1}e_{\hat k})/y_{\hat k} =0.\]
Now from $\bar A\bar A^{-1}=I$ we have $\bar a_{\hat m,*}\bar A^{-1} = e_{\hat m}^T, \bar a_{\hat k,*}\bar A^{-1} = e_{\hat k}^T$, and so
\[-\beta_{\hat m}  =\beta_{\hat k}\lambda_{\hat m}^*/y_{\hat k} \quad \implies \hat d=\beta_{\hat k}(r^{\hat k}-\frac{\lambda_{\hat m}^*}{y_{\hat k}} r^{\hat m}).\]


Suppose $y\leq y^*$, and so $K_y \subseteq K$.  Then by construction of $y^*$, i.e. (\ref{eq:strength}), we have $\hat d \in \mbox{rec}(C)$ (consider e.g. $\beta_{\hat k}=|y_{\hat k}|$). 

Now suppose there exists some $\bar k \in K_y$ such that $y_{\bar k} > y_{\bar k}^*$. Then there must also exist some $\bar m$ such that $\lambda^*_{\bar m}r^{\bar m} - y_{\bar k} r^{\bar k} \notin \mbox{rec}(C)$. We can easily check that $\hat{d} = \lambda^*_{\bar m}r^{\bar m} - y_{\bar k} r^{\bar k}$ satisfies \eqref{eq:sys4}-\eqref{eq:sys6}. Therefore, we have at least one extreme ray $\hat d$ outside of $\mbox{rec}(C)$.\\


\noindent\emph{Characterization of $W$:}\\
We have shown that the extreme rays of $\mbox{cl}(W)$ are in $\mbox{rec}(C)$ iff $y\leq y^*$.  If there is an extreme ray of $\mbox{cl}(W)$ outside $\mbox{rec}(C)$ then, since $C$ is closed and convex, $W\not\subseteq \mbox{int}(C)$.  Hence the contrapositive is proved ($\implies$).

Suppose $y\leq y^*$.  The extreme points of $\mbox{cl}(W)$ are the finite intersection points of $P'$ with $C$.  Thus every point in $\mbox{conv}(\mbox{ext}(\mbox{cl}(W)))$ may be written as $\bar x + x_e$ for some $x_e \in \mbox{conv}(\{\lambda_m^*r^m | m\in M\})$. Let $w$ be a point in $W$.  By convexity we may write $w$ in terms of the extreme points and rays of $\mbox{cl}(W)$: $w=\bar x + x_e + d_r + \alpha d_k$, for some $d_r \in \mbox{rec}(W)\subseteq \mbox{rec}(C)$, $\alpha\geq 0$, and $d_k\in \mbox{cone}(\{r^{k}|k \in K_y\})\subseteq \mbox{rec}(C)$. Now by definition of $W$ we have that $w$ strictly satisfies \Cref{eq:sys3}, and so $\alpha d_k>0$. Furthermore, by construction of $y^*$, for $\gamma \geq \max_{k\in K_y}\{-y_k^*\}$ we have that $\lambda_m^*r^m + \gamma d_k \in \mbox{rec}(C) \, \forall m\in M$ and thus $x_e + \gamma d_k+d_r \in \mbox{rec}(C)$.  Since $\bar x \in \mbox{int}(C)$, we have $x(\gamma):=\bar x + x_e + \gamma d_k + d_r \in \mbox{int}(C)$. So $w$ lies between $\bar x + x_e+d_r\in C$ and the interior point $x(\gamma)\in\mbox{int}(C)$, so $w\in\mbox{int}(C)$. Since our choice of $w$ is arbitrary, we have $W \subseteq \mbox{int}(C)$.  Hence the converse is true ($\impliedby$). 
 \qed
\end{proof}

It can also be shown (formal proofs are omitted for brevity) that if $|M|=1$, then $V_y \cap P' = \mbox{conv}(P'\setminus \mbox{int}(C))$. When $|M|>1$, more than one cut is needed, in general, to describe this convex hull.

\subsection{Summary of Intersection Cut Generation}
\label{subsec:applyint}
Summarizing the above discussion, an intersection cut for $P$ requires:

\begin{enumerate}
  \item A simplicial conic relaxation $P'\supseteq P$ with some apex $\bar x \notin S$.
  \item An $S$-free set $C$ containing $\bar x$ in its interior.
  \item For each extreme ray of $P'$, either the intersection with the boundary of $C$, or else proof that the ray is contained entirely in $C$.
  \item In case a ray in contained entirely in $C$, access to $\mbox{rec}(C)$ is needed for the strengthening procedure.
\end{enumerate}

Step 1 is satisfied if one selects $n$ linearly independent inequalities from the description of $P$, i.e. a basis of $P$. A standard approach is to select an optimal basis obtained by solving the corresponding LP relaxation over $P$. This is not necessarily the best choice: for example, infeasible basic solutions can also be used \cite{balas2008optimizing,fischetti2007optimizing}. 
Step 2, in the case of polynomial optimization, is the focus of \Cref{sec:poly}. 
Step 3 and 4, in the case of polynomial optimization, is addressed in \Cref{sec:polycuts}.  Note that finding an intersection point precisely on the boundary is not necessary.  A simple way in practice to ensure numerical `safety' of the cut is to use a point between $\bar x$ and the intersection point with $C$. Computing such a point is computationally straightforward provided one can quickly determine membership in $C$.  

\section{Outer-Product-Free Sets}
\label{sec:poly}
We now turn to polynomial optimization ($\mathbf{PO}$), with the aim of generating strong polyhedral relaxations using the strengthened intersection cut framework described above. Our approach to $\mathbf{PO}$ leverages the moment/sum-of-squares approach to polynomial optimization (see \cite{lasserre2001global,laurent2009sums}) from where a definition of the feasible set as $S\cap P$ is naturally obtained. 

Let $m_r=[1,x_1,\ldots ,x_n,\allowbreak x_1x_2,...,\allowbreak x_n^2,\ldots,x_n^r]$ be a vector of all monomials up to degree $r$.  Any polynomial may be written in the form $p_i(x) = m_r^T A_i m_r$ (provided $r$ is sufficiently large), where $A_i$ is a symmetric matrix derived from coefficients of $p_i$.  
We can apply this transformation to \textbf{PO} to obtain a lifted representation $\mathbf{LPO}$:
\begin{subequations}
\begin{alignat}{2}
\min \ & \langle A_0,X\rangle \nonumber\\
(\mathbf{LPO})\ \text{s.t. }&\langle A_i,X\rangle \leq b_i, &\ i=1,...,m, \label{eq:poly}\\
&X=m_rm_r^T. \label{eq:outer}
\end{alignat}
\end{subequations}

Denote $n_r:= \binom{n+r}{r}$, i.e. the length of $m_r$. Here $A_i\in \mathbb{S}^{n_r\times n_r}$ are symmetric real matrices of data, and $X\in \mathbb{S}^{n_r\times n_r}$ is a symmetric real matrix of decision variables. The problem has linear objective function, linear constraints~(\ref{eq:poly}), and nonlinear constraints~(\ref{eq:outer}). One can replace the moment matrix condition $X=m_rm_r^T$ with the equivalent conditions of $X\succeq 0$, $\mbox{rank}(X)=1$ and linear consistency constraints enforcing that entries from $X$ representing the same monomial terms are equal. 

Dropping the nonconvex rank one constraint yields the standard SDP relaxation \cite{shor_quadratic_1987}.  The relaxation is said to be exact when there is an optimal solution where $\mbox{rank}(X)=1$ since the solution can be factorized to obtain an optimal solution vector for $\mathbf{PO}$. In special cases (e.g. \cite{lovasz1991cones,lasserre2001global,laurent2009sums}) the relaxation is guaranteed to be exact for sufficiently large $r$.  

Note that (as presented) there is a combinatorial explosion in the size of \textbf{LPO} (hence the size of any associated relaxation) with respect to $r$. This is not a critical issue for our purposes, as several remedies are available that can accommodate our cuts such as: projection, partial lifting, and lower-degree reformulation with auxiliary variables. Such procedures have associated tradeoffs between relaxation quality, cut quality, size, and speed, and we leave detailed exploration of this outside the scope of the paper in order to focus on cut generation for a given choice of $r$.

The feasible region of \textbf{LPO} has a natural description as an intersection of a polyhedron $P_{OP}$, that corresponds to linear constraints~(\ref{eq:poly}) together with consistency constraints, and the following closed set,
\[S_{OP}:=\{X \in \mathbb{S}^{n_r\times n_r}: X=xx^T, x\in \mathbb{R}^{n_r}\}.\] We shall refer to matrices in $S_{OP}$ as (real) symmetric outer-products (with nonnegative diagonal). Accordingly, we shall study sets that are \emph{outer-product-free} (OPF): closed, convex sets in $\mathbb{S}^{n_r\times n_r}$ with interiors that do not intersect with $S_{OP}$. 


In what follows, we suppose we have an extreme point $\bar X \in P_{OP}$ with spectral decomposition $\bar X := \sum_{i=1}^{n_r} \lambda_i d_id_i^T$ and ordering $\lambda_1\geq ... \geq \lambda_{n_r}$. We seek to separate $\bar X$ if it is not in $S_{OP}$. 

\subsection{Oracle-Based Outer-Product-Free Sets}
\label{sec:ellipsoids}
Recall from \Cref{sec:ballcut} that our oracle-based cut requires the distance to $S$, 
which in the case of $\mathbf{LPO}$ corresponds to the distance to the 
nearest positive semidefinite matrix with rank at most one.  This distance can be obtained as a special case of the following positive semidefinite matrix approximation problem, given an integer $q > 0$:
\begin{equation}
(\mathbf{PMA})\ \min_Y \ \{ \|\bar X-Y\|  \, : \, \text{rank}(Y)\leq q,\, Y\succeq 0\}.
\end{equation}
Here $\|\cdot\|$ is a unitarily invariant matrix norm such as the Frobenius norm, $\|\cdot\|_F$. Dax \cite{dax2014low} proves the following:

\begin{theorem} [Dax's Theorem]
\label{thm:dax}
Let $k$ be the number of nonnegative eigenvalues of $\bar X$. For $1 \leq q\leq n-1$, an optimal solution to \textbf{PMA} is given by $Y = \sum_{i=1}^{\min\{k,q\}}\lambda_id_id_i^T$.  
\end{theorem}

This can be considered an extension of an earlier result by Higham \cite{higham1988computing} for $q=n$.  When $\bar X$ is not negative semidefinite, the solution from Dax's theorem coincides with the Eckart-Young-Mirsky \cite{mirsky1960symmetric,eckart1936approximation} solution to \textbf{PMA} without the positive semidefinite constraint.  The optimal positive semidefinite approximant allows us to construct an outer-product-free ball:

\[\mathcal{B}_{\text{oracle}}(\bar X):= \left\{\begin{array}{ll}
\mathcal{B}(\bar X,\|\bar X\|_F ), & \text{if} \ \bar X \text{ is NSD,}\\
\mathcal{B}(\bar X,\|\sum_{i=2}^n\lambda_id_id_i^T\|_F), & \text{otherwise.}\end{array}\right.\]

\begin{corollary}
\label{cor:opball}
$\mathcal{B}_{\textnormal{oracle}}(\bar X)$ is outer-product-free. 
\end{corollary}
\begin{proof}
Setting $q=1$ in Dax's Theorem, we see that the nearest symmetric outer product is either $\lambda_1d_1d_1^T$ if $\lambda_1>0$, or else the zeros matrix. \qed
\end{proof}

In the generic construction the oracle ball is centered around $\bar X$ since no further structure is assumed upon $S$ when using an oracle. However, for \textbf{LPO} we can in certain cases use a simple geometric construction to obtain a larger ball containing the original one, as follows.
Consider a ball $\mathcal{B}(X, r)$.  Let $s > 0$ and suppose $Q$ is in the
  boundary of the ball.  We construct the ``shifted'' ball $\mathcal{B}(Q+(s/r)(X-Q),s)$.   This ball has radius $s$ and its center is located on the ray through $X$ emanating
  from $Q$.

\begin{remark}
\label{rem:ball}
If $s>r$ then the shifted ball contains the original ball. Algebraically we may write that for any $s>r$ we have 
\[\mathcal{B}((s/r)X+(1-s/r)Q,s)\supset \mathcal{B}(X,r) \ \forall Q \in\mathbb{S}^{n\times n},\]
or $\mathcal{B}(Q+(s/r)(X-Q),s)\supset \mathcal{B}(X,r) \ \forall Q$ in the boundary of $\mathcal{B}(X,r)$. \end{remark}

Hence we can design a shifted oracle ball by choosing a point on the  boundary of  $\mathcal{B}_\text{oracle}$ and proceeding accordingly.  Let us use the nearest symmetric outer product as the boundary point in our construction:

\[\mathcal{B}_{\text{shift}}(\bar X,s):= \left\{\begin{array}{ll}
\mathcal{B}(s\bar X/\|\bar X\|_F,s), & \text{if} \ \bar X \text{ is NSD,}\\
\mathcal{B}\left(\lambda_1d_1d_1^T \, +\, \frac{s}{\|\bar X-\lambda_1d_1d_1^T\|_F}(\bar X - \lambda_1d_1d_1^T)\, , \, s\right), & \text{otherwise.}\end{array}\right.\]

\begin{proposition}
\label{prop:shiftprop}  
Suppose $\bar X \notin S_{OP}$.  If $\lambda_2\leq 0$ then $\mathcal{B}_{\textnormal{shift}}(\bar X,\|\bar X\|_F+\epsilon)$ is outer-product-free and strictly contains $\mathcal{B}_{\textnormal{oracle}}(\bar X)$ for $\epsilon>0$. If $0<\lambda_2<\lambda_1$, then for $ \left\|\sum_{i=2}^n\lambda_id_id_i^T\right\|_F<s \leq  \frac{\lambda_1}{\lambda_2} \left\|\sum_{i=2}^n\lambda_id_id_i^T\right\|_F$, $\mathcal{B}_{\textnormal{shift}}(\bar X,s)$ is outer-product-free and strictly contains $\mathcal{B}_{\textnormal{oracle}}(\bar X)$.  
\end{proposition}
\begin{proof}
Strict containment is assured by construction, so it suffices to show that $\mathcal{B}_\text{shift}$ is outer-product-free. First suppose $\bar X$ is negative semidefinite.  Then,
\begin{alignat*}{2}
&\mathcal{B}_\text{shift}(\bar X,\|\bar X\|_F+\epsilon) &\ =\ &\mathcal{B}((\|\bar X\|_F+\epsilon)\bar X/\|\bar X\|_F,\|\bar X\|_F+\epsilon),\\
&&\ =\ &\mathcal{B}\left((1+ \frac{\epsilon}{\|\bar X\|_F})\bar X,\|\bar X\|_F+\epsilon \right).
\end{alignat*}
The matrix $(1+\epsilon/\|\bar X\|_F)\bar X$ is negative semidefinite due to our negative semidefinite assumption on $\bar X$, so from Dax's theorem we know the nearest outer product is the all zeros matrix.  Hence for $\mathcal{B}_{\text{shift}}$ to be outer-product-free, its radius can be no more than the Frobenius norm of its center, $(1+\epsilon/\|\bar X\|_F)\bar X$, which by observation is indeed the case.

Now suppose $\bar X$ has at least one positive eigenvalue. Then for $s > 0$
\begin{alignat*}{2}
&\mathcal{B}_\text{shift}(\bar X,s) &\ =\ &\mathcal{B}\left(\lambda_1d_1d_1^T+ \frac{s}{\|\bar X-\lambda_1d_1d_1^T\|_F}(\bar X-\lambda_1d_1d_1^T),s \right),\\
&&\ =\ &\mathcal{B}\left(\lambda_1d_1d_1^T+\frac{s}{\|\sum_{i=2}^n\lambda_id_id_i^T\|_F}\sum_{i=2}^n\lambda_id_id_i^T,s\right).
\end{alignat*}  

If $\lambda_2\leq 0$ then the nearest outer product to the center of the
shifted ball, by Dax's theorem, is $\lambda_1d_1d_1^T$.  Thus, the maximum radius
of an outer-product-free ball centered at
  $$\lambda_1d_1d_1^T+ \frac{s}{\|\sum_{i=2}^n\lambda_id_id_i^T\|_F}\sum_{i=2}^n\lambda_id_id_i^T \ $$ 
  is $\|(s/\|\sum_{i=2}^n\lambda_id_id_i^T\|_F)\sum_{i=2}^n\lambda_id_id_i^T\|_F = s$, and so $\mathcal{B}_\text{shift}(\bar X, \|\bar X\|_F+\epsilon)$ is outer-product-free for all $\epsilon>0$.

If $0<\lambda_2<\lambda_1$, then again by Dax's theorem the nearest outer product to the center of the shifted ball is $\lambda_1d_1d_1^T$ iff 
\begin{alignat*}{2}
&\lambda_1 \, \geq \, \frac{s}{\|\sum_{i=2}^n\lambda_id_id_i^T\|_F}\lambda_2, \quad \text{i.e, iff} \quad s \, \leq \,  \frac{\lambda_1}{\lambda_2} \left\|\sum_{i=2}^n\lambda_id_id_i^T\right\|_F.
\end{alignat*}
This gives us a maximum radius of 
\[\left\| \frac{s}{\|\sum_{i=2}^n\lambda_id_id_i^T\|_F}\sum_{i=2}^n\lambda_id_id_i^T\right\|_F = s.\] \qed
\end{proof}

These results provide our first outer-product-free sets. Nonetheless, there is no guarantee they will be maximal. We now turn to characterizing and finding maximal outer-product-free sets.

\subsection{Maximal Outer-Product-Free Sets}
\label{subsec:maxopf2}

\subsubsection{General properties of maximal outer-product-free sets}

\begin{lemma}
\label{lem:intrayintpt}
Let $C$ be a full-dimensional convex set.  Every interior ray of $\mbox{cone}(C)$ emanating from the origin passes through the interior of $C$.
\end{lemma}
\begin{proof}
Consider an interior ray $r$ of $\mbox{cone}(C)$ emanating from the origin and suppose it contains a point $v \in C$.  If $r$ intersects the interior of $C$ we are done, hence assume it does not.  Then there exists a
hyperplane $H$ containing $r$ which supports $C$ at $v$.  Since $H$ contains $r$, it contains the origin.  Thus $H$ supports $\mbox{cone}(C)$ at $r$, which contradicts the assumption that $r$ is interior to $\mbox{cone}(C)$. \qed
\end{proof}

The following Theorem and Corollary provide the first building blocks towards maximality.
\begin{theorem}
\label{thm:opfcone}
Let $C\subset\mathbb{S}^{n_r\times n_r}$ be a full-dimensional outer-product-free set.  Then $\mbox{clcone}(C)$ is outer-product-free.
\end{theorem}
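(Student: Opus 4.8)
# Proof Proposal for Theorem~\ref{thm:opfcone}

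The plan is to argue by contradiction: suppose $\mathrm{clcone}(C)$ contains an outer product $vv^T$ in its interior, and derive a contradiction with the hypothesis that $C$ itself is outer-product-free and full-dimensional. The first step is to reduce to the case where the offending outer product lies in the \emph{relative interior} of the cone and is nonzero — the zero matrix is an outer product ($0 = 0\cdot 0^T$), but since $C$ is full-dimensional, $\mathrm{clcone}(C)$ is full-dimensional, so its interior is nonempty and genuinely open in $\mathbb{S}^{n\times n}$. If $vv^T \in \mathrm{int}(\mathrm{clcone}(C))$, then the open ray $\{t\, vv^T : t > 0\}$ is contained in $\mathrm{int}(\mathrm{cone}(C))$ as well (using that $\mathrm{cone}(C)$ and $\mathrm{clcone}(C)$ have the same interior, a standard convexity fact). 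Note that every point $t\,vv^T$ on this ray is again an outer product, namely $(\sqrt{t}\,v)(\sqrt{t}\,v)^T$ — this is the crucial feature of the outer-product set that makes the conification argument work, and it is exactly why the theorem is true for outer products but would fail for a generic $S$.

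Next I would invoke Lemma~\ref{lem:intrayintpt}: a ray emanating from the origin contained in $\mathrm{cone}(C)$ either meets $\mathrm{int}(C)$ or lies in $\mathrm{bd}(\mathrm{cone}(C))$. Since our ray through $vv^T$ lies in $\mathrm{int}(\mathrm{cone}(C))$, it cannot be in the boundary of the cone, so it must intersect $\mathrm{int}(C)$. That intersection point is of the form $t_0\, vv^T$ for some $t_0 > 0$, which — by the remark in the previous paragraph — is itself an outer product. Hence $\mathrm{int}(C)$ contains an outer product, contradicting the assumption that $C$ is outer-product-free. This establishes that $\mathrm{int}(\mathrm{clcone}(C))$ contains no outer product; combined with the obvious facts that $\mathrm{clcone}(C)$ is closed and convex (closure of a convex cone is a convex cone), we conclude $\mathrm{clcone}(C)$ is outer-product-free.

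The main obstacle, and the point requiring the most care, is the interface between Lemma~\ref{lem:intrayintpt} (stated for $\mathrm{cone}(C)$, possibly non-closed) and the closure operation in the statement (which concerns $\mathrm{clcone}(C)$). I need the identity $\mathrm{int}(\mathrm{cone}(C)) = \mathrm{int}(\mathrm{clcone}(C))$ and the fact that $C$ full-dimensional forces $\mathrm{cone}(C)$ to be full-dimensional, so that "interior" is the topological interior in $\mathbb{S}^{n\times n}$ and Lemma~\ref{lem:intrayintpt} applies with its full-dimensionality hypothesis satisfied. A secondary subtlety: one should check the ray through $vv^T$ actually emanates from the origin and is genuinely a ray (i.e. $v \neq 0$, handled by the $vv^T \neq 0$ reduction, since $vv^T = 0 \iff v = 0$). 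Once these topological bookkeeping points are pinned down, the argument is short, with Lemma~\ref{lem:intrayintpt} doing the geometric heavy lifting.
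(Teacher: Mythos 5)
Your proposal is correct and follows essentially the same route as the paper's proof: argue by contradiction, observe that the ray through a nonzero outer product consists entirely of outer products, use $\mathrm{int}(\mathrm{cone}(C))=\mathrm{int}(\mathrm{clcone}(C))$, and invoke Lemma~\ref{lem:intrayintpt} to force an outer product into $\mathrm{int}(C)$. The only difference is cosmetic: you defer the $v=0$ case to a brief reduction (if $0$ were interior the cone would be all of $\mathbb{S}^{n\times n}$, so a nonzero outer product is also interior), whereas the paper disposes of it directly.
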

\begin{proof}
Suppose $\mbox{clcone}(C)$ is not outer-product-free; since it is closed and convex, then by definition of outer-product-free sets there must exist $d \in \mathbb{R}^{n_r}$ such that $dd^T$ is in its interior.  If $d$ is the zeros vector, then $\vec 0 \in \mbox{int}(\mbox{clcone}(C)) \implies \vec 0 \in \mbox{int}(C)$, which contradicts the condition that $C$ be outer-product-free.  Otherwise the ray $r^0$ emanating from the origin with nonzero direction $dd^T$ is entirely contained in and hence is an interior ray of $\mbox{clcone}(C)$. By convexity, the interior of $\mbox{cone}(C)$ is the same as the interior of its closure, so $r^0$ is also an interior ray of $\mbox{cone}(C)$.  By \Cref{lem:intrayintpt}, $r^0$ passes through the interior of $C$.  But every point along $r^0$ is a symmetric outer-product, which again implies that $C$ is not outer-product-free. \qed
\end{proof}

\begin{corollary}
\label{cor:maxcone}
Every full-dimensional maximal outer-product-free set is a convex cone.
\end{corollary}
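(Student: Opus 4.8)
The statement to prove is Corollary~\ref{cor:maxcone}: every full-dimensional maximal outer-product-free set is a convex cone. The plan is to derive this as an essentially immediate consequence of Theorem~\ref{thm:opfcone}, combined with the maximality hypothesis and the basic fact that $S$-free sets are by definition convex.

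First I would fix a full-dimensional maximal outer-product-free set $C \subset \mathbb{S}^{n\times n}$. By definition $C$ is convex, and being outer-product-free it is in particular nonempty and full-dimensional. Next I would invoke Theorem~\ref{thm:opfcone} to conclude that $\mbox{clcone}(C)$ is also outer-product-free. I would then observe that $\mbox{clcone}(C)$ is convex (the closure of the conic hull of a convex set is convex) and that it contains $C$, since every point of a convex set lies in its own conic hull. So $\mbox{clcone}(C)$ is an outer-product-free set containing $C$.

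Now the maximality of $C$ enters: maximality says that no outer-product-free set strictly contains $C$, i.e. $V \not\supsetneq C$ for every outer-product-free $V$. Since $\mbox{clcone}(C)$ is such a $V$ and $\mbox{clcone}(C) \supseteq C$, we must have $\mbox{clcone}(C) = C$. Therefore $C$ equals its own closed conic hull, which makes $C$ a closed convex cone; in particular it is a convex cone, as claimed. One small point I would want to state carefully is that $\mbox{clcone}(C)$ is full-dimensional whenever $C$ is (so that Theorem~\ref{thm:opfcone} genuinely applies and the conclusion is about a full-dimensional object), but this is automatic since $\mbox{clcone}(C) \supseteq C$ and dimension is monotone under inclusion.

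I do not anticipate a serious obstacle here — the real content has already been extracted into Theorem~\ref{thm:opfcone} and Lemmas~\ref{lem:smalllem} and \ref{lem:intrayintpt}. The only thing to be slightly careful about is the direction of the maximality argument: one needs that $\mbox{clcone}(C)$ is a legitimate competitor in the definition of maximality (it is convex and outer-product-free) and that it contains $C$, so that equality — rather than merely non-strict-containment in the wrong direction — is forced. Once that is pinned down, the corollary follows in a couple of lines.
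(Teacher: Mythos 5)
Your proof is correct and is exactly the argument the paper intends — its own proof of this corollary is just the one-line remark that it follows directly from \Cref{thm:opfcone}, with the maximality step ($\mbox{clcone}(C) \supseteq C$ is outer-product-free, hence equals $C$) left implicit. You have simply spelled out that implicit step, correctly.
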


\begin{remark}
The characterization in \Cref{thm:opfcone} allows us to expand the oracle-based outer-product-free sets in Section \ref{sec:ellipsoids}. Provided $s$ is chosen as prescribed by \Cref{prop:shiftprop}, the closure of the conic hull, $\mbox{clcone}(\mathcal{B}_{\text{shift}}(\bar X,s))$, is also outer-product-free. In the proof of \Cref{prop:shiftprop} we showed that if $\bar X$ is NSD, then for $\epsilon>0$ the outer-product-free set $\mathcal{B}_{\text{shift}}(\bar X,\|\bar X\|_F+\epsilon)$ contains the origin (the all zeros matrix) in its boundary. In this case the closure of the conic hull is a halfspace tangent to the ball at the origin. Otherwise if $\bar X$ is not NSD then we must consider two cases: either $\lambda_2$ is nonpositive or it is positive.  If $\lambda_2$ is nonpositive, then we can set $\epsilon$ to any large number, and so we have a shifted ball with arbitrarily large radius that is tangent to $\lambda_1d_1d_1^T$.  Thus in the limit as $\epsilon$ approaches infinity we obtain an outer-product-free halfspace that is tangent to $\lambda_1d_1d_1^T$ with normal parallel to the vector from $\bar X$ to $\lambda_1d_1d_1^T$.  If $\lambda_2$ is positive, then for $ \left\|\sum_{i=2}^n\lambda_id_id_i^T\right\|_F<s \leq  \frac{\lambda_1}{\lambda_2} \left\|\sum_{i=2}^n\lambda_id_id_i^T\right\|_F$, the outer-product-free ball $\mathcal{B}_{\textnormal{shift}}(\bar X,s)$ does not contain the origin. In this case the conic hull is equal to its closure \cite[Prop 1.4.7]{hiriart2012fundamentals}.
\end{remark}

We can also consider maximal outer-product-free sets via their supporting halfspaces.

\begin{definition}
A \emph{supporting halfspace} of a closed, convex set $S$ contains $S$ and its boundary is a supporting hyperplane of $S$.
\end{definition}

\begin{corollary}
\label{cor:halfspaces}
Let $C$ be a full-dimensional maximal outer-product-free set.  Every supporting halfspace of $C$ is of the form $\langle A, X \rangle \geq 0$ for some $A \in\mathbb{S}^{n_r\times n_r}$.
\end{corollary}
\begin{proof}
 From \Cref{cor:maxcone} we have that $C$ is a convex cone, and so the result follows from classic convex analysis \cite [Cor 11.7.3]{rockafellar1970convex}. \qed
\end{proof}

From \Cref{cor:halfspaces} we may thus characterize a maximal outer-product-free set as $C=\{X\in\mathbb{S}^{n_r\times n_r}| \langle A_i, X\rangle \geq 0 \ \forall i \in I\}$, where $I$ is an index set that is not necessarily finite.  It will be useful to classify the supporting halfspaces in terms of the coefficient matrix: $A$ is either positive semidefinite, negative semidefinite, or indefinite.  

We are now ready to provide our first explicit family of maximal outer-product-free sets. 
\begin{theorem}
\label{thm:psdhalf}
The halfspace $\langle A, X \rangle \geq 0$ is maximal outer-product-free iff $A$ is negative semidefinite.  
\end{theorem}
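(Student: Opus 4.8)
The statement is an ``iff'', so the plan is to prove two implications. First I would establish that if $A$ is negative semidefinite, then the halfspace $\langle A, X\rangle \ge 0$ is outer-product-free: for any $d \in \mathbb{R}^n$ we have $\langle A, dd^T\rangle = d^T A d \le 0$ by definition of negative semidefiniteness, so no outer product lies in the open halfspace $\langle A, X\rangle > 0$, which is precisely the interior. Convexity of a halfspace is immediate. So such a halfspace is indeed $S$-free; it remains to argue maximality. Since the halfspace is a full-dimensional convex cone, \Cref{cor:maxcone} (or rather the reasoning behind it together with \Cref{lem:halfspaces}) tells us that any outer-product-free set strictly containing it would itself be contained in a maximal one, which would be a cone described by supporting halfspaces of the form $\langle A_i, X\rangle \ge 0$. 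The key point is that a halfspace is maximal among convex sets unless it is strictly contained in another convex set that is not a halfspace; here I would argue directly that any outer-product-free $C' \supsetneq \{\langle A, X\rangle \ge 0\}$ must contain a point $X^*$ with $\langle A, X^*\rangle < 0$ in its interior, and then use that the boundary hyperplane $\langle A, X\rangle = 0$ is the unique supporting hyperplane there, so $\mbox{int}(C')$ would have to contain points arbitrarily close to some $dd^T$ on the boundary $\langle A, dd^T\rangle = 0$ (the outer products densely populate that hyperplane in the relevant sense), forcing an outer product into the interior --- contradiction. The cleanest route is probably: $C'$ convex, contains the halfspace and a point strictly on the far side, hence $C'$ contains an open neighborhood of some boundary point of the halfspace; since outer products $dd^T$ with $d^TAd = 0$ exist on that boundary and a neighborhood of such a point meets $\{X : \langle A, X\rangle = 0\}$, and outer products are dense enough on that hyperplane to land in the neighborhood, we get an interior outer product.

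For the converse, I would prove the contrapositive: if $A$ is not negative semidefinite, then $\langle A, X\rangle \ge 0$ is not maximal outer-product-free. If $A$ is not NSD there exists $v$ with $v^TAv > 0$, so $vv^T$ lies in the open halfspace --- meaning the halfspace is not even outer-product-free in the first place (its interior contains an outer product), hence trivially cannot be a maximal outer-product-free set since it is not outer-product-free at all. This handles the indefinite and positive-semidefinite-but-nonzero cases in one stroke. The only edge case is $A = 0$, but then $\langle A, X\rangle \ge 0$ is all of $\mathbb{S}^{n\times n}$, whose interior contains every outer product, so again it is not outer-product-free. Thus ``not NSD'' $\Rightarrow$ ``not outer-product-free'' $\Rightarrow$ ``not maximal outer-product-free,'' completing the contrapositive.

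The main obstacle I anticipate is the maximality half of the forward direction --- specifically, making rigorous the claim that one cannot enlarge the NSD halfspace while staying outer-product-free. The intuition (a halfspace can only be enlarged to a bigger halfspace or to something non-convex) is standard, but one must exploit the specific geometry: the supporting hyperplane $\{X : d^TAd = 0 \text{ for the corresponding } d\}$ --- more precisely $\{X : \langle A, X\rangle = 0\}$ --- must actually contain outer products $dd^T$ (it does, since $A$ being NSD and nonzero means its kernel is nontrivial or there are isotropic vectors; if $A \prec 0$ strictly there are no nonzero isotropic vectors but then $dd^T$ on the boundary forces $d = 0$, i.e. the origin, and one argues the origin being in the interior of an enlargement still forces a contradiction via a small ball of outer products). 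So I would split maximality into the cases $A \prec 0$ (definite) and $A$ NSD with nontrivial kernel, handling the definite case by noting that enlarging past the boundary hyperplane $\langle A, X\rangle = 0$ puts the origin, and hence a neighborhood of the origin, into the interior, and every neighborhood of the origin in $\mathbb{S}^{n \times n}$ contains outer products $\epsilon dd^T$. This case analysis, while elementary, is where the care is needed; everything else is routine.
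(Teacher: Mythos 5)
Your ``only if'' direction and the outer-product-freeness half of the ``if'' direction match the paper's proof and are fine. The genuine gap is in the maximality argument. Your primary route rests on the claim that outer products ``densely populate'' the hyperplane $\{X : \langle A, X\rangle = 0\}$ in a sense strong enough that a neighbourhood of a boundary point of the halfspace must contain one. This is false: for $A$ negative semidefinite, $d^TAd=0$ forces $Ad=0$, so the outer products lying on that hyperplane are exactly $\{dd^T : d\in\ker(A)\}$, a set of dimension at most $\dim\ker(A)$ sitting inside a hyperplane of dimension $n(n+1)/2-1$ --- nowhere near dense, and a neighbourhood of a generic boundary point contains none of them. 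Moreover, even granting that $\mbox{int}(C')$ contains points arbitrarily close to some $dd^T$ would only place $dd^T$ in the closure of the interior, not in the interior itself, so no contradiction follows.

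Your fallback for the definite case --- that enlarging the halfspace forces the origin into the interior --- is the right idea and is exactly what the paper does, but you assert it rather than prove it, and the case split is unnecessary because that single argument covers every NSD $A$. Concretely: if $\bar C$ is convex and strictly contains $\{\langle A,X\rangle\geq 0\}$, pick $\bar X\in\mbox{int}(\bar C)$ with $\langle A,\bar X\rangle<0$; then $\langle A,-\bar X\rangle>0$, so $-\bar X$ lies in the halfspace and hence in $\bar C$; the zero matrix is the midpoint of $\bar X$ and $-\bar X$ and therefore lies in $\mbox{int}(\bar C)$; but $0=00^T$ is itself an outer product, a contradiction. No density claim, no isotropic vectors, and no separate treatment of $A\prec 0$ are needed. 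Replacing your maximality paragraph with this three-line argument closes the gap.
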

\begin{proof}
Suppose $A$ has a strictly positive eigenvalue, with corresponding eigenvector $d$.  Then  $\langle A, dd^T \rangle > 0$, and so the halfspace is not outer-product-free.

If $A$ is negative semidefinite we have $\langle A, dd^T \rangle = d^TAd \leq 0 \ \forall d \in \mathbb{R}^{n_r}$, so the halfspace is outer-product-free. For maximality, suppose the halfspace is strictly contained in another outer-product-free set $\bar C$. Then there must exist some $\bar X \in \mbox{int}(\bar C)$ such that $\langle A, \bar X \rangle <0$.  However, $\langle A, (-\bar X) \rangle >0$, and so the zeros matrix is interior to $\bar C$ since it lies between $\bar X$ and $-\bar X$.  Thus $\bar C$ cannot be outer-product-free. \qed
\end{proof}

\begin{corollary}
Let $C$ be a full-dimensional maximal outer-product-free set with supporting halfspaces $ \{ \langle A_i, X \rangle \geq 0 \, : \, i \in I \}.$ Here, the set $I$ is possibly infinite. If there exists $i \in I$ such that $A_i$ is negative semidefinite, then $C$ is exactly the halfspace $\langle A_i, X \rangle \geq 0$.
\end{corollary}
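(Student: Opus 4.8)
The plan is to show this is an immediate consequence of \Cref{thm:psdhalf} together with the maximality hypothesis on $C$. Write $H_i := \{X \in \mathbb{S}^{n\times n} : \langle A_i, X\rangle \geq 0\}$. First I would invoke the ``if'' direction of \Cref{thm:psdhalf}: since $A_i$ is negative semidefinite, $\langle A_i, dd^T\rangle = d^TA_id \leq 0$ for every $d$, so no symmetric outer product lies in the open halfspace $\mathrm{int}(H_i) = \{X : \langle A_i, X\rangle > 0\}$; hence $H_i$ is outer-product-free (indeed maximal, but we only need that it is outer-product-free).

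Next I would record the containment $C \subseteq H_i$. By hypothesis $\langle A_i, X\rangle \geq 0$ is a \emph{supporting} halfspace of $C$, and by the definition of a supporting halfspace this means precisely that $H_i \supseteq C$. (If one wants to be careful that $C$ is a closed convex set so that the supporting-halfspace terminology applies: $C$ is convex by definition of $S$-free, it is a cone by \Cref{cor:maxcone}, and it is closed because $\mathrm{cl}(C)$ is again outer-product-free — $\mathrm{int}(\mathrm{cl}(C)) = \mathrm{int}(C)$ for convex $C$ — so maximality forces $C = \mathrm{cl}(C)$.)

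Finally I would combine the two facts with maximality of $C$: we have an outer-product-free set $H_i$ with $C \subseteq H_i$. If the inclusion were strict, $H_i$ would be an outer-product-free set properly containing $C$, contradicting the maximality of $C$. Therefore $C = H_i$, i.e. $C$ is exactly the halfspace $\langle A_i, X\rangle \geq 0$.

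I do not anticipate a genuine obstacle here; the statement is essentially a bookkeeping corollary. The only point requiring a moment's care is the justification that a maximal outer-product-free set is closed (hence the ``supporting halfspace'' language is literally applicable and $C \subseteq H_i$), which is handled by the closure remark above; everything else is a direct appeal to \Cref{thm:psdhalf} and to the definition of maximality.
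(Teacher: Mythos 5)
Your proof is correct and follows the same route as the paper's: the supporting halfspace contains $C$, it is outer-product-free by \Cref{thm:psdhalf}, and maximality of $C$ forces equality. The extra remark about closedness of $C$ is a harmless (and slightly more careful) elaboration that the paper leaves implicit.
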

\begin{proof}
Suppose $C$ is contained in the halfspace $\langle A_i, X \rangle \geq 0$ with $A_i$ NSD.  By \Cref{thm:psdhalf} the halfspace is outer-product-free, and so $C$ is maximal only if it is the supporting halfspace itself. \qed
\end{proof}

\subsubsection{Maximal outer-product-free sets derived from $2\times 2$ submatrices}

Another important family of maximal outer-product-free sets can be obtained using the following characterization of rank-1 PSD matrices by Kocuk, Dey, and Sun \cite{kocuk2017matrix}:

\begin{proposition}[KDS Proposition]
\label{prop:KDS}
A nonzero, Hermitian matrix $X$ is positive semidefinite and has rank one iff all the $2\times 2$ minors of $X$ are zero and the diagonal elements of $X$ are nonnegative.\qed
\end{proposition}

Denote the entries of a $2\times 2$ submatrix of $X$ from some rows $i_1 < i_2$ and columns $j_1< j_2$ as $X_{[[i_1, i_2], [j_1, j_2]]}:= \left[\begin{array}{cc}
a & b\\
c & d\end{array}\right]$.

\begin{lemma}
\label{lemma:opfcones}
Let $\lambda \in \mathbb{R}^2$ with $\| \lambda \|_2 = 1$.  Then each of the following describes an outer-product-free set:
\begin{subequations}\label{eq:22_1}
\begin{alignat}{2}
\lambda_1(a+d)+\lambda_2(b-c) \geq    \|b+c,a-d\|_2, \label{eq:22_1a}\\
\lambda_1(b+c)+\lambda_2(a-d) \geq   \|a+d,b-c\|_2. \label{eq:22_1b}
\end{alignat}
\end{subequations}
\end{lemma}
\begin{proof}
First consider \eqref{eq:22_1a}. From \Cref{prop:KDS} we have the necessary condition for an outer product that $ad = bc$.  Hence the set of symmetric matrices satisfying $ad \geq bc$ contains no outer product in its interior. Note that
\begin{alignat*}{2}
ad \geq bc &\iff (a+d)^2-(a-d)^2 \geq (b+c)^2-(b-c)^2\\
&\iff \|a+d,b-c\|_2 \geq \|b+c,a-d\|_2.
\end{alignat*}

Clearly $  \|a+d,b-c\|_2 \ge \lambda_1(a+d)+\lambda_2(b-c)$, and so \eqref{eq:22_1a} describes
a subset of the set satisfying $ad \geq bc$, thus, it is outer-product-free. \eqref{eq:22_1b} follows in the same way, replacing $a$ for $b$ and $c$ for $d$ in the proof for \eqref{eq:22_1a}. \qed
\end{proof}

The following Theorem provides an extensive list of \emph{maximal} outer-product-free sets that can be obtained from Lemma \ref{lemma:opfcones}.
\begin{theorem}
\label{thm:22thm}
(\ref{eq:22_1a}) describes a \emph{maximal} outer-product-free set if

i) $\lambda_1=1,\lambda_2=0$, and neither $b$ nor $c$ are diagonal entries;

ii) $\lambda_1=0,\lambda_2=1$, and $b$ is a diagonal entry;

iii) $\lambda_1=0,\lambda_2=-1$, and $c$ is a diagonal entry;

iv) $\lambda_1^2+\lambda_2^2=1$, and none of $a,b,c,d$ are diagonal entries.\\

Similarly, (\ref{eq:22_1b}) describes a \emph{maximal} outer-product-free set if

v) $\lambda_1=1,\lambda_2=0$, and either $b$ or $c$ is a diagonal entry;

vi) $\lambda_1=0,\lambda_2=1$, and $a$ but not $d$ is a diagonal entry;

vii) $\lambda_1=0,\lambda_2=-1$, and $d$ but not $a$ is a diagonal entry;

viii) $\lambda_1^2+\lambda_2^2=1$, and none of $a,b,c,d$ are diagonal entries.

\end{theorem}

\begin{proof}
The outer-product-free property is given by \Cref{lemma:opfcones}, so maximality remains. Let $C$ be a set described by (\ref{eq:22_1a}) or (\ref{eq:22_1b}). It suffices to construct, for \emph{every} symmetric matrix $\bar X \not\in C$, $Z\coloneqq zz^T$ such that $Z \in \mbox{int} (\mbox{conv}(C\cup\bar X))$. As $C$ is a cone, this is equivalent to showing $Z-\bar X\in \mbox{int}(C)$, and so $\mbox{conv}(C\cup\bar X)$ is not outer-product-free (irrespective of $\bar X$). This would imply that $C$ cannot be contained in a larger $S$-free set.  

Denote the submatrices of $\bar X, Z$:
\[\bar X_{[[i_1, i_2], [j_1, j_2]]}\coloneqq  \left[\begin{array}{cc}
\bar a & \bar b\\
\bar c & \bar d\end{array}\right], \ Z_{[[i_1, i_2], [j_1, j_2]]}\coloneqq  \left[\begin{array}{cc}
 a_Z &  b_Z\\
c_Z & d_Z\end{array}\right].\]
Furthermore, for convenience let us define the following:
\[
\bar p \coloneqq  (\bar a + \bar d)/2,\, \bar q \coloneqq  (\bar a - \bar d)/2,\,
\bar r \coloneqq  (\bar b + \bar c)/2,\, \bar s \coloneqq  (\bar b - \bar c)/2.\]

\noindent \emph{Construction for (\ref{eq:22_1a})}: Suppose $\bar{X}$ violates (\ref{eq:22_1a}). We propose the following:
\begin{equation} \label{eq:defABCDfirst}
\left[\begin{array}{cc}
 a_Z &  b_Z\\
c_Z & d_Z\end{array}\right] = \left[\begin{array}{cc}
 \bar q + \lambda_1\|\bar q, \bar  r\|_2 & \ \   \bar  r+ \lambda_2\|\bar  q, \bar r\|_2\\
\bar  r - \lambda_2\|\bar  q,\bar r\|_2 & \ \ -\bar q + \lambda_1\|\bar q,\bar r\|_2\end{array}\right],
\end{equation}
\begin{align*}
\Longrightarrow \, \lambda_1(\bar a+\bar d)/2+\lambda_2(\bar b-\bar c)/2 \, &< \, \|(\bar b+\bar c)/2,(\bar a-\bar d)/2\|_2 \\ 
&= \, \lambda_1(a_Z+ d_Z)/2+\lambda_2(b_Z-c_Z)/2
\end{align*}
where the last equality follows from $\lambda_1^2 + \lambda_2^2 = 1$. This implies
\[ \lambda_1((a_Z-\bar a)+ (d_Z- \bar d)) +\lambda_2((b_Z-\bar b)-(c_Z-\bar c)) >  0 \]
and since \( \|(b_Z-\bar b)+(c_Z-\bar c),(a_Z-\bar a)-(d_Z-\bar d)\|_2 = 0 \),
we conclude $Z-\bar{X} \in \mbox{int}(C)$.\\

\noindent \emph{Construction for (\ref{eq:22_1b})}: If $\bar{X}$ violates (\ref{eq:22_1b}), we use the following construction:
\begin{equation}\label{eq:defABCDsecond}
\left[\begin{array}{cc}
 a_Z &  b_Z\\
c_Z & d_Z\end{array}\right] = \left[\begin{array}{cc}
 \bar p + \lambda_2\|\bar p,\bar  s\|_2 & \ \   \bar  s+ \lambda_1\|\bar  p,\bar  s\|_2\\
-\bar  s + \lambda_1\|\bar  p,\bar s\|_2 & \ \ \bar p - \lambda_2\|\bar p,\bar  s\|_2\end{array}\right],
\end{equation}

\begin{align*}
\Longrightarrow \lambda_1(\bar b+\bar c)/2+\lambda_2(\bar a-\bar d)/2  \, & < \, \|(\bar a+\bar d)/2,(\bar b-\bar c)/2\|_2 \\ &= \,  \lambda_1(b_Z+ c_Z)/2+\lambda_2(a_Z-d_Z)/2, \nonumber\\
\Longrightarrow \, \lambda_1((b_Z-\bar b)+(c_Z-\bar c))  &+\lambda_2((a_Z-\bar a)- (d_Z- \bar d))  > 0. \nonumber
\end{align*}
We conclude $Z-\bar X \in \mbox{int}(C)$ as before, since \( \|(a_Z-\bar a)+(d_Z-\bar d),(b_Z-\bar b)-(c_Z-\bar c)\|_2 = 0.\)
It remains to set the other entries of $Z$ and to show it is an outer product.

\begin{claim}
For each condition (i)-(viii), $a_Zd_Z=b_Zc_Z$ and all diagonal elements among $a_Z,b_Z,c_Z,d_Z$ are nonnegative.
\end{claim}
\begin{claimproof}
First consider conditions (i)-(iv). By construction of \eqref{eq:defABCDfirst}:
\begin{alignat*}{2}
&a_Zd_Z\, &=&\, -\bar q^2 + \lambda_1^2\|\bar q,\bar  r\|^2_2 \, = \, \bar r^2-\lambda_2^2\|\bar q,\bar r\|_2^2\,
= \, b_Zc_Z.
\end{alignat*}

The second equality is derived from the following identity:
\begin{alignat*}{2}
&\|\bar q,\bar r\|_2^2 = \bar q^2+\bar r^2
\iff\, -\bar q^2 + \lambda_1^2\|\bar q,\bar r\|_2^2 = \bar r^2-\lambda_2^2\|\bar q,\bar r\|_2^2.
\end{alignat*}
Now we only need to prove that diagonal elements of Z are nonnegative. To see this, first notice that in case (i) only $a_Z$ or $d_Z$ can be diagonal elements, and they are both nonnegative because of $\|\bar q, \bar r\|_2 \geq \max\{|\bar q|,|\bar r|\}$. In case (ii) and (iii), $b_Z$ and $c_Z$, respectively, can be diagonal elements and, again in view of $\|\bar q, \bar r\|_2 \geq \max\{|\bar q|,|\bar r|\}$ these are nonnegative. In case (iv) there are no diagonal elements so there is nothing to prove.

Now, for conditions (v)-(viii), 
\begin{alignat*}{2}
&a_Zd_Z\, &=&\, \bar p^2 - \lambda_2^2\|\bar p,\bar  s\|^2_2 \, = \, -\bar s^2+\lambda_1^2\|\bar p,\bar s\|_2^2 
\, = \, b_Zc_Z.
\end{alignat*}
The second equality is derived from the following identity:
\begin{align*}
\|\bar p,\bar s\|_2^2 = \bar p^2+\bar s^2
& \iff \, -\bar s^2 + \lambda_1^2\|\bar p,\bar s\|_2^2 = \bar p^2-\lambda_2^2\|\bar p,\bar s\|_2^2.
\end{align*}

Nonnegativity of diagonal elements follows from the same argument as before, by using the fact that $\|\bar p, \bar s\|_2 \geq \max\{|\bar p|,|\bar s|\}$.
\end{claimproof}
\vskip .5cm
To maintain symmetry we set $Z_{i_1,j_1}=Z_{j_1,i_1}$, $Z_{i_1,j_2}=Z_{j_2,i_1}, Z_{i_2,j_1}=Z_{j_1,i_2}$, $Z_{i_2,j_2}=Z_{j_2,i_2}$.  Now denote $\ell = [i_1,i_2,j_1,j_2]$. If $a_Z=b_Z=c_Z=d_Z=0$, then we simply set all other entries of $Z$ equal to zero and so $Z$ is the outer product of the vector of zeroes.  Otherwise, consider the following cases.\\

\noindent \emph{Case 1: $\ell$ has 4 unique entries}. Suppose w.l.o.g we have an upper-triangular entry $(i_1<i_2<j_1<j_2)$ and furthermore suppose that $b_Z$ is nonzero.  Set 

\[Z_{\ell}\coloneqq  \left[\begin{array}{cccc}
1 & d_Z/b_Z & a_Z &  b_Z \\
d_Z/b_Z& d_Z^2/b_Z^2& c_Z & d_Z\\
a_Z & c_Z & a_Z^2&a_Zb_Z\\
b_Z & d_Z & a_Zb_Z&b_Z^2
\end{array}\right], z_{\ell} := [\begin{array}{cccc}1&  d_Z/b_Z &  a_Z &  b_Z \end{array}],\]
and all remaining entries of $Z$ (and $z$) to zero. Recall that $a_Zd_Z=b_Zc_Z$, and so $Z=zz^T$. Other orderings of indices or the use of a different nonzero entry is handled by relabeling/rearranging column/row order.\\

\noindent \emph{Case 2: $\ell$ has three unique entries}. Then, exactly one of $a_Z,b_Z,c_Z,d_Z$ is a diagonal entry, and so cases (i)-(iii), (v)-(vii) apply.  For cases (i) and (vi)-(vii), where either  $a_Z$ or $d_Z$ is on the diagonal, by construction $|b_Z|=|c_Z|$. As $a_Zd_Z=b_Zc_Z$, we have $b_Z=c_Z=0$ iff exactly one of $a_Z$ or $d_Z$ is zero.  Likewise, for cases (ii)-(iii) and (v), where either $b_Z$ or $c_Z$ is a diagonal element, then $|a_Z|=|d_Z|$ and so $a_Z=d_Z=0$ iff exactly one of $b_Z$ or $c_Z$ are zero. 

Suppose $a_Z$ is a nonzero diagonal entry.  We propose:
\[Z_{\ell'} \coloneqq \left[\begin{array}{ccc}
a_Z & b_Z &  c_Z \\
b_Z& b_Z^2/a_Z&  d_Z\\
c_Z& d_Z& c_Z^2/a_Z 
\end{array}\right]\]
where $\ell'$ are the unique entries of $\ell$, and all other entries of $Z$ are set to zero. If $a_Z=0$ and is on the diagonal, then we replace $b_Z^2/a_Z$ and $c_Z^2/a_Z$ with $|d_Z|$. If $b_Z,c_Z$ or $d_Z$ is on the diagonal, we use the same construction but with relabeling/rearranging column/row order.\\

\noindent \emph{Case 3: $\ell$ has two unique entries}. All remaining entries of $Z$ are set to zero.\\

In all cases, all diagonal entries of $Z$ are nonnegative and all $2\times 2$ minors are zero; by \Cref{prop:KDS}, $Z$ is an outer product. \qed
\end{proof}

Additionally, we show a choice of $\lambda$ in \Cref{lemma:opfcones} which ensures separation.

\begin{lemma} \label{lemma:22always}
Consider a matrix $\bar X$ and a $2\times 2$ submatrix
\[\bar X_{[[i_1, i_2], [j_1, j_2]]}\coloneqq  \left[\begin{array}{cc}
\bar a & \bar b\\
\bar c & \bar d\end{array}\right] \]
such that $\bar{a} \bar{d} \neq \bar{b} \bar{c}$. Then,
\begin{itemize}
\item If $\bar{a} \bar{d} > \bar{b} \bar{c}$, $\bar{X}$ is in the interior of the cone defined by \eqref{eq:22_1a} with
\begin{subequations}
\begin{equation} \label{eq:firstlambdas}
\lambda_1  = \frac{\bar{a} + \bar{d}}{\| \bar{a} + \bar{d} , \bar{b} - \bar{c} \|_2}, \quad \lambda_2  = \frac{\bar{b} - \bar{c}}{\| \bar{a} + \bar{d} , \bar{b} - \bar{c} \|_2}.
\end{equation}
\item If $\bar{a} \bar{d} < \bar{b} \bar{c}$, $\bar{X}$ is in the interior of the cone defined by \eqref{eq:22_1b} with
\begin{equation} \label{eq:secondlambdas}
\lambda_1  = \frac{\bar{b} + \bar{c}}{\| \bar{b} + \bar{c} , \bar{a} - \bar{d} \|_2}, \quad \lambda_2  = \frac{\bar{a} - \bar{d}}{\| \bar{b} + \bar{c} , \bar{a} - \bar{d} \|_2}. 
\end{equation}
\end{subequations}
\end{itemize}
\end{lemma}
\begin{proof}
Let us first consider the case $\bar{a} \bar{d} > \bar{b} \bar{c}$, and $\lambda$ defined as in \eqref{eq:firstlambdas}, then
\[
    \lambda_1(\bar{a}+ \bar{d})+\lambda_2(\bar{b}-\bar{c}) - \|\bar{b}+\bar{c},\bar{a}- \bar{d}\|_2  = \| \bar{a} + \bar{d} , \bar{b} - \bar{c} \|_2 - \|\bar{b}+\bar{c},\bar{a}- \bar{d}\|_2,
\]
and since 
\[
\| \bar{a} + \bar{d} , \bar{b} - \bar{c} \|^2_2 - \|\bar{b}+\bar{c},\bar{a}- \bar{d}\|^2_2  = 4 \bar{a} \bar{d} - 4 \bar{b} \bar{c} > 0,
\]
we conclude $\bar{X}$ is in the interior of the cone defined by \eqref{eq:22_1a}. When $\bar{a} \bar{d} < \bar{b} \bar{c}$ we obtain the second case which involves \eqref{eq:22_1b} the same way. 
\qed
\end{proof}

\begin{remark}
Note that in the proof above the $\lambda$ choices are, in a sense, the best possible. The following optimization problem 
\[ \max\{ \lambda_1 x + \lambda_2 y \, : \, \lambda_1^2 + \lambda_2^2 = 1 \} \]
has the optimal solution \(\lambda_1 = x/\|x,y\|_2,\, \lambda_2 = y/\|x,y\|_2\). This proves that $\lambda$ defined in \eqref{eq:firstlambdas} maximizes the difference between both sides of inequality \eqref{eq:22_1a}. The same holds for $\lambda$ defined in \eqref{eq:secondlambdas} with respect to \eqref{eq:22_1b}. Note that this is ``best'' in a violation sense, and may not translate to finding the \emph{deepest} cut.
\end{remark}

\subsubsection{All maximal outer-product-free sets when $n_r = 2$}

As a consequence of \Cref{thm:22thm} we can characterize \emph{all} outer-product-free sets in $\mathbb{S}^{2\times 2}$. The next corollary follows from \Cref{thm:22thm}.

\begin{corollary}
\label{cor:psdmax}
$\mathbb{S}^{n_r\times n_r}_+$ is maximal outer-product-free iff $n_r\leq 2$.
\end{corollary}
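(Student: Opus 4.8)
The plan is to derive both directions of the equivalence directly from Lemma~\ref{lem:pointpluspsd}, which already characterizes exactly when adjoining a single matrix $\bar X$ to $\mathbb{S}^{n\times n}_+$ and taking the convex hull remains outer-product-free (namely, iff $\bar X$ has at least two nonnegative eigenvalues). Two elementary facts will be used repeatedly. First, $\mathbb{S}^{n\times n}_+$ is itself outer-product-free for every $n \ge 2$, since any outer product $dd^T$ has rank at most one, hence is a singular PSD matrix, hence lies on the boundary rather than in the interior (the positive definite part) of the PSD cone. Second, the interior operator is monotone, so if a convex set fails to be outer-product-free then so does every convex set containing it.

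For the ``if'' direction, assume $n = 2$. Outer-product-freeness holds by the first fact above. For maximality, suppose toward a contradiction that $\mathbb{S}^{2\times 2}_+ \subsetneq \bar C$ for some outer-product-free $\bar C$, and choose $\bar X \in \bar C \setminus \mathbb{S}^{2\times 2}_+$. Then $\bar X$ has a strictly negative eigenvalue, so at most one of its two eigenvalues is nonnegative, i.e.\ $k \le 1$ in the notation of Lemma~\ref{lem:pointpluspsd}. That lemma then gives that $\mbox{conv}(\bar X \cup \mathbb{S}^{2\times 2}_+)$ is not outer-product-free; since this set is contained in the convex set $\bar C$, monotonicity of the interior forces $\bar C$ not to be outer-product-free either, a contradiction.

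For the ``only if'' direction, I would dispose of $n = 1$ and $n \ge 3$ separately. When $n = 1$, $\mathbb{S}^{1\times 1}_+$ is the nonnegative ray, whose interior contains $1 = 1^2$, an outer product, so it is not outer-product-free at all, hence certainly not maximal. When $n \ge 3$, pick $\bar X := \mathrm{diag}(1,\dots,1,-1)$ --- or any symmetric matrix with at least two nonnegative and at least one negative eigenvalue, which exists precisely because $n \ge 3$. Then $\bar X \notin \mathbb{S}^{n\times n}_+$, so $\mbox{conv}(\bar X \cup \mathbb{S}^{n\times n}_+)$ strictly contains $\mathbb{S}^{n\times n}_+$, and Lemma~\ref{lem:pointpluspsd} applied with $k = n - 1 \ge 2$ shows this strictly larger set is still outer-product-free; hence $\mathbb{S}^{n\times n}_+$ is not maximal.

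Given Lemma~\ref{lem:pointpluspsd}, the remaining work is light, so there is no serious obstacle; the points that require a little care are the monotonicity-of-interior step that transfers non-outer-product-freeness from the convex hull up to an arbitrary containing set $\bar C$, the eigenvalue bookkeeping (a negative eigenvalue in dimension $2$ leaves at most one nonnegative eigenvalue, whereas dimension $\ge 3$ permits two nonnegative together with one negative), and not forgetting the degenerate case $n = 1$, where $\mathbb{S}^{n\times n}_+$ already fails to be outer-product-free.
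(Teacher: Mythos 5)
Your proof is correct and follows the paper's intended route: the paper's own proof is simply the assertion that the corollary follows immediately from \Cref{lem:pointpluspsd}, and you have supplied exactly the details that assertion leaves implicit (the $k\le 1$ bookkeeping for $n=2$, the $k\ge 2$ witness for $n\ge 3$, and the degenerate $n=1$ case). No issues.
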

\begin{proof}
Let $a,d$ be diagonal entries so $b=c$ by symmetry.  Then case (i) in \Cref{thm:22thm} describes the maximal outer-product-free set $\bar C$ given by $a+d\geq \|2b,a-d\|_2$, i.e. a $2\times 2$ principal submatrix is PSD.  If $n_r>2$, $\bar C$ strictly contains $\mathbb{S}^{n_r\times n_r}_+$.  Hence, $\mathbb{S}^{n_r\times n_r}_+$ cannot be maximal. 

If $n_r=2$, then $\bar C = \mathbb{S}^{n_r\times n_r}_+$. For $n_r <2$, maximality is trivial.
\end{proof}

\begin{lemma}
\label{lem:psdunique}
In $\mathbb{S}^{2\times 2}$ the cone of positive semidefinite matrices is the unique maximal outer-product-free set containing at least one positive semidefinite matrix in its interior.  
\end{lemma}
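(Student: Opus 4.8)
The plan is to prove the two inclusions separately: first show that every maximal outer-product-free set $C$ in $\mathbb{S}^{2\times 2}$ whose interior meets the PSD cone satisfies $C\subseteq \mathbb{S}^{2\times 2}_+$, and then upgrade this to equality using maximality of $C$ together with the fact (\Cref{cor:psdmax} with $n=2$, or \Cref{thm:22thm}) that $\mathbb{S}^{2\times 2}_+$ is itself outer-product-free.

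The one structural observation that makes the $2\times 2$ case special, and which I would record first, is that the set of symmetric outer products $\{xx^T : x\in\mathbb{R}^2\}$ is \emph{exactly} the boundary of the PSD cone: a $2\times 2$ symmetric matrix equals some $xx^T$ (allowing $x=0$) if and only if it is positive semidefinite with rank at most one, i.e.\ if and only if it is a singular PSD matrix, i.e.\ if and only if it lies in $\mbox{bd}(\mathbb{S}^{2\times 2}_+)$. Thus the outer-product-free hypothesis on $C$ is equivalent to $\mbox{int}(C)\cap\mbox{bd}(\mathbb{S}^{2\times 2}_+)=\emptyset$.

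From here the argument is short. Since $C$ has nonempty interior it is full-dimensional, and by \Cref{cor:maxcone} it is a convex cone; in particular $\mbox{int}(C)$ is convex, hence connected. The complement of $\mbox{bd}(\mathbb{S}^{2\times 2}_+)$ is the disjoint union of the two open sets $\mbox{int}(\mathbb{S}^{2\times 2}_+)$ and $\mathbb{S}^{2\times 2}\setminus\mathbb{S}^{2\times 2}_+$, so a connected set avoiding $\mbox{bd}(\mathbb{S}^{2\times 2}_+)$ lies entirely in one of them. Let $\bar X$ be the PSD matrix lying in $\mbox{int}(C)$; since $\bar X\in\mbox{int}(C)$ it is not a symmetric outer product, hence neither the zero matrix nor of rank one, so in $2\times 2$ it must be positive definite, i.e.\ $\bar X\in\mbox{int}(\mathbb{S}^{2\times 2}_+)$. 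Therefore $\mbox{int}(C)\subseteq\mbox{int}(\mathbb{S}^{2\times 2}_+)$, and taking closures (a convex set with nonempty interior is contained in the closure of its interior) gives $C\subseteq\mathbb{S}^{2\times 2}_+$. Finally, since $\mathbb{S}^{2\times 2}_+$ is outer-product-free, maximality of $C$ forces $C=\mathbb{S}^{2\times 2}_+$.

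I do not anticipate a real obstacle: the content is concentrated in the first observation (the identification of the outer-product set with $\mbox{bd}(\mathbb{S}^{2\times 2}_+)$, which genuinely fails for $n\geq 3$ — consistent with \Cref{cor:psdmax} — since there the rank-one PSD matrices are only a proper subset of the boundary), while the remainder is a routine connectedness-and-closure argument.
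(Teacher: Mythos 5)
Your proposal is correct and follows essentially the same route as the paper: both arguments rest on the key observation that in $\mathbb{S}^{2\times 2}$ the symmetric outer products are exactly the boundary of the PSD cone, and both then use convexity of $C$ to rule out $C$ straddling that boundary (the paper via a segment from the interior PSD point to a hypothetical exterior point, you via connectedness of $\textnormal{int}(C)$), finishing with maximality of $\mathbb{S}^{2\times 2}_+$ from \Cref{cor:psdmax}. Your write-up is if anything a little more careful than the paper's about why the interior PSD point must be positive definite and about passing from $\textnormal{int}(C)\subseteq\textnormal{int}(\mathbb{S}^{2\times 2}_+)$ to $C\subseteq\mathbb{S}^{2\times 2}_+$.
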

\begin{proof}
From \Cref{cor:psdmax} we have that the cone of PSD matrices is maximal for $n_r = 2$.  Hence, if there exists a maximal outer-product-free set containing a PSD matrix in its interior, it consequently contains in its interior a boundary point of $\mathbb{S}_+^{2\times 2}$  ---otherwise, it is a subset of the PSD cone.  However, every boundary point of the PSD cone has at least one zero-valued eigenvalue; it follows that for $n_r = 2$ every such point is a symmetric outer product. \qed
\end{proof}

\begin{theorem}
\label{thm:smallmaxfree}
In $\mathbb{S}^{2\times 2}$ every full-dimensional maximal outer-product-free set is either the cone of positive semidefinite matrices or a halfspace of the form $\langle A,X\rangle \geq 0$, where $A$ is a symmetric negative semidefinite matrix.
\end{theorem}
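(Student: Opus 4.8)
The plan is to leverage the classification results already in hand---Corollary~\ref{cor:maxcone}, Theorem~\ref{thm:psdhalf}, and Lemma~\ref{lem:psdunique}---together with a single application of the separating hyperplane theorem. Let $C \subset \mathbb{S}^{2\times 2}$ be a full-dimensional maximal outer-product-free set. By Corollary~\ref{cor:maxcone}, $C$ is a convex cone (hence $0 \in C$ and, being equal to its closed conic hull by Theorem~\ref{thm:opfcone}, closed), and by full-dimensionality $\mbox{int}(C) \neq \emptyset$. Everything hinges on the dichotomy: either $\mbox{int}(C)$ meets the cone $\mathbb{S}_+^{2\times 2}$ of positive semidefinite matrices, or it does not.

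In the first case, $\mbox{int}(C) \cap \mathbb{S}_+^{2\times 2} \neq \emptyset$, so $C$ is a maximal outer-product-free set containing a positive semidefinite matrix in its interior; Lemma~\ref{lem:psdunique} then forces $C = \mathbb{S}_+^{2\times 2}$, which is the first of the two asserted forms. In the second case, $\mbox{int}(C)$ is a nonempty open convex set disjoint from the nonempty convex set $\mathbb{S}_+^{2\times 2}$, so there is a separating hyperplane: a nonzero $A \in \mathbb{S}^{2\times 2}$ and a scalar $b$ with $\langle A, X\rangle \geq b$ for all $X \in \mbox{int}(C)$ and $\langle A, Y\rangle \leq b$ for all $Y \in \mathbb{S}_+^{2\times 2}$ (replacing $A$ by $-A$ if necessary). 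Since $C$ is closed and equals the closure of $\mbox{int}(C)$, the first inequality extends to all of $C$. Evaluating both inequalities at the origin, which lies in $C$ and in $\mathbb{S}_+^{2\times 2}$, pins down $b = 0$. Then $d^TAd = \langle A, dd^T\rangle \leq 0$ for every $d \in \mathbb{R}^2$, so $A$ is negative semidefinite and nonzero, and $C \subseteq \{X : \langle A, X\rangle \geq 0\}$. By Theorem~\ref{thm:psdhalf} this halfspace is outer-product-free; since $C$ is maximal outer-product-free and contained in it, $C$ equals it, giving the second asserted form.

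The argument is short, so there is no substantial obstacle; the one point requiring care is ensuring the separating functional in the second case is negative semidefinite. That is guaranteed precisely because both $C$ and $\mathbb{S}_+^{2\times 2}$ are cones through the origin, which forces $b = 0$ and hence reduces the separation condition over $\mathbb{S}_+^{2\times 2}$ to $d^TAd \le 0$ for all $d$. One should also observe that ``nonzero and negative semidefinite'' means $\langle A, X\rangle \ge 0$ is a genuine (proper) halfspace, so the two cases are exhaustive and the conclusion matches the statement verbatim.
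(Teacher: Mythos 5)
Your proof is correct and follows essentially the same route as the paper's: split on whether $C$ contains a PSD matrix in its interior, use \Cref{lem:psdunique} in the first case, and in the second case separate $C$ from $\mathbb{S}_+^{2\times 2}$ by a hyperplane through the origin, deduce that the normal $A$ is negative semidefinite, and conclude by maximality via \Cref{thm:psdhalf}. The only (harmless) differences are cosmetic — you pin down $b=0$ by evaluating at the origin rather than citing \Cref{lem:halfspaces}, and you get NSD-ness directly from $d^TAd\le 0$ instead of by contradiction.
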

\begin{proof}
From \Cref{lem:psdunique}, we have that every maximal outer-product-free set is either the cone of positive semidefinite matrices or it does not contain a PSD matrix in its interior.  Now suppose $C \in \mathbb{S}^{n_r\times n_r}$ is a maximal outer-product-free set that is not the cone of positive semidefinite matrices. $C$ is thus a closed, convex set with interior that does not intersect with $\mathbb{S}_+^{2\times 2}$.  Then by the separating hyperplane theorem there exists a supporting hyperplane of $\mathbb{S}^{2\times 2}_+$, which by \Cref{cor:halfspaces} and \Cref{lem:psdunique} is of the form $\langle A,X\rangle = 0$, such that $C$ is contained in the halfspace $\langle A,X\rangle \geq 0$.  But if $A$ has a positive eigenvalue then the halfspace includes at least one PSD matrix; thus to maintain separation $A$ is necessarily negative semidefinite.  Furthermore, for any negative semidefinite $A$ the halfspace $\langle A,X\rangle \geq 0$ is outer-product-free by \Cref{thm:psdhalf} so $C$ must be the halfspace itself in order to be maximal outer-product-free. \qed
\end{proof}

\section{Step Lengths, Strengthening, and Separation}
\label{sec:polycuts}
In this section we discuss the implementation of Step 3 and 4 of the intersection cut as described in \Cref{subsec:applyint}. In particular, given a simplicial cone $P'$ with apex $\bar X \notin S_{OP}$, we discuss how to select appropriate outer-product-free sets among those given in \Cref{sec:poly} and how to generate the corresponding step lengths $\lambda$ in order to generate a cut for $P$ that separates $\bar X$. 

\subsection{Oracle-Based Cuts}
\label{subsec:polyocuts}
As shown in \Cref{sec:ellipsoids}, the oracle ball $\mathcal{B}_{\text{oracle}}(\bar X)$ can be used to generate a separating intersection cut for any $\bar X$ that is not a symmetric, real outer-product, and calculation of the radius and center of either ball can be done using the spectral decomposition of $\bar X$.  For $\mathcal{B}_{\text{oracle}}$, the step lengths $\lambda$ are all equal to the radius of the ball.  We can strengthen the cut further by using 
the conic extension $\mbox{clcone}(\mathcal{B}_{\text{shift}}(\bar X,s))$ (see \Cref{prop:shiftprop} and \Cref{thm:opfcone}).

If $\bar X$ is NSD, then by \Cref{prop:shiftprop} for any $\epsilon > 0$ the shifted ball $ \mathcal{B}((1+ \allowbreak \frac{\epsilon}{\|\bar X\|_F})\bar X, \allowbreak \|\bar X\|_F+\epsilon )$ is outer-product-free.  The closed conic hull of this ball (irrespective of $\epsilon$) is a halfspace tangent to the ball at the origin.  A normal vector of this halfspace is thus $\bar X/\|\bar X\|_F$, and so the equation of the halfspace is $\langle \bar X/\|\bar X\|_F,X\rangle \geq 0$.  The best possible cut from this maximal (recall \Cref{thm:psdhalf}) outer-product-free halfspace is a halfspace in the opposite direction, 
\[\langle \bar X/\|\bar X\|_F,X\rangle \leq 0.\]

Otherwise, if  $\bar X$ is not NSD, then we must determine the sign of $\lambda_2$.  If $\lambda_2$ is nonpositive, then we may use the halfspace that contains $\lambda_1d_1d_1^T$ on its boundary and that is perpendicular to the vector from $\bar X$ to $\lambda_1d_1d_1^T$, i.e. $\langle \bar X-\lambda_1d_1d_1^T,X-\lambda_1d_1d_1^T\rangle \geq 0$.  Again, the best possible cut is a halfspace in the opposite direction:
\[\langle \bar X-\lambda_1d_1d_1^T,X-\lambda_1d_1d_1^T\rangle \leq 0. \]

If $\bar X$ is not NSD and $\lambda_2$ is positive, then we may use the maximum shift prescribed by \Cref{prop:shiftprop}: $s =  \frac{\lambda_1}{\lambda_2} \left\|\sum_{i=2}^n\lambda_id_id_i^T\right\|_F$.  This gives us a shifted ball with centre 
\[X_C:=\lambda_1d_1d_1^T \, +\,\allowbreak \frac{\lambda_1}{\lambda_2}(\bar X - \lambda_1d_1d_1^T)\] 
and radius 
\[q:=\frac{\lambda_1}{\lambda_2} \left\|\bar X - \lambda_1d_1d_1^T\right\|_F.\]

The ball does not touch the origin (see proof of \Cref{prop:shiftprop}), thus $\mbox{cone}(\mathcal{B}(X_C,q))$ is outer-product-free and contains $\bar X$.  Given the $k$th extreme ray of $P'$, emanating from $\bar X$ along the direction $D^{(k)}$, we wish to determine the intersection point $Z_0:= \bar X + \lambda_k D^{(k)}$ with the boundary of $\mbox{cone}(\mathcal{B}(X_C,q))$.  First we must check if the ray is contained in the cone, i.e. if the intersection is at infinity. The scalar projection of the direction vector onto the axis of the cone is $\langle D^{(k)}, X_C \rangle/\|X_C\|_F$. If the scalar is negative, then the ray passes through the cone.  If the scalar is nonnegative, then the radius of the cone at the projected point is $r_1:= \langle D^{(k)},X_C \rangle q\allowbreak /(\|X_C\|_F\sqrt{\|X_C\|_F^2-q^2})$ (see \Cref{eq:appradfinal} in \Cref{sec:radius}). The distance from $D^{(k)}$ to the cone's axis is $d_1 := \|D^{(k)}-(\langle D^{(k)},X_C\rangle/\langle X_C,X_C\rangle )X_C\|_F$.  

If $d_1\geq r_1$ then the ray intersects with the boundary of the cone and the step length is finite.  The scalar projection of $Z_0$ onto the axis of $\mbox{cone}(\mathcal{B}(X_C,q))$ is given by $\langle Z_0,X_C \rangle/\|X_C\|_F$.  The radius of the cone at the projected point is $r_2:=\langle Z_0,X_C \rangle q\allowbreak/(\|X_C\|_F\sqrt{\|X_C\|_F^2-q^2})$. The distance from $Z_0$ to the axis is $d_2 := \|Z_0-(\langle Z_0,X_C\rangle/\langle X_C,X_C\rangle )X_C\|_F$. Intersection occurs at $d_2=r_2$, and squaring both sides yields a quadratic equation, the positive root of which yields the step length. 

Otherwise, the step length is infinite  and we may apply the strengthening procedure of \Cref{sec:strengthen}.  Let $m$ be the index of an extreme ray of $P'$ with finite intersection. Applying \Cref{eq:strength} yields
\begin{align}
 \lambda_k^{'} := \max \{ y\,|\, & \|\lambda_mD^{(m)}-yD^{(k)}- (\langle \lambda_mD^{(m)}-yD^{(k)},X_C\rangle/\langle X_C,X_C\rangle )X_C\|_F  \nonumber \\
 & \leq \langle \lambda_mD^{(m)}-yD^{(k)},X_C \rangle q/(\|X_C\|_F\sqrt{\|X_C\|_F^2-q^2}) \} \label{eq:newsteporacle}
 \end{align}

The maximum occurs when the inequality is set to equality.  Squaring both sides of the equality we again have a quadratic equation, and the step length is its (algebraically) greatest root.


\subsection{Outer-Approximation Cuts}
The maximal outer-product-free sets described by \Cref{thm:psdhalf} are halfspaces, and so the best possible cuts that can be derived from these are of the form $\langle A,X \rangle \leq 0$, where $A$ is NSD.  However, observe that if $A$ has rank $k > 1$, then the cut is of the form $\sum_{i=1}^k \lambda_i d_i^TXd_i \leq 0$, where each $\lambda_i$ is a negative eigenvalue.  Then the inequality is implied by and thus weaker than the individual inequalities of the form $\lambda_i d_i^TXd_i \leq 0$.  These individual inequalities are valid as they are necessary for the positive semidefinite condition on $X$, 
and so the halfspaces described by \Cref{thm:psdhalf} characterize the outer-approximation cuts of the SDP relaxation to \textbf{LPO}.  Therefore separation is only possible if $\bar X$ is not PSD.  We adopt a standard approach to separation (see e.g. \cite{sherali2002enhancing,saxena2011convex,qualizza2012linear,krishnan2006unifying}), using all negative eigenvectors of $\bar X$ as cut coefficient vectors.  

\subsection{$2\times 2$ Submatrix Cuts}
\label{subsec:22cone}
Consider 
the infinite family of maximal outer-product-free sets established in \Cref{thm:22thm}.  Supposing that non-negativity of $\mbox{diag}(X)$ is enforced in $P$, then from \Cref{prop:KDS} it is known $\bar X \notin S_{OP}$ implies at least one $2\times 2$ minor of $\bar X$ is nonzero. From \Cref{lemma:22always} we know there is always a cone of the form \eqref{eq:22_1a} or \eqref{eq:22_1b} which contains $\bar X$ in its interior.
Searching for an appropriate $2\times 2$ submatrix is straightforward: we can enumerate over all $2\times 2$ submatrices and check for a nonzero $2\times 2$ minor. To generate potentially several cuts from $\bar X$, we also consider maximal outer-product-free sets from (i)-(iii) and (v)-(vii) of \Cref{thm:22thm}. After we have identified a cone $C$ given by either \eqref{eq:22_1a} or \eqref{eq:22_1b} containing $\bar X$, we need to compute the corresponding step lengths. 

Fix a direction $D$, given by an extreme ray of $P'$. We first evaluate if $D \in C$, i.e. evaluating the expression \eqref{eq:22_1a} or \eqref{eq:22_1b}. If $D \not\in C$, we seek the step length $\mu \geq 0$ such that $\bar X +\mu D$ lies on the boundary of the $2\times 2$ cone $C$. Since $C$ is represented as a second-order cone, computing such $\mu$ reduces to simply computing the roots of a single-variable quadratic. On the other hand, if $D \in C$, since $C$ is a cone we have $\bar X +\mu D \in C$ for all $\mu\geq 0$. Therefore, we have an infinite step length.
In the case of infinite step length,  we can use the strengthening procedure described in \Cref{sec:strengthening}. The key step in the strengthening procedure is solving over the recession cone for \eqref{eq:strength}, which in general requires solving $|\bar M|$ optimization problems over $\mbox{rec}(C)$. However, in this case $\mbox{rec}(C) = C$, and $C$ is described as a second order cone. Therefore, solving the optimization in \eqref{eq:strength} amounts to simply finding the roots of a single-variable quadratic.



\section{Numerical Examples and Experiments}
\label{sec:exp}

\subsection{Example: Polynomial Optimization}
We provide a simple example in $\mathbb{S}^{2\times 2}$. 
Consider the following polynomial optimization problem:
\begin{alignat*}{2}
\min \ & x_1^2+x_2^2\\
\mbox{s.t. } &  -x_1^2-x_2^2 +x_{1}x_{2}\leq -2,\\
&-x_1^2-x_2^2 -x_{1}x_{2}\leq -2,\\
& -x_1^2+x_2^2-x_1x_2\leq 0.
\end{alignat*}
An $\mathbf{LPO}$ representation (ignoring linear terms) is
\begin{subequations}
\begin{alignat}{2}
\min \ & X_{11}+X_{22}\nonumber\\
\mbox{s.t. } &  -X_{11}-X_{22} +X_{12}\leq -2, \label{eq:exlin1}\\
&-X_{11}-X_{22} -X_{12}\leq -2, \label{eq:exlin2}\\
& -X_{11}+X_{22}-X_{12}\leq 0, \label{eq:exlin3}\\
&X=xx^T. \label{eq:exop}
\end{alignat}
\end{subequations}
Dropping the outer product constraint~(\ref{eq:exop}) results in a linear program ---indeed, by construction the linear constraints describe a simplicial cone.  The optimal basic solution $\bar X$ to this linear relaxation is at the apex of the simplicial cone,
\(
\bar X=\left[\begin{array}{cc}
1 & 0\\
0 & 1\end{array}\right].
\)
Taking the negative basis inverse,
\[
-\left[\begin{array}{ccc}
-1 & -1 & 1  \\
-1 & -1 & -1\\
-1 & 1 & -1 \end{array}\right]^{-1} = 
\left[\begin{array}{ccc}
0.5 & 0 & 0.5  \\
0 & 0.5 & -0.5\\
-0.5 & 0.5 & 0 \end{array}\right],
\]
we obtain the following extreme ray directions
\[
D^{(1)}=\left[\begin{array}{cc}
0.5 & -0.5\\
-0.5 & 0 \end{array}\right],
D^{(2)}=\left[\begin{array}{cc}
0 & 0.5\\
0.5 & 0.5\end{array}\right],
D^{(3)}=\left[\begin{array}{cc}
0.5 & 0\\
0 & -0.5\end{array}\right].
\]

\paragraph{$\mathbf{2\times 2}$ \textbf{Cut.}}
The procedure in \Cref{subsec:22cone} gives us the step lengths
\[\lambda_1^* = \lambda_2^* = 2\phi \approx 3.24, \lambda_3^* = 2,\]
where $\phi := \frac{1+\sqrt{5}}{2}$ is the golden ratio. Using \Cref{eq:closedformcut} we obtain the cut 
\[(0.5+\phi^{-1})X_{11}+(\phi^{-1}-0.5)X_{22}+0.5X_{12} \geq 2\phi^{-1}+1.\]
After adding this cut, the strengthened LP produces a rank one solution,
\(
\left[\begin{array}{cc}
2 & 0\\
0 & 0\end{array}\right].
\)

\paragraph{\textbf{Outer Approximation Cut.}}
As $\bar X$ is strictly positive definite, no outer approximation cut can separate it.

\paragraph{\textbf{Oracle-Based Ball Cut.}}
Both eigenvalues of $\bar X$ are equal to 1, and so the radius of the ball is 1.  Note that we must normalize the radius to obtain the step lengths, i.e. $\lambda_k = 1/\|D^{(k)}\|_F$. Hence the oracle ball cut is strictly dominated by the $2\times 2$ cut: $\lambda_1 = \lambda_2 = 2/\sqrt{3} \approx 1.15, \lambda_3 = \sqrt{2} \approx 1.41$.

\paragraph{\textbf{Oracle-Based Expanded Ball Cut.}}
From \Cref{prop:shiftprop} we have equal eigenvalues, and so the shifting has no effect, i.e. $X_C=\bar X$ and $q=1$.  $D^{(1)}$ has finite intersection with $\mbox{cone}(\mathcal{B}(X_C,q))$ since $r_1\approx 0.35 \allowbreak < d_1\approx 0.79$. 
The quadratic equation for determining steplength is $-\lambda_1^2 + 2\lambda_1 + 4 = 0$, so $\lambda_1 = 2\phi$.  

For $D^{(2)}$ we have $r_1\approx 0.35 \allowbreak < d_1\approx 0.79$, so there is a finite step length. The quadratic equation is $-\lambda_2^2 + 2\lambda_2 + 4 = 0$, so $\lambda_2 = 2\phi$. 

For $D^{(3)}$ we have $r_1=0$ and $d_1=1/\sqrt{2}$, so there is a finite step length. The quadratic equation is $-\lambda_3^2 + 4 = 0$, so $\lambda_3 =2$. 

Thus the strengthening recovers the $2\times 2$ cut.  Note that \Cref{thm:smallmaxfree} says that in $\mathbb{S}^{2\times 2}$ both oracle-based outer-product-free sets are contained in the $2\times 2$ cone since they contain the strictly positive definite $\bar X$.  Hence the $2\times 2$ cone is the best possible outer-product-free extension of the oracle ball.

\subsection{Example: Cardinality Constraint}
\label{subsec:card}
The oracle (intersection) cut of \Cref{sec:ballcut} can be computed quickly provided $d(\bar x,S)$ can be determined quickly for a given set $S$. This is the case when $S$ represents $k$-cardinality constrained vectors:
\[S:=\{x\in\mathbb{R}^n\, |\, \mbox{card}(x) \leq k\},\]
where $\mbox{card}(\cdot)$ is the number of nonzero entries. For a given vector $\bar x$, the nearest point in $S$ is a vector $\hat x$ equal to $\bar x$ at the $k$ largest magnitude entries, and zero elsewhere.  

A simple application is the statistical problem of cardinality-constrained least-absolute deviation regression (see \cite{konno2009choosing}):
\[\min\{ |Ax-b| \, : \, \mbox{card}(x)\leq k\}.\]
The cardinality constraint is used to prevent statistical overfitting.  In our example let
\[A=\left[\begin{array}{ccc}
1 & 2 & 3\\
2 & -1 & 1\\
3 & 0 & -1
\end{array}\right], b = \left[\begin{array}{c}
9\\
8\\
3
\end{array}\right],
\]
and let $k=2$.  For the $S$-free approach we use the following formulation,
\begin{subequations}
\begin{align}
\min \qquad x_4 + x_5 + x_6 &  \nonumber\\
s.t. \qquad x_1 + 2x_2 + 3x_3 - 9 &\ \leq \  x_4,\label{eq:excard1}\\
-x_1 - 2x_2 - 3x_3 + 9 &\ \leq \  x_4,\label{eq:excard2}\\
2x_1 - x_2 + x_3 - 8 &\ \leq \  x_5,\label{eq:excard3}\\
-2x_1 + x_2 - x_3 + 8 &\ \leq \  x_5,\label{eq:excard4}\\
3x_1 - x_3 - 3 &\ \leq \  x_6,\label{eq:excard5}\\
-3x_1 + x_3 + 3 &\ \leq \  x_6,\label{eq:excard6}\\
 x_4,x_5,x_6 &\ \geq \  0, \label{eq:excardnn}\\
\mbox{card}([x_1,x_2,x_3])&\ \leq \  2. \label{eq:excardcard}
\end{align}
\end{subequations}

The problem is formulated in extended space with augmented variables $x_4,x_5,x_6$ in order to represent the objective function with linear constraints.  Constraints~(\ref{eq:excard1})-(\ref{eq:excard6}) relate the augmented variables to the original objective function. Constraints~(\ref{eq:excardnn}) are redundant inequalities used to form a simplicial cone after solving the linear programming relaxation.  Dropping the nonconvex constraint~(\ref{eq:excardcard}) yields a linear programming relaxation.

The LP relaxation has an optimal solution $x^* = [2, -1, 3, 0, 0, 0]^T$ with objective value $0$. The closest vector to $x^*$ obeying the cardinality constraint is $[2, 0, 3, 0, 0, 0]^T$ with Euclidean distance 1, giving us a step size of 1 along all directions for the intersection cut.  The simplicial cone with apex $x^*$ may be written as $\bar A x \leq \bar b$, where

\[\bar A=\left[\begin{array}{cccccc}
1 & 2 & 3 & -1 & 0 & 0\\
2 & -1 & 1& 0 & -1 & 0\\
3 & 0 & -1& 0 & 0 & -1\\
0 & 0& 0& -1& 0& 0\\
0 & 0& 0& 0& -1& 0\\
0 & 0& 0& 0& 0& -1
\end{array}\right], \bar b = \left[\begin{array}{c}
9\\
8\\
3\\
0\\
0\\
0
\end{array}\right].
\]
Applying \Cref{eq:closedformcut}, we generate the cut
\[6x_1+x_2+3x_3-2x_4-2x_5-2x_6\leq 19.\]
After adding the cut, the linear programming relaxation has optimal solution 
\(x^* \allowbreak \approx \allowbreak [1.98,-1.08,2.95,0.33,0,0]^T\)
with improved objective value $\frac{1}{3}$.

\subsection{Numerical Experiments}
\label{sec:cexp}

We present experiments using a pure cutting-plane algorithm using 
the cuts described in \Cref{sec:polycuts}. The experiments are designed to investigate the stand-alone performance of our cuts, particularly speed and relaxation quality. 
The cutting plane algorithm solves an LP relaxation and obtains an (extreme point) optimal solution $\bar{X}$, adds cuts 
separating $\bar{X}$, and repeats until either:
\begin{itemize}
\item A time limit of 600 seconds is reached, or
\item The objective value does not improve for 10 iterations, or
\item The violation of all cuts is not more than $10^{-6}$. Here, if $\pi^T x \leq \pi_0 $ is the cut and $x^*$ is the candidate solution, we define the violation as $(\pi^T x^* - \pi_0)/\| \pi \|_1$.
\end{itemize}

For numerical stability, we add a maximum of 20 cuts per iteration (selected using violations) 
and purge non-active cuts every 15 iterations. Computations are run on a 32-core server with an Intel Xeon Gold 6142 2.60GHz CPU and 512 GB of RAM. Although the machine is powerful, we run the algorithm single-threaded and the experiments do not require a significant amount of memory; we confirmed 
that similar performance can be obtained with a laptop. The code is written in C++ using the Eigen library for linear algebra \cite{eigenweb}. The LP solver is Gurobi 8.1.1 
and, for comparisons, we solve SDP relaxations using the C++ Fusion API of Mosek 8 \cite{mosek}. Our code is available at \url{https://github.com/g-munoz/poly_cuts_cpp}.

Our cuts can accommodate polynomials of arbitrary degree, however for implementation purposes reading quadratically-constrained quadratic programs (QCQP) problems is more convenient. Thus, our implementation is built for QCQPs only. However, we note that for our purposes this is without loss of generality.  Any polynomial optimization problem of degree $d$ that is lifted to \textbf{LPO} with monomials of degree $r\geq \lceil d/2 \rceil$ can be transformed to a QCQP whose lifted representation with $r=1$ is the same. One can obtain said QCQP by projecting \textbf{LPO} onto $m_r$: adding all linear consistency constraints on $X$ in \textbf{LPO}; replacing $X$ with $m_rm_r^T$; and treating $m_r$ as a vector of decision variables.


\subsubsection{Experiments in QCQPs}
Our main experiments are performed over nonconvex QCQPs. In this case, test instances are taken from two sources.  First, we consider all 27 problem instances from Floudas et al. \cite{floudas2013handbook} (available via GLOBALLib \cite{globalliburl}) that have quadratic objective and constraints.  Second, we consider all 99 instances of BoxQP developed by several authors \cite{VanNem05b,Burer10}.  These problems have box constraints $x \in [0,1]^n$ and a nonconvex quadratic objective function. We refer the reader to \cite{burer2009globally} for a semidefinite programming approach to this class of instances and \cite{chen2012globally} for a completely positive approach to QPs. Recent papers \cite{bonamiglobally,xia2019globally} have considered solving BoxQP with integer linear programming.

In order to evaluate the performance of our general-purpose cuts, we compare our results with the V2 setting used by Saxena, Bonami and Lee \cite{saxena2010convex} and with SDP relaxations. We chose V2 in \cite{saxena2010convex} as a comparison as we find it the most similar to our approach. V2 uses an lifted linear relaxation for QCQPs and applies two types of cuts:
an outer-approximation of the PSD cone and disjunctive cuts for which the separation involves a MIP.
We emphasize that these families of cuts are complementary and not competitive, and the comparison is only meant to provide a reference on the effectiveness of our cuts.  

In the GLOBALLib instances, we choose the initial LP relaxation to be the standard RLT relaxation of QCQP: setting $r=1$ in $\mathbf{LPO}$ and including McCormick estimators for bilinear terms (see \cite{mccormick1976computability,anstreicher2009semidefinite}). To obtain variable bounds for some of the GLOBALLib instances we apply a simple bound tightening procedure: minimize/maximize a given variable subject to the RLT relaxation. Problem sizes vary from $6\times6$ to $63\times63$ for these instances.

In the BoxQP instances, we adopt for comparison purposes the initial relaxation used by Saxena, Bonami and Lee \cite{saxena2010convex}, namely the weak RLT relaxation (wRLT)\footnote{This BoxQP relaxation only adds the ``diagonal'' McCormick estimates $X_{ii} \leq x_i$.}. Problem sizes vary from $21\times21$ to $126\times 126$ symmetric matrices of decision variables for BoxQP instances.

Lastly, we use \emph{Gap Closed} as a measure of quality of the bounds generated by each approach. This is defined as follows: let $OPT$ denote the optimal value of an instance, $IR$ the optimal value of the initial linear relaxation, and $GLB$ the objective value obtained after applying the cutting plane procedure. Then \( \text{Gap Closed} = \frac{GLB-IR}{OPT-IR}.\)\\

\noindent\textbf{Results}.
In \Cref{table:resultsGlib}, we show a performance comparison in the selected GLOBALLib instances between our cutting plane algorithm and the RLT-strengthened SDP relaxation (RLT+SDP). Due to the large performance variability and the small number of instances in this case, we omit averages of performance measures.  Furthermore, we do not show results for 2 instances for which the RLT relaxation is tight (no cuts are needed). The results in \Cref{table:resultsGlib} are very encouraging: in all but 4 instances our linear relaxations close more gap than RLT+SDP. Moreover, our simple cutting plane approach (almost) always runs in a few seconds and in most instances closes considerably more gap than the SDP relaxation.

For comparison purposes, we turned off our simple bound tightening routine in order to obtain the same initial relaxation value as V2 (and thus the gaps are different than the ones in \Cref{table:resultsGlib}). Certain GLOBALLib instances still not matching initial bound values are excluded from comparison. On comparable GLOBALLib instances our algorithm terminates with a considerable gap closed on many cases, but it does produce smaller gap closed than V2 on some instances. The advantage of our cuts is that runtimes are substantially shorter. This is expected, as V2 solves a MIP in the cut generation, while our cuts only require eigendecomposition and roots of single-variable quadratics.  Moreover, in most cases where intersection cuts do not perform well, V2 also shows modest performance. It is important to mention that the running times in \Cref{table:comparisonGlib} for V2 correspond to the reports in \cite{saxena2010convex}, published in 2010. While new hardware may improve these times, we believe the conclusions we draw from \Cref{table:comparisonGlib} would not change substantially. 


We note that while, in theory, the outer-approximation cuts alone should close the same amount of gap as RLT+SDP, this does not always hold in practice. Pure cutting plane algorithms (especially with dense cuts) can suffer from numerical instability and careful implementation is key. In addition, GLOBALLib instances can be numerically challenging. In our implementation, we included conservative criteria regarding cut efficacy, stalling detection, among others, in order to ensure valid cutting planes. These issues are handled in a more sophisticated fashion in fully-fledged solvers.

In \Cref{table:comparisonBoxQP} we show a similar comparison for the 42 BoxQP instances reported in \cite{saxena2010convex}. In this case, given that these are randomly generated instances of the same type, we measure average gap closed. We do not present average times, however, since there is a time limit present which is reached on many instances. Since the initial relaxation considered for V2 is wRLT, we compare with the wRLT-strengthened SDP relaxation (wRLT+SDP). On these instances, our cuts \emph{always} perform better than both V2 and wRLT+SDP. The latter reaches optimality in seconds, but the relaxation is not strong, as there are missing McCormick inequalities. Our intersection cuts, with a time limit of 600 seconds, are able to close 91.39\% gap on average in these instances, while V2 closes 65.28\% and wRLT+SDP 51.87\%. Despite using a weaker initial relaxation, wRLT, our cuts close a large amount of gap in a short amount of time. 

Additional experiment data, comparing our cuts on larger BoxQP instances with the wRLT+SDP relaxation, can be found in \Cref{appendix:boxqp}. The dimensions of these instances range from $50\times 50$ to $125\times 125$. In these larger instances, the LP relaxations quickly become the bottleneck and 600 seconds is not enough to perform a considerable number of cut rounds. Thus, we also report experiments with 1 hour time limit. Using the latter limit, we obtain a better closed gap than wRLT+SDP on average (48.28\% vs 40.65\%), but the difference is not as substantial as before. Nonetheless, this is not due to the separation procedure itself ---the cut generation remains efficient and effective. Issues arise due to the straightforward initial relaxation we use, which creates too many lifted variables and results in large lifted LP relaxations. As a matter of ongoing work, we are considering the use of partial lifting, exploiting problem sparsity to derive a smaller initial LP relaxation.

\begin{table}
\centering
\caption{Comparison of intersection cuts and RLT+SDP on nonconvex quadratic GLOBALLib instances.} \label{table:resultsGlib}\vspace{-0.4cm}
\scalebox{0.85}{
\begin{tabular}[t]{|l|rr|rr|}
\hline
Instance & \multicolumn{2}{c|}{RLT+SDP} &  \multicolumn{2}{c|}{Intersection Cuts} \\ \cline{2-5}
Name & Gap Closed & Time & Gap Closed & Time \\ \hline
Ex2\_1\_1        & 0.00\%     & 0.01 & 52.90\%        & 0.02     \\
Ex2\_1\_5        & 0.00\%    & 0.02 & 99.57\%        & 0.01     \\
Ex2\_1\_6        & 0.00\%     & 0.02 & 94.34\%        & 0.10     \\
Ex2\_1\_7        & 0.00\%     & 0.28 & 38.44\%        & 0.86     \\
Ex2\_1\_8        & 0.00\%     & 0.62 & 55.89\%        & 2.81     \\
Ex2\_1\_9        & 0.00\%     & 0.02 & 30.20\%        & 0.96     \\
Ex3\_1\_1        & 0.00\%     & 0.02 & 1.29\%         & 2.79     \\
Ex3\_1\_2        & 22.41\%    & 0.01 & 100.00\%       & 0.01     \\
Ex3\_1\_4        & 0.00\%     & 0.01 & 34.64\%        & 0.02     \\
Ex5\_2\_2\_case1 & 0.00\%     & 0.02 & 9.82\%        & 0.43     \\
Ex5\_2\_2\_case2 & 0.00\%     & 0.02 & 0.22\%        & 0.89    \\
Ex5\_2\_2\_case3 & 0.00\%  & 0.02   & 1.07\% & 0.49  \\
Ex5\_2\_4        & 0.00\%  & 0.01   & 28.99\% & 0.23  \\ 
%
Ex5\_2\_5        & 0.00\%  & 3.39   & 0.00\%  & 7.50  \\
Ex5\_3\_2        & 0.10\%  & 0.54   & 0.00\%  & 1.08  \\
Ex5\_3\_3        & 3.75\%  & 91.47 & 0.59\%  & 602.33  \\
Ex5\_4\_2        & 0.00\%  & 0.03   & 0.52\%  & 4.66  \\
Ex8\_4\_1        & 98.43\% & 0.41   & 58.82\% & 38.59 \\
Ex9\_1\_4        & 0.00\%  & 0.04   & 66.33\% & 1.24  \\
Ex9\_2\_1        & 6.25\%  & 0.03   & 34.28\% & 7.69  \\
Ex9\_2\_2        & 16.67\% & 0.03   & 85.96\% & 1.45  \\
Ex9\_2\_3        & 0.00\%  & 0.13   & 0.00\%  & 0.47  \\
Ex9\_2\_4        & 99.83\% & 0.03   & 0.00\% & 0.12  \\
Ex9\_2\_6        & 99.76\% & 0.15   & 99.76\% & 600.01  \\
Ex9\_2\_7        & 6.25\%  & 0.01   & 34.28\% & 7.66  \\\hline
\end{tabular}}
\end{table}
\begin{table}
\centering
\caption{Comparison of intersection cuts and V2 of
\cite{saxena2010convex} on nonconvex quadratic GLOBALLib instances. Entries labelled NR were not reported in \cite{saxena2010convex}.} \label{table:comparisonGlib}
\scalebox{0.85}{
\begin{tabular}[t]{|l|rr|rr|}
\hline
Instance & \multicolumn{2}{c|}{V2} &  \multicolumn{2}{c|}{Intersection Cuts} \\ \cline{2-5}
Name         & Gap Closed & Time & Gap Closed & Time \\
\hline
Ex2\_1\_1        & 72.62\%       & 704.40  & 52.90\%        & 0.02     \\
Ex2\_1\_5        & 99.98\%       & 0.17    & 99.68\%        & 0.00     \\
Ex2\_1\_6        & 99.95\%       & 3397.65 & 86.90\%        & 0.12     \\
Ex2\_1\_8        & 84.70\%       & 3632.28 & 16.46\%        & 6.86     \\
Ex2\_1\_9        & 98.79\%       & 1587.94 & 30.28\%        & 0.97     \\
Ex3\_1\_1        & 15.94\%       & 3600.27 & 1.21\%         & 93.22    \\
Ex3\_1\_2        & 99.99\%       & 0.08    & 100.00\%       & 0.00     \\
Ex3\_1\_4        & 86.31\%       & 21.26   & 34.64\%        & 0.02     \\
Ex5\_2\_2\_case1 & 0.00\%        & 0.02    & 25.71\%        & 5.97     \\
Ex5\_2\_2\_case2 & 0.00\%        & 0.05    & 0.00\%         & 0.14     \\
Ex5\_2\_2\_case3 & 0.36\%        & 0.36    & 25.01\%        & 0.74     \\ 
%
Ex5\_2\_4        & 79.31\%       & 68.93   & 29.35\%        & 0.21     \\
Ex5\_2\_5        & 6.27\%        & 3793.17 & 0.00\%         & 6.50     \\
Ex5\_3\_2        & 7.27\%        & 245.82  & 0.00\%         & 1.05     \\
Ex5\_3\_3        & 0.21\%        & 3693.76 & 0.19\%         & 601.19   \\
Ex5\_4\_2        & 27.57\%       & 3614.38 & 1.84\%         & 161.09   \\
Ex9\_1\_4        & 0.00\%        & 0.60    & 0.00\%         & 0.09     \\
Ex9\_2\_1        & 60.04\%       & 2372.64 & 49.80\%        & 3.31     \\
Ex9\_2\_2        & 88.29\%       & 3606.36 & 73.63\%        & 10.69    \\
Ex9\_2\_6        & 87.93\%       & 2619.02 & 99.84\%        & 600.03   \\
Ex9\_2\_8        & NR             & NR       & 100.00\%       & 0.01 \\ \hline
\end{tabular}}
\end{table}

\begin{table}[]
\centering
\caption{Comparison of intersection cuts and V2 of
\cite{saxena2010convex} and SDP on BoxQP instances (all with wRLT).} \label{table:comparisonBoxQP}
\scalebox{0.85}{
\begin{tabular}{|l|cc|cc|cc|}
\hline
Instance & \multicolumn{2}{c|}{V2}  &  \multicolumn{2}{c|}{Intersection Cuts}  & \multicolumn{2}{c|}{wRLT+SDP}     \\  \cline{2-7}
  Name            & Gap Closed & Time              & Gap Closed & Time  & Gap Closed & Time \\ \hline
spar020-100-1 & 95.40\%    & 3638.2            & 99.93\%    & 11.9  & 58.66\%    & 0.4  \\ 
spar020-100-2 & 93.08\%    & 3636.7            & 96.59\%    & 25.3  & 70.36\%    & 0.3  \\ 
spar020-100-3 & 97.47\%    & 3632.6            & 100.00\%   & 1.3   & 70.70\%    & 0.3  \\ 
spar030-060-1 & 60.00\%    & 3823.1            & 82.87\%    & 434.4 & 35.07\%    & 2.3  \\ 
spar030-060-2 & 91.16\%    & 3716.0            & 100.00\%   & 52.2  & 67.05\%    & 2.1  \\ 
spar030-060-3 & 77.41\%    & 3696.5            & 94.02\%    & 348.2 & 55.50\%    & 2.1  \\
spar030-070-1 & 57.39\%    & 3786.0            & 76.34\%    & 310.5 & 32.29\%    & 2.3  \\ 
spar030-070-2 & 86.60\%    & 3708.2            & 99.42\%    & 172.5 & 63.54\%    & 2.5  \\ 
spar030-070-3 & 88.66\%    & 3744.0            & 99.32\%    & 94.4  & 75.51\%    & 1.9  \\ 
spar030-080-1 & 69.67\%    & 3600.8            & 85.52\%    & 294.6 & 43.19\%    & 2.4  \\
spar030-080-2 & 86.25\%    & 3627.1            & 100.00\%   & 34.5  & 55.24\%    & 2.1  \\
spar030-080-3 & 91.42\%    & 3666.4            & 100.00\%   & 32.1  & 71.15\%    & 2.2  \\
spar030-090-1 & 81.15\%    & 3676.8            & 95.06\%    & 171.2 & 55.95\%    & 2.0  \\
spar030-090-2 & 82.66\%    & 3646.8            & 98.98\%    & 206.3 & 58.08\%    & 2.5  \\
spar030-090-3 & 86.37\%    & 3701.8            & 100.00\%   & 69.7  & 61.82\%    & 2.0  \\
spar030-100-1 & 81.10\%    & 3692.5            & 95.51\%    & 600.0 & 60.99\%    & 2.3  \\
spar030-100-2 & 72.87\%    & 3697.3            & 92.35\%    & 525.1 & 51.76\%    & 2.5  \\
spar030-100-3 & 84.10\%    & 3606.5            & 95.16\%    & 230.4 & 64.38\%    & 1.9  \\
spar040-030-1 & 31.05\%    & 3719.2            & 83.79\%    & 684.7 & 25.52\%    & 10.0 \\
spar040-030-2 & 27.74\%    & 3937.9            & 84.14\%    & 608.1 & 26.23\%    & 10.1 \\
spar040-030-3 & 28.00\%    & 3798.7            & 77.87\%    & 609.6 & 9.94\%     & 8.6  \\
spar040-040-1 & 33.31\%    & 3817.8            & 66.88\%    & 601.7 & 23.22\%    & 9.5  \\
spar040-040-2 & 35.19\%    & 3968.1            & 93.85\%    & 602.2 & 38.45\%    & 9.8  \\
spar040-040-3 & 26.71\%    & 3972.9            & 75.09\%    & 608.5 & 22.81\%    & 9.6  \\
spar040-050-1 & 36.72\%    & 3819.7            & 79.24\%    & 602.8 & 30.88\%    & 11.4 \\
spar040-050-2 & 40.87\%    & 3610.6            & 85.10\%    & 610.9 & 35.95\%    & 10.3 \\
spar040-050-3 & 33.95\%    & 3640.0            & 82.09\%    & 604.8 & 30.13\%    & 10.5 \\
spar040-060-1 & 47.75\%    & 3761.0            & 82.65\%    & 601.3 & 42.64\%    & 9.5  \\
spar040-060-2 & 55.79\%    & 3708.0            & 94.64\%    & 610.5 & 54.28\%    & 8.7  \\
spar040-060-3 & 72.63\%    & 3764.1            & 99.34\%    & 601.9 & 65.22\%    & 8.1  \\
spar040-070-1 & 64.03\%    & 3642.7            & 93.07\%    & 612.0 & 60.32\%    & 9.4  \\
spar040-070-2 & 57.91\%    & 3756.4            & 94.96\%    & 602.0 & 53.83\%    & 8.4  \\
spar040-070-3 & 62.94\%    & 3693.7            & 94.46\%    & 600.9 & 58.81\%    & 8.9  \\
spar040-080-1 & 58.37\%    & 3808.3            & 87.61\%    & 602.0 & 49.34\%    & 8.9  \\
spar040-080-2 & 66.96\%    & 4062.4            & 95.18\%    & 600.9 & 57.79\%    & 8.5  \\
spar040-080-3 & 72.31\%    & 4057.1            & 96.63\%    & 602.4 & 67.45\%    & 9.6  \\
spar040-090-1 & 66.64\%    & 3781.0            & 91.18\%    & 603.9 & 60.26\%    & 7.6  \\
spar040-090-2 & 66.46\%    & 3931.3            & 92.12\%    & 602.2 & 60.78\%    & 8.7  \\
spar040-090-3 & 73.49\%    & 4003.7            & 96.87\%    & 602.2 & 66.45\%    & 8.1  \\
spar040-100-1 & 76.24\%    & 3853.6            & 97.23\%    & 602.4 & 70.05\%    & 8.4  \\
spar040-100-2 & 63.89\%    & 3658.3            & 92.98\%    & 607.4 & 59.42\%    & 8.5  \\
spar040-100-3 & 59.92\%    & 3842.7            & 90.41\%    & 602.9 & 57.40\%    & 8.6  \\ \hline \hline
Average       & 65.28\%    &                   & 91.39\%    &       & 51.87\%    &  \\ \hline  
\end{tabular}}
\end{table}

\subsubsection{Preliminary experiments on polynomial instances}
We also performed some preliminary experiments in polynomial optimization instances. We handled these instances as we mention above: we use additional variables to obtain an equivalent QCQP and use the same setting as in the previous section.

In this case, instances are drawn from two sources. Firstly, as in the previous section, we consider instances from \cite{floudas2013handbook} (available via GLOBALLib \cite{globalliburl}). The second set of instances are the ones labeled \emph{miscellaneous} in \cite{tawarmalani2002convexification}. From these sources we consider instances with polynomials of degree 3 or more, that have more than 1 and less than 100 variables, and whose initial lifted linear relaxation is not unbounded. This leaves 11 instances with 2--24 variables and polynomial degrees ranging between 3--8. The limitation on the number of variables was set to avoid memory issues in both our method and in Mosek due to the rapid increase in dimension in moment-based approaches. As we mention above, a sparse version of our approach is the subject of current work. 


In \Cref{table:polyopt} we present our computational results. In this table we compare the strength of the dual bound produced by our method with the SDP+RLT relaxation solved with Mosek. We can observe that our cutting planes always dominate the SDP-based method except in one instance. As we mentioned before, this is expected but does not necessarily hold in practice due to our numerical safety measures. In this case, however, the dominance of our cuts is more modest than in the quadratic case. Nonetheless, we believe these results are promising, as we are still able to do strictly better in some instances with our direct implementation. 

It it worth mentioning that these instances are much more challenging from the numerical perspective, and we believe this is the main reason why our cuts are not as dominant as before. For example, in instance \emph{ex8\_4\_2}, many cutting planes are discarded by our stability criteria, and the algorithm quickly stops when detecting odd oscillations in the dual bounds produced by each round of cuts. We believe the increased instability arises from the equivalent QCQP formulation (or, equivalently, the linear consistency constraint among monomials). This adds many linear \emph{equality} constraints that reduce the dimension of the lifted polyhedral relaxations, which can create issues in the intersection cut framework if not treated with care. Furthermore, high degree polynomials may exacerbate numerical issues; for instance, replacing $y=x^8$ with a tower of variables, $w_1=x^2, w_2=w_1^2, y=w_2^2$, could result in an accumulation of errors.  We expect that developing a version of these cuts that can better handle dimension increase and numerical instability will provide a powerful alternative to complement existing methods. 

\begin{table}[]
\centering
\caption{Comparison of intersection cuts and RLT+SDP on polynomial optimization instances.} \label{table:polyopt}
\begin{tabular}{|l|rr|rr|}
\hline
Instance  &\multicolumn{2}{c|}{RLT+SDP} &  \multicolumn{2}{c|}{Intersection Cuts} \\ \cline{2-5}
Name  & Gap Closed & Time & Gap Closed & Time \\ \hline
ex4\_1\_8 & 100.00\% & 0.01 & 100.00\% & 0.03 \\
ex4\_1\_9 & 0.00\%   & 0.02  & 16.59\%  & 0.02 \\
ex7\_3\_1 & 0.00\%   & 0.05  & 0.00\%   & 0.55 \\
ex7\_3\_2 & 0.00\%   & 0.01  & 0.00\%   & 0.01 \\
ex8\_1\_4 & 100.00\% & 0.01  & 100.00\% & 0.04 \\
ex8\_1\_5 & 98.72\%  & 0.02  & 98.90\%  & 0.29 \\
ex8\_1\_7 & 55.79\%  & 0.03  & 66.54\%  & 180.50 \\
ex8\_4\_2 & 90.73\%  & 8.23  & 0.00\%  & 0.43 \\
st\_e03   & 31.64\%  & 0.31  & 81.52\%  & 4.52 \\
st\_e10   & 100.00\% & 0.01  & 100.00\% & 0.07 \\
st\_e19   & 97.47\%  & 0.01  & 99.13\%  & 6.45 \\\hline
\end{tabular}
\end{table}

\section{Conclusions}
\label{sec:conc}
We have introduced cuts for the generic set $S\cap P$, where for the closed set $S$ there is an oracle that provides the distance from a point to the nearest point in $S$. We have shown that the oracle can be used to construct a convergent cutting plane algorithm that can produce arbitrarily close approximations to $\mbox{conv}(S\cap P)$ in finite time. This algorithm relies on a (potentially) computationally expensive cut generation procedure, and so we have also considered a simple oracle-based intersection cut that can be easily computed. We provide applications of this intersection cut on polynomial optimization problems as well as a cardinality-constrained problem.  Furthermore, we provide a generic strengthening procedure of the intersection cut that uses the recession cone of an $S$-free set.

We have also introduced intersection cuts in the context of polynomial optimization. Accordingly, we have developed an $S$-free approach for polynomial optimization, where $S$ is the set of real, symmetric outer products. Our results on full-dimensional maximal OPF sets include a full characterization of such sets when $n_r=2$ as well as extensive families of maximal OPF sets. We derived intersection cuts from these families of maximal outer-product-free sets, including a strengthening procedure that determines negative step lengths in the case of intersections at infinity. 

Computational experiments have demonstrated the potential of our cuts as a fast way to reduce optimality gaps on a variety of polynomial optimization problems. We note that, although such experiments contrast our cuts with SDP and V2, the methods are in fact complementary. For instance, SDP can be used to warm-start outer-approximation cuts, and our cuts can, in turn, be added back to strengthen SDP (this also holds true for V2). A full implementation is being considered for future empirical work, incorporating the cuts into a branch-and-cut solver and developing a more sophisticated implementation, e.g. stronger initial relaxations with problem-specific valid inequalities, exploiting sparsity, advanced cut management, improved scalability, among others.

\subsubsection*{Acknowledgements}
The authors thank Eli Towle for pointing out an error in the presentation of the intersection cut strengthening procedure, Felipe Serrano for useful comments and suggestions that led to \Cref{lemma:22always}, and to the anonymous reviewers whose thorough feedback greatly improved the article. The authors would also like to thank the Institute for Data Valorization (IVADO) for their support through the IVADO Postdoctoral Fellowship program.

\bibliography{references} 

\begin{thebibliography}{10}
\providecommand{\url}[1]{{#1}}
\providecommand{\urlprefix}{URL }
\expandafter\ifx\csname urlstyle\endcsname\relax
  \providecommand{\doi}[1]{DOI~\discretionary{}{}{}#1}\else
  \providecommand{\doi}{DOI~\discretionary{}{}{}\begingroup
  \urlstyle{rm}\Url}\fi

\bibitem{andersen2013intersection}
Andersen, K., Jensen, A.N.: Intersection cuts for mixed integer conic quadratic
  sets.
\newblock In: M.~Goemans, J.~Correa (eds.) Integer Programming And
  Combinatorial Optimization, pp. 37--48. Springer (2013)

\bibitem{andersen2010analysis}
Andersen, K., Louveaux, Q., Weismantel, R.: An analysis of mixed integer linear
  sets based on lattice point free convex sets.
\newblock Mathematics of Operations Research \textbf{35}(1), 233--256 (2010)

\bibitem{andersen2007inequalities}
Andersen, K., Louveaux, Q., Weismantel, R., Wolsey, L.A.: Inequalities from two
  rows of a simplex tableau.
\newblock In: M.~Fischetti, D.P. Williamson (eds.) Integer Programming and
  Combinatorial Optimization, pp. 1--15. Springer (2007)

\bibitem{anstreicher2009semidefinite}
Anstreicher, K.M.: Semidefinite programming versus the
  reformulation-linearization technique for nonconvex quadratically constrained
  quadratic programming.
\newblock Journal of Global Optimization \textbf{43}(2-3), 471--484 (2009)

\bibitem{atamturk2010conic}
Atamt{\"u}rk, A., Narayanan, V.: Conic mixed-integer rounding cuts.
\newblock Mathematical Programming \textbf{122}(1), 1--20 (2010)

\bibitem{audet2000branch}
Audet, C., Hansen, P., Jaumard, B., Savard, G.: A branch and cut algorithm for
  nonconvex quadratically constrained quadratic programming.
\newblock Mathematical Programming \textbf{87}(1), 131--152 (2000)

\bibitem{averkov2011finite}
Averkov, G.: On finite generation and infinite convergence of generalized
  closures from the theory of cutting planes.
\newblock arXiv preprint arXiv:1106.1526  (2011)

\bibitem{balas1971intersection}
Balas, E.: Intersection cuts---a new type of cutting planes for integer
  programming.
\newblock Operations Research \textbf{19}(1), 19--39 (1971)

\bibitem{balas1998disjunctive}
Balas, E.: Disjunctive programming: Properties of the convex hull of feasible
  points.
\newblock Discrete Applied Mathematics \textbf{89}(1-3), 3--44 (1998)

\bibitem{balas2008optimizing}
Balas, E., Saxena, A.: Optimizing over the split closure.
\newblock Mathematical Programming \textbf{113}(2), 219--240 (2008)

\bibitem{bao2009multiterm}
Bao, X., Sahinidis, N.V., Tawarmalani, M.: Multiterm polyhedral relaxations for
  nonconvex, quadratically constrained quadratic programs.
\newblock Optimization Methods \& Software \textbf{24}(4-5), 485--504 (2009)

\bibitem{basu2010maximal}
Basu, A., Conforti, M., Cornu{\'e}jols, G., Zambelli, G.: Maximal lattice-free
  convex sets in linear subspaces.
\newblock Mathematics of Operations Research \textbf{35}(3), 704--720 (2010)

\bibitem{basu2010minimal}
Basu, A., Conforti, M., Cornu{\'e}jols, G., Zambelli, G.: Minimal inequalities
  for an infinite relaxation of integer programs.
\newblock SIAM Journal on Discrete Mathematics \textbf{24}(1), 158--168 (2010)

\bibitem{basu2011convex}
Basu, A., Cornu{\'e}jols, G., Zambelli, G.: Convex sets and minimal sublinear
  functions.
\newblock Journal of Convex Analysis \textbf{18}(2), 427--432 (2011)

\bibitem{Belotti2}
Belotti, P., G{\'o}ez, J.C., P{\'o}lik, I., Ralphs, T.K., Terlaky, T.: On
  families of quadratic surfaces having fixed intersections with two
  hyperplanes.
\newblock Discrete Applied Mathematics \textbf{161}(16-17), 2778--2793 (2013)

\bibitem{belotti2013mixed}
Belotti, P., Kirches, C., Leyffer, S., Linderoth, J., Luedtke, J., Mahajan, A.:
  {Mixed-integer nonlinear optimization}.
\newblock Acta Numerica \textbf{22}, 1--131 (2013)

\bibitem{benders1962partitioning}
Benders, J.F.: Partitioning procedures for solving mixed-variables programming
  problems.
\newblock Numerische Mathematik \textbf{4}(1), 238--252 (1962)

\bibitem{bienstock2014cutting}
Bienstock, D., Michalka, A.: Cutting-planes for optimization of convex
  functions over nonconvex sets.
\newblock SIAM Journal on Optimization \textbf{24}(2), 643--677 (2014)

\bibitem{bonamiglobally}
Bonami, P., G{\"u}nl{\"u}k, O., Linderoth, J.: Globally solving nonconvex
  quadratic programming problems with box constraints via integer programming
  methods.
\newblock Mathematical Programming Computation \textbf{10}(3), 333--382 (2018)

\bibitem{borozan2009minimal}
Borozan, V., Cornu{\'e}jols, G.: Minimal valid inequalities for integer
  constraints.
\newblock Mathematics of Operations Research \textbf{34}(3), 538--546 (2009)

\bibitem{Burer10}
Burer, S.: Optimizing a polyhedral-semidefinite relaxation of completely
  positive programs.
\newblock Mathematical Programming Computation \textbf{2}(1), 1--19 (2010)

\bibitem{burer2009globally}
Burer, S., Vandenbussche, D.: {Globally solving box-constrained nonconvex
  quadratic programs with semidefinite-based finite branch-and-bound}.
\newblock Computational Optimization and Applications \textbf{43}, 181--195
  (2009)

\bibitem{chen2012globally}
Chen, J., Burer, S.: Globally solving nonconvex quadratic programming problems
  via completely positive programming.
\newblock Mathematical Programming Computation \textbf{4}(1), 33--52 (2012)

\bibitem{chvatal1973edmonds}
Chv{\'a}tal, V.: Edmonds polytopes and a hierarchy of combinatorial problems.
\newblock Discrete Mathematics \textbf{4}(4), 305--337 (1973)

\bibitem{conforti2014cut}
Conforti, M., Cornu{\'e}jols, G., Daniilidis, A., Lemar{\'e}chal, C., Malick,
  J.: Cut-generating functions and {S}-free sets.
\newblock Mathematics of Operations Research \textbf{40}(2), 276--391 (2014)

\bibitem{conforti2010equivalence}
Conforti, M., Cornu{\'e}jols, G., Zambelli, G.: Equivalence between
  intersection cuts and the corner polyhedron.
\newblock Operations Research Letters \textbf{38}(3), 153--155 (2010)

\bibitem{cornuejols2013sufficiency}
Cornu{\'e}jols, G., Wolsey, L., Y{\i}ld{\i}z, S.: Sufficiency of cut-generating
  functions.
\newblock Mathematical Programming \textbf{152}(1-2), 643--651 (2015)

\bibitem{dadush2011split}
Dadush, D., Dey, S.S., Vielma, J.P.: The split closure of a strictly convex
  body.
\newblock Operations Research Letters \textbf{39}(2), 121--126 (2011)

\bibitem{dax2014low}
Dax, A.: Low-rank positive approximants of symmetric matrices.
\newblock Advances in Linear Algebra \& Matrix Theory \textbf{4}(3), 172--185
  (2014)

\bibitem{del2012convergence}
Del~Pia, A., Weismantel, R.: On convergence in mixed integer programming.
\newblock Mathematical Programming \textbf{135}(1-2), 397--412 (2012)

\bibitem{dey2008lifting}
Dey, S.S., Wolsey, L.A.: Lifting integer variables in minimal inequalities
  corresponding to lattice-free triangles.
\newblock In: A.~Lodi, A.~Panconesi, G.~Rinaldi (eds.) Integer Programming and
  Combinatorial Optimization, pp. 463--475. Springer (2008)

\bibitem{dey2010constrained}
Dey, S.S., Wolsey, L.A.: Constrained infinite group relaxations of {MIP}s.
\newblock SIAM Journal on Optimization \textbf{20}(6), 2890--2912 (2010)

\bibitem{eckart1936approximation}
Eckart, C., Young, G.: The approximation of one matrix by another of lower
  rank.
\newblock Psychometrika \textbf{1}(3), 211--218 (1936)

\bibitem{fischetti2017new}
Fischetti, M., Ljubi{\'c}, I., Monaci, M., Sinnl, M.: A new general-purpose
  algorithm for mixed-integer bilevel linear programs.
\newblock Operations Research \textbf{65}(6), 1615--1637 (2017)

\bibitem{fischetti2007optimizing}
Fischetti, M., Lodi, A.: Optimizing over the first {C}hv{\'a}tal closure.
\newblock Mathematical Programming \textbf{110}(1), 3--20 (2007)

\bibitem{fischetti2010note}
Fischetti, M., Salvagnin, D., Zanette, A.: A note on the selection of
  {B}enders’ cuts.
\newblock Mathematical Programming \textbf{124}(1-2), 175--182 (2010)

\bibitem{floudas2013handbook}
Floudas, C.A., Pardalos, P.M., Adjiman, C., Esposito, W.R., G{\"u}m{\"u}s,
  Z.H., Harding, S.T., Klepeis, J.L., Meyer, C.A., Schweiger, C.A.: Handbook of
  test problems in local and global optimization, vol.~33.
\newblock Springer Science \& Business Media (2013)

\bibitem{freund1985complexity}
Freund, R.M., Orlin, J.B.: On the complexity of four polyhedral set containment
  problems.
\newblock Mathematical Programming \textbf{33}(2), 139--145 (1985)

\bibitem{ghaddar2011dynamic}
Ghaddar, B., Vera, J.C., Anjos, M.F.: A dynamic inequality generation scheme
  for polynomial programming.
\newblock Mathematical Programming \textbf{156}(1-2), 21--57 (2016)

\bibitem{glover1974polyhedral}
Glover, F.: Polyhedral convexity cuts and negative edge extensions.
\newblock Zeitschrift f{\"u}r Operations Research \textbf{18}(5), 181--186
  (1974)

\bibitem{gomory1958outline}
Gomory, R.E.: Outline of an algorithm for integer solutions to linear programs.
\newblock Bulletin of the American Mathematical Society \textbf{64}(5),
  275--278 (1958)

\bibitem{gomory1963algorithm}
Gomory, R.E.: An algorithm for integer solutions to linear programs.
\newblock In: R.L. Graves, P.~Wolfe (eds.) Recent Advances in Mathematical
  Programming, pp. 269--302. McGraw-Hill (1963)

\bibitem{gomory1972some}
Gomory, R.E., Johnson, E.L.: Some continuous functions related to corner
  polyhedra.
\newblock Mathematical Programming \textbf{3}(1), 23--85 (1972)

\bibitem{grotschel1981ellipsoid}
Gr{\"o}tschel, M., Lov{\'a}sz, L., Schrijver, A.: The ellipsoid method and its
  consequences in combinatorial optimization.
\newblock Combinatorica \textbf{1}(2), 169--197 (1981)

\bibitem{eigenweb}
Guennebaud, G., Jacob, B., et~al.: Eigen v3.
\newblock http://eigen.tuxfamily.org (2010)

\bibitem{higham1988computing}
Higham, N.J.: Computing a nearest symmetric positive semidefinite matrix.
\newblock Linear Algebra and its Applications \textbf{103}, 103--118 (1988)

\bibitem{hillestad1980reverse}
Hillestad, R.J., Jacobsen, S.E.: Reverse convex programming.
\newblock Applied Mathematics and Optimization \textbf{6}(1), 63--78 (1980)

\bibitem{hiriart2012fundamentals}
Hiriart-Urruty, J.B., Lemar{\'e}chal, C.: Fundamentals of convex analysis.
\newblock Springer Science \& Business Media (2012)

\bibitem{kelley1960cutting}
{Kelley, Jr.}, J.E.: {The cutting-plane method for solving convex programs}.
\newblock Journal of the Society for Industrial \& Applied Mathematics
  \textbf{8}(4), 703--712 (1960)

\bibitem{kilincc2014minimal}
K{\i}l{\i}n{\c{c}}-Karzan, F.: On minimal valid inequalities for mixed integer
  conic programs.
\newblock Mathematics of Operations Research \textbf{41}(2), 477--510 (2015)

\bibitem{kocuk2017matrix}
Kocuk, B., Dey, S.S., Sun, X.A.: Matrix minor reformulation and {SOCP}-based
  spatial branch-and-cut method for the {AC} optimal power flow problem.
\newblock Mathematical Programming Computation \textbf{10}(4), 557--596 (2018)

\bibitem{konno2009choosing}
Konno, H., Yamamoto, R.: Choosing the best set of variables in regression
  analysis using integer programming.
\newblock Journal of Global Optimization \textbf{44}(2), 273--282 (2009)

\bibitem{krishnan2006unifying}
Krishnan, K., Mitchell, J.E.: A unifying framework for several cutting plane
  methods for semidefinite programming.
\newblock Optimization Methods and Software \textbf{21}(1), 57--74 (2006)

\bibitem{lasserre2001global}
Lasserre, J.B.: Global optimization with polynomials and the problem of
  moments.
\newblock SIAM Journal on Optimization \textbf{11}(3), 796--817 (2001)

\bibitem{laurent2009sums}
Laurent, M.: Sums of squares, moment matrices and optimization over
  polynomials.
\newblock In: Emerging Applications of Algebraic Geometry, pp. 157--270.
  Springer (2009)

\bibitem{li2008cook}
Li, Y., Richard, J.P.P.: Cook, {K}annan and {S}chrijver's example revisited.
\newblock Discrete Optimization \textbf{5}(4), 724--734 (2008)

\bibitem{locatelli2013convex}
Locatelli, M., Schoen, F.: On convex envelopes for bivariate functions over
  polytopes.
\newblock Mathematical Programming \textbf{144}(1-2), 65--91 (2014)

\bibitem{locatelli2000finite}
Locatelli, M., Thoai, N.V.: Finite exact branch-and-bound algorithms for
  concave minimization over polytopes.
\newblock Journal of Global Optimization \textbf{18}(2), 107--128 (2000)

\bibitem{lovasz1989geometry}
Lov{\'a}sz, L.: Geometry of numbers and integer programming.
\newblock Mathematical Programming: Recent Developments and Applications pp.
  177--210 (1989)

\bibitem{lovasz1991cones}
Lov\'{a}sz, L., Schrijver, A.: {Cones of matrices and set-functions and 0-1
  optimization}.
\newblock SIAM Journal on Optimization \textbf{1}(2), 166--190 (1991)

\bibitem{luedtke2012some}
Luedtke, J., Namazifar, M., Linderoth, J.: Some results on the strength of
  relaxations of multilinear functions.
\newblock Mathematical Programming \textbf{136}(2), 325--351 (2012)

\bibitem{marchand2001aggregation}
Marchand, H., Wolsey, L.A.: Aggregation and mixed integer rounding to solve
  {MIP}s.
\newblock Operations Research \textbf{49}(3), 363--371 (2001)

\bibitem{mccormick1976computability}
McCormick, G.P.: Computability of global solutions to factorable nonconvex
  programs: Part {I} -- {C}onvex underestimating problems.
\newblock Mathematical Programming \textbf{10}(1), 147--175 (1976)

\bibitem{globalliburl}
Meeraus, A.: GLOBALLib.
\newblock \url{http://www.gamsworld.org/global/globallib.htm}

\bibitem{mirsky1960symmetric}
Mirsky, L.: Symmetric gauge functions and unitarily invariant norms.
\newblock The Quarterly Journal of Mathematics \textbf{11}(1), 50--59 (1960)

\bibitem{misener2012global}
Misener, R., Floudas, C.A.: Global optimization of mixed-integer
  quadratically-constrained quadratic programs ({MIQCQP}) through
  piecewise-linear and edge-concave relaxations.
\newblock Mathematical Programming \textbf{136}(1), 155--182 (2012)

\bibitem{misener2015dynamically}
Misener, R., Smadbeck, J.B., Floudas, C.A.: Dynamically generated cutting
  planes for mixed-integer quadratically constrained quadratic programs and
  their incorporation into {G}lo{MIQO} 2.
\newblock Optimization Methods and Software \textbf{30}(1), 215--249 (2015)

\bibitem{modaresi2015split}
Modaresi, S., K{\i}l{\i}n{\c{c}}, M.R., Vielma, J.P.: Split cuts and extended
  formulations for mixed integer conic quadratic programming.
\newblock Operations Research Letters \textbf{43}(1), 10--15 (2015)

\bibitem{modaresi2015intersection}
Modaresi, S., K{\i}l{\i}n{\c{c}}, M.R., Vielma, J.P.: Intersection cuts for
  nonlinear integer programming: Convexification techniques for structured
  sets.
\newblock Mathematical Programming \textbf{155}(1-2), 575--611 (2016)

\bibitem{mosek}
{MOSEK ApS}: The MOSEK Fusion API for C++ 8.1.0.63 (2018).
\newblock \urlprefix\url{https://docs.mosek.com/8.1/cxxfusion/index.html}

\bibitem{padberg1991branch}
Padberg, M., Rinaldi, G.: A branch-and-cut algorithm for the resolution of
  large-scale symmetric traveling salesman problems.
\newblock SIAM Review \textbf{33}(1), 60--100 (1991)

\bibitem{porembski2002cone}
Porembski, M.: Cone adaptation strategies for a finite and exact cutting plane
  algorithm for concave minimization.
\newblock Journal of Global Optimization \textbf{24}(1), 89--107 (2002)

\bibitem{qualizza2012linear}
Qualizza, A., Belotti, P., Margot, F.: {Linear programming relaxations of
  quadratically constrained quadratic programs}.
\newblock Mixed Integer Nonlinear Programming pp. 407--426 (2012)

\bibitem{rikun1997convex}
Rikun, A.D.: A convex envelope formula for multilinear functions.
\newblock Journal of Global Optimization \textbf{10}(4), 425--437 (1997)

\bibitem{rockafellar1970convex}
Rockafellar, R.T.: Convex analysis, vol.~28.
\newblock Princeton University Press (1970)

\bibitem{saxena2010convex}
Saxena, A., Bonami, P., Lee, J.: {Convex relaxations of non-convex mixed
  integer quadratically constrained programs: extended formulations}.
\newblock Mathematical Programming \textbf{124}(1-2), 383--411 (2010)

\bibitem{saxena2011convex}
Saxena, A., Bonami, P., Lee, J.: {Convex relaxations of non-convex mixed
  integer quadratically constrained programs: projected formulations}.
\newblock Mathematical Programming \textbf{130}(2), 359--413 (2011)

\bibitem{schneider2014convex}
Schneider, R.: Convex bodies: the Brunn--Minkowski theory, second edn.
\newblock 151. Cambridge University Press (2014)

\bibitem{Schrijver86}
Schrijver, A.: Theory of Linear and Integer Programming.
\newblock John Wiley \& Sons, Chichester (1986)

\bibitem{sen1987nondifferentiable}
Sen, S., Sherali, H.D.: Nondifferentiable reverse convex programs and facetial
  convexity cuts via a disjunctive characterization.
\newblock Mathematical Programming \textbf{37}(2), 169--183 (1987)

\bibitem{serrano2019intersection}
Serrano, F.: Intersection cuts for factorable {MINLP}.
\newblock In: A.~Lodi, V.~Nagarajan (eds.) Integer Programming and
  Combinatorial Optimization, pp. 385--398. Springer International Publishing
  (2019)

\bibitem{sherali2002enhancing}
Sherali, H.D., Fraticelli, B.M.P.: {Enhancing RLT relaxations via a new class
  of semidefinite cuts}.
\newblock Journal of Global Optimization \textbf{22}(1-4), 233--261 (2002)

\bibitem{shor_quadratic_1987}
Shor, N.Z.: Quadratic optimization problems.
\newblock Soviet Journal of Computer and Systems Sciences \textbf{25}, 1--11
  (1987)

\bibitem{tardella2008existence}
Tardella, F.: {Existence and sum decomposition of vertex polyhedral convex
  envelopes}.
\newblock Optimization Letters \textbf{2}(3), 363--375 (2008)

\bibitem{tawarmalani2013explicit}
Tawarmalani, M., Richard, J.P.P., Xiong, C.: Explicit convex and concave
  envelopes through polyhedral subdivisions.
\newblock Mathematical Programming \textbf{138}(1-2), 531--577 (2013)

\bibitem{tawarmalani2002convex}
Tawarmalani, M., Sahinidis, N.V.: {Convex extensions and envelopes of lower
  semi-continuous functions}.
\newblock Mathematical Programming \textbf{93}(2), 247--263 (2002)

\bibitem{tawarmalani2002convexification}
Tawarmalani, M., Sahinidis, N.V.: Convexification and global optimization in
  continuous and mixed-integer nonlinear programming: theory, algorithms,
  software, and applications, vol.~65.
\newblock Springer Science \& Business Media (2002)

\bibitem{tawarmalani2005polyhedral}
Tawarmalani, M., Sahinidis, N.V.: A polyhedral branch-and-cut approach to
  global optimization.
\newblock Mathematical Programming \textbf{103}(2), 225--249 (2005)

\bibitem{towle2019intersection}
Towle, E., Luedtke, J.: Intersection disjunctions for reverse convex sets.
\newblock arXiv preprint arXiv:1901.02112  (2019)

\bibitem{tuy1964}
Tuy, H.: Concave programming under linear constraints.
\newblock Soviet Mathematics \textbf{5}, 1437--1440 (1964)

\bibitem{VanNem05b}
Vandenbussche, D., Nemhauser, G.: A branch-and-cut algorithm for nonconvex
  quadratic programs with box constraints.
\newblock Mathematical Programming \textbf{102}(3), 559--575 (2005)

\bibitem{wolsey2014integer}
Wolsey, L.A., Nemhauser, G.L.: Integer and combinatorial optimization.
\newblock John Wiley \& Sons (2014)

\bibitem{xia2019globally}
Xia, W., Vera, J.C., Zuluaga, L.F.: Globally solving nonconvex quadratic
  programs via linear integer programming techniques.
\newblock INFORMS Journal on Computing  (2019)

\end{thebibliography}
\bibliographystyle{spmpsci}
\newpage
\appendix
\normalsize
\section*{Appendix}
\section{Radius of the Conic Hull of a Ball}
\label{sec:radius}
Suppose we have a ball of radius $r$ and with centre that is distance $m>r$ from the origin. We wish to determine the radius of the conic hull of the ball at a specific point along its axis.  Consider a 2-dimensional cross-section of the conic hull of the ball containing the axis; this is shown in \Cref{fig:coneball} in rectangular $(x,y)$ coordinates.  A line passing through the origin and tangent to the boundary of the ball in the nonnegative orthant may be written in the form $y=ax$ for some $a>0$; let $(\bar r, \bar m)$ be the point of intersection between line and ball.  At $(\bar r, \bar m)$ we have 
\begin{equation}
\label{eq:apprad1}
(a\bar r-m)^2+\bar r^2=r^2 \iff (1+a^2)\bar r^2-2am\bar r + m^2-r^2 = 0.
\end{equation}

Now \Cref{eq:apprad1} should only have one unique solution with respect to $\bar r$ since the line is tangent to the ball; thus the discriminant must be zero,

\begin{equation}
\label{eq:apprad3}
4a^2m^2-4(1+a^2)(m^2-r^2)=0 \implies a=\frac{\sqrt{m^2-r^2}}{r}.
\end{equation}
Solving \Cref{eq:apprad1} for $\bar r$ with \Cref{eq:apprad3},
\begin{alignat*}{2}
&\bar r \ &=&\ \frac{2am}{2(1+a^2)}, \\
& &=&\ \frac{r}{m}\sqrt{m^2-r^2},\\
&\bar m \ &=& \ a\bar r, \\
& &=& \ \frac{m^2-r^2}{m}.
\end{alignat*}
Hence at distance $d$ from the origin along the axis of the cone, the radius of the cone is $\frac{\bar r}{\bar m}d$, or
\begin{equation}
\label{eq:appradfinal}
\frac{r}{\sqrt{m^2-r^2}}d.
\end{equation}

\begin{figure}
  \centering
   \includegraphics[width=0.4\textwidth]{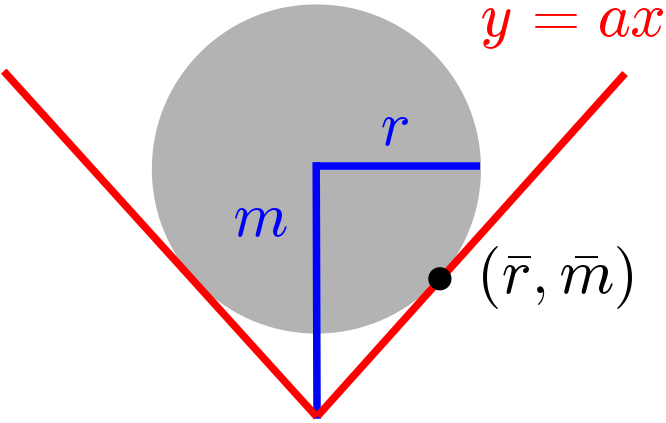}
   \caption{In grey, a ball with radius $r$ and distance $m>r$ from the origin. In red, the boundary of its conic hull. In black, an intersection point between the boundary of the ball and its conic hull.}
\label{fig:coneball}
\end{figure}

\section{Additional BoxQP Experiments} \label{appendix:boxqp}
\begin{center}
\begin{table}[h!]
\caption{Comparison of intersection cuts and wRLT+SDP on larger BoxQP instances.} \label{table:comparisonBoxQP_extended}
\scalebox{0.85}{
\begin{tabular}{|l|cc|cc|cc|}
\hline
Instance  &  \multicolumn{2}{c|}{Intersection Cuts (10 min)} & \multicolumn{2}{c|}{Intersection Cuts (1h)} & \multicolumn{2}{c|}{wRLT+SDP}     \\  \cline{2-7}
  Name   & Gap Closed & Time &  Gap Closed & Time & Gap Closed & Time \\ \hline
spar050-030-1 & 53.86\% & 608.8  & 75.60\% & 3604.97 & 19.12\% & 24.3   \\
spar050-030-2 & 39.73\% & 601.1  & 65.51\% & 3606.45 & 11.34\% & 26.9   \\
spar050-030-3 & 44.81\% & 919.5  & 71.64\% & 3604.77 & 10.72\% & 28.5   \\
spar050-040-1 & 69.61\% & 619.4  & 83.99\% & 3605.79 & 35.79\% & 28.7   \\
spar050-040-2 & 61.82\% & 600.8  & 79.89\% & 3656.72 & 29.33\% & 30.2   \\
spar050-040-3 & 78.07\% & 601.9  & 93.50\% & 3602.42 & 37.17\% & 25.9   \\
spar050-050-1 & 56.68\% & 636.8  & 65.82\% & 3602.11 & 27.05\% & 30.5   \\
spar050-050-2 & 65.38\% & 608.1  & 76.55\% & 3645.30 & 35.61\% & 29.7   \\
spar050-050-3 & 71.90\% & 604.2  & 83.73\% & 3605.17 & 44.44\% & 30.4   \\
spar060-020-1 & 9.99\%  & 663.6  & 21.45\% & 3618.78 & 5.86\%  & 73.2   \\
spar060-020-2 & 15.31\% & 620.3  & 36.21\% & 3696.31 & 9.54\%  & 77.5   \\
spar060-020-3 & 9.28\%  & 603.8  & 25.18\% & 3616.49 & 9.33\%  & 72.4   \\
spar070-025-1 & 18.84\% & 631.3  & 31.03\% & 4407.37 & 19.70\% & 177.7  \\
spar070-025-2 & 7.61\%  & 600.3  & 17.84\% & 3695.73 & 10.19\% & 186.2  \\
spar070-025-3 & 11.71\% & 635.5  & 32.77\% & 3682.20 & 14.89\% & 191.8  \\
spar070-050-1 & 50.38\% & 627.6  & 63.66\% & 3602.71 & 44.42\% & 225.7  \\
spar070-050-2 & 52.75\% & 617.6  & 66.25\% & 3626.46 & 42.50\% & 208.6  \\
spar070-050-3 & 57.12\% & 625.1  & 78.45\% & 3616.35 & 54.30\% & 177.5  \\
spar070-075-1 & 64.96\% & 630.3  & 77.09\% & 3639.95 & 59.15\% & 176.1  \\
spar070-075-2 & 61.18\% & 611.8  & 75.05\% & 3601.85 & 57.71\% & 179.6  \\
spar070-075-3 & 65.75\% & 614.4  & 77.71\% & 3601.76 & 58.13\% & 209.8  \\
spar080-025-1 & 7.96\%  & 629.1  & 22.92\% & 3857.10 & 14.06\% & 446.9  \\
spar080-025-2 & 5.38\%  & 608.9  & 16.23\% & 3725.68 & 14.00\% & 433.7  \\
spar080-025-3 & 7.21\%  & 631.7  & 23.95\% & 3956.51 & 18.85\% & 430.4  \\
spar080-050-1 & 42.52\% & 615.1  & 56.58\% & 3708.40 & 45.42\% & 463.3  \\
spar080-050-2 & 45.86\% & 640.7  & 62.28\% & 3614.02 & 50.72\% & 445.1  \\
spar080-050-3 & 47.64\% & 614.0  & 64.25\% & 3674.85 & 50.50\% & 480.6  \\
spar080-075-1 & 58.54\% & 617.4  & 75.07\% & 3690.96 & 63.84\% & 418.3  \\
spar080-075-2 & 61.64\% & 636.6  & 75.80\% & 3782.86 & 63.79\% & 441.6  \\
spar080-075-3 & 61.31\% & 631.5  & 71.86\% & 3650.25 & 63.13\% & 409.3  \\
spar090-025-1 & 0.77\%  & 644.5  & 14.72\% & 3707.69 & 21.99\% & 891.6  \\
spar090-025-2 & 0.79\%  & 662.3  & 13.76\% & 3745.97 & 21.14\% & 852.0  \\
spar090-025-3 & 0.82\%  & 614.5  & 13.13\% & 3667.08 & 20.52\% & 838.6  \\
spar090-050-1 & 22.17\% & 623.4  & 53.40\% & 3711.48 & 51.53\% & 1077.5 \\
spar090-050-2 & 21.42\% & 635.2  & 57.08\% & 3764.55 & 53.60\% & 975.0  \\
spar090-050-3 & 13.15\% & 649.8  & 58.00\% & 3625.29 & 53.84\% & 980.7  \\
spar090-075-1 & 39.52\% & 646.4  & 66.95\% & 3636.33 & 60.43\% & 908.7  \\
spar090-075-2 & 40.78\% & 618.9  & 66.43\% & 3622.32 & 60.31\% & 870.4  \\
spar090-075-3 & 44.28\% & 655.1  & 67.35\% & 3653.55 & 61.45\% & 968.6  \\
spar100-025-1 & 0.51\%  & 686.9  & 16.20\% & 3708.42 & 26.80\% & 1819.1 \\
spar100-025-2 & 0.40\%  & 605.3  & 12.03\% & 3731.99 & 22.98\% & 1762.3 \\
spar100-025-3 & 1.12\%  & 632.1  & 14.42\% & 3634.96 & 26.58\% & 1492.7 \\
spar100-050-1 & 6.81\%  & 655.3  & 47.61\% & 3716.91 & 50.20\% & 1621.7 \\
spar100-050-2 & 3.84\%  & 673.0  & 48.71\% & 3679.68 & 52.75\% & 1825.4 \\
spar100-050-3 & 8.13\%  & 736.4  & 49.07\% & 3641.59 & 52.57\% & 2066.3 \\
spar100-075-1 & 17.62\% & 732.6  & 67.72\% & 3714.62 & 63.28\% & 1681.3 \\
spar100-075-2 & 7.08\%  & 612.4  & 65.11\% & 3719.54 & 64.38\% & 1630.4 \\
spar100-075-3 & 22.71\% & 722.0  & 62.07\% & 3722.93 & 64.39\% & 1590.9 \\
spar125-025-1 & 0.69\%  & 744.3  & 0.84\%  & 3600.29 & 29.61\% & 3670.7 \\
spar125-025-2 & 0.53\%  & 687.7  & 0.73\%  & 3861.42 & 33.63\% & 3870.9 \\
spar125-025-3 & 0.36\%  & 1001.0 & 0.36\%  & 3718.06 & 33.50\% & 3863.5 \\
spar125-050-1 & 0.11\%  & 865.5  & 22.25\% & 3944.22 & 56.81\% & 3746.6 \\
spar125-050-2 & 0.11\%  & 738.9  & 21.90\% & 3605.57 & 58.53\% & 3762.0 \\
spar125-050-3 & 0.17\%  & 901.2  & 21.67\% & 3789.88 & 57.55\% & 3619.1 \\
spar125-075-1 & 0.00\%  & 631.4  & 45.45\% & 3927.23 & 67.61\% & 3824.3 \\
spar125-075-2 & 4.31\%  & 1209.0 & 36.61\% & 3773.20 & 64.92\% & 3861.4 \\
spar125-075-3 & 1.53\%  & 1224.0 & 40.29\% & 3657.00 & 65.55\% & 3673.8 \\ \hline
\end{tabular}}
\end{table}
\end{center}
\end{document}